\title{Dimension theory and degenerations of \\ de Jonqui\`eres divisors}
\author{Mara Ungureanu}
\date{}
\noindent\begin{footnotesize}\textsc{Albert-Ludwigs-Universit\"at Freiburg, Mathematisches Institut, Abteilung Reine Mathematik, Ernst-Zermelo-Str 1, 79104 Freiburg}
\newcommand{\grd}{g^r_d}
\newcommand{\grdop}[2]{g^{#1}_{#2}}
\DeclareMathOperator{\im}{\text{im}}
\DeclareMathOperator{\coker}{\text{coker}}
\DeclareMathOperator{\rk}{rk}
\DeclareMathOperator{\oo}{\mathcal{O}}
\DeclareMathOperator{\p}{\mathbb{P}}
\DeclareMathOperator{\pic}{\text{Pic}}
\DeclareMathOperator{\ord}{\text{ord}}
\newcommand{\Grd}{G^r_d(C)}
\newcommand{\Wrd}{W^r_d(C)}
\newcommand{\Crd}{C^r_d}
\DeclareMathOperator{\hilb}{\text{Hilb}_{d,g,r}}
\newtheorem{thm}{Theorem}[section] 
\newtheorem{lemma}[thm]{Lemma}
\newtheorem{prop}[thm]{Proposition}
\newtheorem{cor}[thm]{Corollary}
\theoremstyle{definition}
\newtheorem{defi}{Definition}[section]
\theoremstyle{definition}
\theoremstyle{remark}
\newtheorem{rem}{Remark}[section]
\begin{document}

\maketitle

\begin{abstract}
This paper aims at settling the issue of the validity of the de Jonquières formulas.  
We consider the space of divisors with prescribed multiplicity, or de Jonqui\`eres divisors, contained in a linear series on a smooth projective curve.  
Assuming zero expected dimension of this space,  the de Jonqui\`eres formulas compute the virtual number of de Jonqui\`eres divisors.  Using degenerations to nodal curves we show that for a general curve equipped with a general complete linear series, the space is of expected dimension, which shows that the counts are in fact true.  This implies that in the case of negative expected dimension a general linear series on a general curve does not admit de Jonqui\`eres divisors of the expected type. 
\end{abstract}

\section{Introduction}

In his 1866 memoir \autocite{deJo}, de Jonqui\`eres computed the number of divisors with prescribed multiplicities that are contained in a fixed linear series on a given plane algebraic curve.  

Almost a century later, and using modern techniques of topology and intersection theory of cycles on the symmetric product on a curve, MacDonald \autocite{Mac} and Mattuck \autocite{Mat}  recovered the original formula in characteristic zero and arbitrary characteristic, respectively, and generalised it to linear series of any dimension.  
However, their work does not address the vagueness of the classical statements, assuming either that the linear series in question is sufficiently generic, or that the multiplicities are counted correctly.
To address this issue, Vainsencher \autocite{Va} described the locus of divisors with prescribed multiplicities as the vanishing locus of a section of a bundle of the appropriate rank.  Using a natural filtration of this bundle, he computed its Chern classes without making use of the Grothendieck-Riemann-Roch theorem, and established the enumerative validity of the de Jonqui\`eres formula for plane curves and for some higher dimensional cases.

The aim of this paper is twofold. On the one hand, we settle the issue of the validity of the de Jonqui\`eres formula for linear series of arbitrary degree and dimension on a general curve by studying the geometry of the respective moduli space.  
On the other hand, we develop a theory of degenerations for de Jonquières divisors to nodal curves, which plays a central role as the main tool of in the study of the aforementioned moduli space.

%As usual with problems of this nature, we define an appropriate moduli space that describes the geometric situation and obtain an expected dimension, or more precisely, a lower bound for its dimension.  The questions that are usually addressed are the following:
%\begin{itemize}
%	\item Non-existence: if the expected dimension is negative, are there indeed no divisors with the corresponding prescribed multiplicity?
%	\item Existence: if the expected dimension is positive, is the moduli space indeed non-empty?
%	\item Dimension: is the dimension of the moduli space equal to the expected dimension? 	
%\end{itemize} 
%We shall see that in the case of zero-expected dimension, a positive answer to the last two questions is equivalent to the validity of the de Jonqui\`eres formula.  
Aside from being interesting objects in their own right, de Jonquières divisors and their degenerations are natural generalisations of the concept of strata of abelian differentials which were first introduced in the context of Teichm\"uller dynamics and flat surfaces - see the works of Masur \autocite{Mas} and Veech \autocite{Ve}, and more recently, of Bainbridge, Chen, Gendron, Grushevsky, and M\"oller \autocite{BCGGM}.  These strata are, however,  interesting objects also from the point of view of algebraic geometry, as can be seen in the work of Farkas and Pandharipande \autocite{FP}, Chen and Tarasca \autocite{CT}, or Mullane \autocite{Mu}.  In fact, the result of Polishchuk \autocite{Po} concerning the dimension of the strata in $\mathcal{M}_{g,n}$ provides an important clue towards the validity of the de Jonquières formulas.

In what follows we fix the notation and describe the objects of interest.
Let $C$ be a smooth projective curve of genus $g$ and denote by $C_d$ its $d$-th symmetric product.  Furthermore, let $\Grd$ parametrise linear series of type $\grd$, i.e.
\[\Grd:=\{ l=(L,V) \mid L\in\pic^d(C), V\in G(r+1,H^0(C,L)) \}.\]  
In this paper we focus on the case of Brill-Noether general curves, meaning that $\Grd$ is a smooth variety and its dimension is given by the Brill-Noether number
\begin{equation}\label{eq:brillnoether} 
\rho(g,r,d)=g-(r+1)(g-d+r)\geq 0. 
\end{equation}
Moreover it follows that the Hilbert scheme $\hilb$ parametrising curves in $\p^r$ of degree $d$ and (arithmetic) genus $g$ has a unique component $\mathcal{H}_{d,g,r}$, whose general point corresponds to a smooth curve and which maps dominantly onto $\mathcal{M}_g$.  Thus, when we talk about a general curve $C$ with a general linear series $l\in\Grd$ we refer to a general point 
$[C \xrightarrow{l} \p^r]\in\mathcal{H}_{d,g,r}$.

We now define the main object of interest in this paper:
for a fixed smooth curve $C$ of genus $g$ with a fixed linear series $l=(L,V)\in\Grd$,
a \textit{de Jonqui\`eres divisor of length} $N$ is a divisor $a_1 D_1 + \ldots + a_k D_k \in C_d$ such that
\[ a_1 D_1 + \ldots + a_k D_k \in\p V, \]
where $k\leq d$ is a positive integer and the $D_i$ are effective divisors of degree $d_i$ for $i=1,\ldots,k$ such that $N=\sum_{i=1}^k d_i$.  
If $l$ is complete (i.e.~such that $g-d+r\geq 0$), the definition of a de Jonqui\`eres divisor is equivalent to 
\[L\simeq\oo_C(a_1 D_1 + \ldots + a_k D_k).\]
Furthermore, if we let $\mu_1=(a_1,\ldots,a_k)$ and $\mu_2=(d_1,\ldots,d_k)$ be
two positive partitions such that $\sum_{i=1}^k a_i d_i = d$, then we denote the set of
de Jonqui\`eres divisors of length $N$ determined by $\mu_1$ and $\mu_2$ by $DJ_{k,N}^{r,d}(\mu_1,\mu_2,C,l)$.

In the particular case when $d_i=1$ for all $i=1,\ldots,k$, let $n:=N=k$ and the de Jonqui\`eres divisor is of the form
\[ a_1 p_1  + \ldots + a_n p_n,\]
for some distinct points $p_1,\ldots,p_n \in C$.
Here we simplify the notation to
\[DJ_{k,N}^{r,d}(\mu_1,\mu_2,C,l)=DJ^{r,d}_n(\mu_1,C,l).\]

It turns out (see Section \ref{sec:deglocus}) that the space $DJ_{k,N}^{r,d}(\mu_1,\mu_2,C,l)$ 
has the structure of a determinantal variety and its expected dimension (or, equivalently, lower bound for its dimension) is
\[ \text{exp}\dim DJ_{k,N}^{r,d}(\mu_1,\mu_2,C,l) = N-d+r.\]
The de Jonqui\`eres formula (cf.~\autocite{Mat} \S 5) states that, if we expect there to be a finite number of de Jonqui\`eres divisors of length $N$ (so if $N-d+r=0$), then this virtual number is given by the coefficient of the monomial $t_1^{d_1}\cdot\ldots\cdot t_k^{d_k}$ in
%\begin{equation}\label{eq:dejformula}
%(1 + a_1^2 t_1 + \ldots + a_n^2 t_n)^g(1 + a_1 t_1 + \ldots + a_n t_n)^{d-r-g}.
%\end{equation}  
%The corresponding number for generalised de Jonqui\`eres divisors was first computed by \autocite{Al} and is given by the coefficient of the monomial $t_1^{d_1}\cdot\ldots\cdot t_k^{d_k}$ in
\begin{equation}\label{eq:dejformula}
(1+a_1^2 t_1 + \ldots + a_k^2 t_k)^{g}(1+a_1 t_1 + \ldots + a_k t_k)^{d-r-g}.
\end{equation} 
Substituting $r=1$ and $d_1=\ldots=d_k=1$ in formula (\ref{eq:dejformula}) recovers the number of ramification points of a Hurwitz cover of $C$ obtained from the Pl\"ucker formula.  In addition, if $C$ is the plane cubic, then $g=1$, $d-r-g=1$ and we recover its 9 flex points.  Lastly, taking the linear series to be the canonical one, we recover the number of odd theta characteristics on a smooth general curve.  Hence we expect these counts to be true.  To settle the issue we must however study the space $DJ_{k,N}^{r,d}(\mu_1,\mu_2,C,l)$, establish whether it is empty when the expected dimension is negative, and when non-empty whether it is smooth, reduced, and of expected dimension.

Luckily, we are able to settle these questions in the affirmative.  In fact, the non-emptiness of the space of de Jonqui\`eres when the expected dimension is non-negative follows from an easy class computation, which we explain in Section \ref{sec:existencedej}.

The questions regarding the dimension of the space $DJ_{k,N}^{r,d}(\mu_1,\mu_2,C,l)$ and whether it is empty when the expected dimension is negative are less straightforward and require the degeneration techniques.  Using limit linear series on nodal curves of compact type, we prove
\begin{thm}[Dimension theorem]\label{dimension}
Fix a general curve $C$ of genus $g$ equipped with a general complete linear series $l\in\Grd$. If $N-d+r\geq 0$, the space $DJ_{k,N}^{r,d}(\mu_1,\mu_2,C,l)$ is of expected dimension,
\[\dim DJ_{k,N}^{r,d}(\mu_1,\mu_2,C,l) = N-d+r.\]
\end{thm}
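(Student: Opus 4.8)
The plan is to split the statement into a lower and an upper bound. Since Section~\ref{sec:deglocus} exhibits $DJ_{k,N}^{r,d}(\mu_1,\mu_2,C,l)$ as the locus where a map of vector bundles drops rank in the prescribed way, every non-empty component has dimension at least the expected value $N-d+r$, and the non-emptiness when $N-d+r\ge 0$ is supplied by the class computation of Section~\ref{sec:existencedej}. Thus the real content of the theorem is the reverse inequality
\[
\dim DJ_{k,N}^{r,d}(\mu_1,\mu_2,C,l)\le N-d+r,
\]
and the whole difficulty is concentrated here.

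For the upper bound I would argue by degeneration together with upper semicontinuity of fibre dimension. Concretely, I would place the general pair $(C,l)$ in a one-parameter family $\mathcal{C}\to B$ whose special fibre $C_0$ is a nodal curve of compact type carrying a refined limit $\grd$ in the sense of Eisenbud--Harris, and I would assemble the spaces $DJ_{k,N}^{r,d}$ of the fibres into a family over $B$. Because the fibre dimension of each component dominating $B$ can only jump up under specialisation, it suffices to produce a single such $C_0$ on which the space of limit de Jonqui\`eres divisors has dimension at most $N-d+r$: the generic fibre, being no larger than the special one, is then bounded by this value, and together with the determinantal lower bound this forces equality.

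The heart of the argument is therefore the dimension estimate on $C_0$, which is exactly what the degeneration theory for de Jonqui\`eres divisors is designed to control. I would take $C_0=C'\cup_p E$, with $C'$ a general curve of genus $g-1$ and $E$ a general elliptic tail attached at a single general point $p$, so as to run an induction on the genus. A limit de Jonqui\`eres divisor on $C_0$ distributes the effective divisors $D_i$ and their multiplicities $a_i$ between the two components, and the condition of lying in the linear series becomes a condition on each of the two aspects $(l_{C'},l_E)$ of the limit series, together with the matching of vanishing orders at the node $p$. Counting dimensions component by component, subtracting the constraints imposed at $p$ by the compatibility of the vanishing sequences, and feeding in the inductive hypothesis on $C'$ should make the total telescope to $N-d+r$; the base case is a low-genus computation (elliptic or rational) where the de Jonqui\`eres formula can be checked directly.

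The main obstacle, as I see it, is to show that the specialisation creates no excess limit de Jonqui\`eres divisors, that is, that the combinatorics of how the multiplicities $a_i$ split across the node---constrained by the Eisenbud--Harris ramification inequalities for the aspects at $p$---genuinely cuts the dimension down rather than leaving unexpected positive-dimensional families on $C_0$. This forces one to exploit the genericity of $l$ to guarantee that the limit series is refined and that the aspects meet the node as transversally as the theory permits, and then to verify that every admissible distribution of $(\mu_1,\mu_2)$ between $C'$ and $E$ contributes at most its expected share. Carrying out this bookkeeping uniformly over all such distributions is the delicate point; once it is in place, the semicontinuity argument closes the proof.
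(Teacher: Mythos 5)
Your overall framework is exactly the paper's: the determinantal structure of Section~\ref{sec:deglocus} gives the lower bound, the class computation of Section~\ref{sec:existencedej} gives non-emptiness, and the upper bound is obtained by specialising to a compact-type curve with a refined limit linear series and invoking upper semicontinuity of fibre dimension for the relative space of Proposition~\ref{prop:dejspacecomtype} (Remark~\ref{rem:fibre}). Your specific degeneration $C_0=C'\cup_p E$ with an elliptic tail is precisely the paper's Step~2 (Section~\ref{sec:step2}), where the aspects are $\grdop{r}{d-1}(p)$ on $C'$ and $\grdop{r}{r+1}((d-r-1)p)$ on $E$, and the dimension count on each component (with the incidence-correspondence argument showing that forcing the general point $p$ into the support cuts the dimension by one) telescopes to $N-d+r$.

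The genuine gap is your induction scheme and base case. Attaching an elliptic tail lowers the degree together with the genus, so each step replaces $(g,r,d)$ by $(g-1,r,d-1)$: the speciality index $s=g-d+r$ is unchanged while $\rho(g-1,r,d-1)=\rho(g,r,d)-1$. Your induction therefore terminates at $\rho=0$, not at a low-genus curve. At $\rho=0$ the step fails outright: a general curve of genus $g-1$ carries no $\grdop{r}{d-1}$ (since $\rho(g-1,r,d-1)=-1$), so the inductive hypothesis is inapplicable, and the existence/smoothability condition \eqref{eq:ramif} for the required ramification at $p$ reads $(r+1)s\leq g-1$, which fails exactly when $g=(r+1)s$. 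Consequently there is no ``elliptic or rational'' base case for complete special series. The paper fills this hole with a separate induction (Step~1, Section~\ref{sec:step1}): for $\rho=0$ one inducts on $s$, attaching to a general curve of genus $(s-1)(r+1)$ carrying a $\grdop{r}{g-s}$ a general curve of genus $r+1$ with its \emph{canonical} series $\grdop{r}{2r}$, the base case being the canonical series itself ($s=1$, arbitrary genus), where the transversality condition $H^0(C,K_C-D-D_1-\ldots-D_k)=0$ of Section~\ref{sec:nonspecial} holds automatically (alternatively by Polishchuk's theorem). Without this second induction your argument proves the theorem only for $\rho(g,r,d)\geq 1$ down to the unproven $\rho=0$ stratum, i.e.\ the base of the whole recursion is missing.
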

\noindent A direct consequence of the dimension theorem is the non-existence statement for complete linear series:
\begin{cor}
Let $C$ be a general curve equipped with a general complete linear series $l\in\Grd$.  If $N-d+r<0$, the variety $DJ_{k,N}^{r,d}(\mu_1,\mu_2,C,l)$ is empty.
\end{cor}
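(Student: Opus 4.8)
The plan is to obtain the corollary from the Dimension Theorem alone, through a refinement of the partition together with a general position statement, and to avoid any fresh degeneration beyond the one the theorem already supplies. Set $c:=d-r-N>0$. I would first assemble the universal de Jonqui\`eres variety $\pi_\tau\colon\mathcal{DJ}_\tau\to\mathcal{B}$ over the parameter space $\mathcal{B}$ of general pairs $(C,l)$ with $l$ a complete $\grd$ (a suitable open subset of $\mathcal{H}_{d,g,r}$), whose fibre over $[C\xrightarrow{l}\p^r]$ is precisely $DJ_{k,N}^{r,d}(\mu_1,\mu_2,C,l)$. Emptiness of the general fibre is then equivalent to non-dominance of $\pi_\tau$, i.e.\ to the bound $\dim\mathcal{DJ}_\tau<\dim\mathcal{B}$, and this is what I aim to prove.

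Since $c>0$ and $r\geq 0$ we have $N<d$, so some multiplicity $a_i\geq 2$, and I can refine the type. Splitting off one reduced point of multiplicity at a time, $c$ times, I pass from $\tau=(\mu_1,\mu_2)$ to a refined type $\tau^\ast$ of length $N^\ast=N+c=d-r$, so that $N^\ast-d+r=0$: concretely a summand $a_iD_i$ with $D_i=p+D_i'$ is rewritten as $(a_i-1)\,p+1\cdot p+a_iD_i'$. At the level of the underlying degree-$d$ divisor $E=a_1D_1+\ldots+a_kD_k$ this rewriting changes nothing, so the underlying-divisor maps of $\mathcal{DJ}_\tau$ and $\mathcal{DJ}_{\tau^\ast}$ into the relative $d$-th symmetric product $\mathcal{C}_d\to\mathcal{B}$ produce a finite correspondence realising $\mathcal{DJ}_\tau$ inside $\mathcal{DJ}_{\tau^\ast}$, with image contained in the closed coincidence locus $Z\subseteq\mathcal{DJ}_{\tau^\ast}$ where the $c$ split-off points are glued back to their parents. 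As the correspondence is finite, $\dim\mathcal{DJ}_\tau=\dim Z$.

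Now I apply the theorem to $\tau^\ast$, which is legitimate because $N^\ast-d+r=0\geq 0$. Combined with the non-emptiness for non-negative expected dimension (the class computation recalled above), the Dimension Theorem gives that $\pi_{\tau^\ast}$ is dominant with $0$-dimensional general fibres, whence $\dim\mathcal{DJ}_{\tau^\ast}=\dim\mathcal{B}$. If $Z$ is a \emph{proper} closed subvariety of $\mathcal{DJ}_{\tau^\ast}$, then $\dim\mathcal{DJ}_\tau=\dim Z<\dim\mathcal{B}$, so $\pi_\tau$ is not dominant and the general fibre $DJ_{k,N}^{r,d}(\mu_1,\mu_2,C,l)$ is empty, which is the assertion of the corollary.

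Everything therefore reduces to showing that $Z$ is proper, and this is the step I expect to be the main obstacle. Properness of $Z$ is equivalent to the general position statement that, for general $(C,l)$, not every one of the finitely many type-$\tau^\ast$ de Jonqui\`eres divisors carries the prescribed coincidences among its parts; equivalently, the general such divisor has its defining points pairwise distinct, so that strictly refining the partition strictly drops the dimension. I would establish this with the limit linear series machinery already developed for the Dimension Theorem: on a general compact-type degeneration the finitely many limiting de Jonqui\`eres divisors of length $d-r$ are supported at distinct smooth points of the components, and distinctness, being an open condition, propagates to the general smooth curve. This reducedness and general position of the length-$(d-r)$ configurations is precisely what makes the de Jonqui\`eres count enumeratively honest, and it is the crux of the argument.
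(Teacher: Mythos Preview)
Your route is a detour around an open door. In the paper the corollary is not deduced from the \emph{statement} of the Dimension Theorem but from its proof: the degeneration argument in Section~\ref{sec:prooflls} establishes, by induction and upper semicontinuity of fibre dimension, the inequality
\[
\dim DJ_{k,N}^{r,d}(\mu_1,\mu_2,C,l)\;\leq\;N-d+r
\]
for a general pair $(C,l)$, and nowhere in that part of the argument is the hypothesis $N-d+r\geq 0$ used. That hypothesis enters only through Lemma~\ref{lemma:dimbound}, which supplies the matching lower bound. When $N-d+r<0$ the upper bound already says the space is empty; nothing further is needed.

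Your refinement argument, by contrast, genuinely requires the ``properness of $Z$'' step, and this is where it breaks. First, you need not merely $Z\subsetneq\mathcal{DJ}_{\tau^\ast}$ but that $Z$ contains no component of $\mathcal{DJ}_{\tau^\ast}$ dominating $\mathcal{B}$; you have no irreducibility statement for $\mathcal{DJ}_{\tau^\ast}$ to fall back on. Second, the claim you need---that for general $(C,l)$ at least one de Jonqui\`eres divisor of length $d-r$ has its constituent points in general position---is \emph{not} the reducedness of Corollary~\ref{cor:counts} (which concerns the scheme structure on the zero-dimensional space $DJ(\tau^\ast)$, not coincidences among points in the support of its members). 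Establishing that general-position statement directly would send you back into the limit linear series computation; but once you are there, you may as well read off the upper bound $\dim\leq N-d+r$ and be done, as the paper does. Attempting instead to derive it from the Dimension Theorem applied to the coarser type $\tau$ is circular: that case has negative expected dimension and is exactly what you are trying to prove.
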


The validity of de Jonquières' counts is a direct consequence of Theorem \ref{dimension}, and of the determinantal variety structure of the space of de Jonquières divisors.  The latter implies that $DJ_{k,N}^{r,d}(\mu_1,\mu_2,C,l)$ is in fact a Cohen-Macaulay variety (Proposition 4.1, Chapter II,  \autocite{ACGH}).  As such, if it is zero-dimensional, it consists of a finite number of discrete closed points. This yields

\begin{cor}\label{cor:counts}
Let $C$ be a general curve equipped with a general complete linear series $l\in\Grd$.  If $N-d+r=0$, the variety $DJ_{k,N}^{r,d}(\mu_1,\mu_2,C,l)$ is a finite collection of reduced points.
\end{cor}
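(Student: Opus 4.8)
The plan is to derive Corollary~\ref{cor:counts} as an essentially formal consequence of Theorem~\ref{dimension} together with the determinantal structure of $DJ_{k,N}^{r,d}(\mu_1,\mu_2,C,l)$ recalled in the text. First I would invoke Theorem~\ref{dimension} in the boundary case $N-d+r=0$: it tells us precisely that for a general curve $C$ with a general complete linear series $l$, the space $DJ_{k,N}^{r,d}(\mu_1,\mu_2,C,l)$ has dimension $N-d+r=0$, hence is a finite collection of closed points. The content still to be established is that these points are \emph{reduced}, i.e.~that the scheme structure is generically (and here, everywhere, since it is zero-dimensional) reduced.

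\medskip

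The key tool is the Cohen-Macaulay property, which follows from the determinantal description of the space together with Proposition~4.1 of Chapter~II of \autocite{ACGH}: since $DJ_{k,N}^{r,d}(\mu_1,\mu_2,C,l)$ is cut out as the expected-codimension degeneracy locus of a map of vector bundles, and by Theorem~\ref{dimension} it meets its expected dimension $0$, the locus is Cohen-Macaulay. A zero-dimensional Cohen-Macaulay scheme is automatically of pure dimension $0$ with no embedded components, so each point carries a well-defined finite length. Thus I would argue that reducedness is equivalent to each such length being equal to $1$, which in turn is equivalent to the determinantal (or, equivalently, local defining) equations cutting out the locus transversally at each point.

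\medskip

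To verify this transversality, the natural route is once more through the degeneration to a nodal curve of compact type that underlies the proof of Theorem~\ref{dimension}. The strategy is to show that on a suitable limit linear series the de Jonqui\`eres locus consists of a number of reduced points equal to the value predicted by formula~(\ref{eq:dejformula}), and then to use upper semicontinuity of the length of fibres in the family: the total length over the special fibre is at least the generic length (with the correct multiplicities), while Theorem~\ref{dimension} guarantees the generic fibre has the expected dimension $0$. Since the expected count from (\ref{eq:dejformula}) is attained and the special-fibre points are reduced, the generic fibre cannot acquire extra length, forcing each of its points to be reduced as well. The main obstacle I anticipate is precisely controlling this length-balancing across the degeneration, that is, ruling out the possibility that the smooth generic fibre carries fewer points of higher multiplicity summing to the same total; this requires that the degeneration arguments from the proof of Theorem~\ref{dimension} track not only dimension but also multiplicity, which is most cleanly handled by observing that a zero-dimensional Cohen-Macaulay locus meeting the expected dimension is automatically generically reduced, so that the finitely many reduced points claimed follow directly without a separate multiplicity computation.
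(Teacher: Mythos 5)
Your first two paragraphs coincide with the paper's entire argument: Corollary \ref{cor:counts} is deduced there directly from Theorem \ref{dimension} together with the determinantal description of Section \ref{sec:deglocus}, invoking Proposition 4.1, Chapter II of \autocite{ACGH} to conclude that the degeneracy locus, having attained its expected dimension, is Cohen--Macaulay, hence --- being zero-dimensional --- a finite collection of discrete closed points. The paper stops exactly there (reading the degeneracy locus with its variety, i.e.\ reduced, structure); it performs no degeneration, no count against formula (\ref{eq:dejformula}), and no length-semicontinuity argument for this corollary. So the issue is entirely in your third paragraph, where you try to upgrade ``finite'' to ``reduced'' for the natural scheme structure.

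The closing claim on which your third paragraph ultimately rests --- that a zero-dimensional Cohen--Macaulay locus meeting its expected dimension is \emph{automatically} generically reduced --- is false. Take the map $x^2:\mathcal{O}_{\mathbb{A}^1}\rightarrow\mathcal{O}_{\mathbb{A}^1}$: its degeneracy locus is $\text{Spec}\,k[x]/(x^2)$, which attains the expected codimension $(1-0)(1-0)=1$ and is Cohen--Macaulay, yet is non-reduced. What Cohen--Macaulayness buys is Serre's condition $S_1$ (no embedded points), so that by Serre's criterion reducedness is \emph{equivalent} to generic reducedness $R_0$; but $R_0$ is precisely the multiplicity-one statement that must be proved, and for a zero-dimensional scheme every point is generic, so nothing comes for free. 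Your intermediate degeneration plan does not fill this hole: it \emph{assumes} that the special fibre consists of reduced points whose number equals the count (\ref{eq:dejformula}), which is just the statement to be proved one degeneration step down, so the length-balancing argument is circular as sketched. Note also that you cannot quote Theorem \ref{thm:smooth}, since it is stated only for $N-d+r>0$; multiplicity one at a point $D$ in the boundary case $N-d+r=0$ would have to come from a transversality statement such as condition (\ref{eq:transversefinal}), which is an additional argument, not a formal consequence of the Cohen--Macaulay property.
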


We address the issue of the smoothness of $DJ_{k,N}^{r,d}(\mu_1,\mu_2,C,l)$ by expressing the space as an intersection of subvarieties inside the symmetric product $C_d$ and obtaining a transversality condition from the study of the relevant tangent spaces.
\begin{thm}[Smoothness result]\label{thm:smooth}
Let $C$ be a smooth general curve of genus $g$.  Then for any complete linear series $l\in G^r_d(C)$, the space $DJ_{k,N}^{r,d}(\mu_1,\mu_2,C,l)$ is smooth, whenever $N-d+r>0$.
\end{thm}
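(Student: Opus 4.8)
The plan is to realize $DJ_{k,N}^{r,d}(\mu_1,\mu_2,C,l)$ as an intersection inside the symmetric product $C_d$ and to prove smoothness by a tangent space computation showing that the two factors meet transversally. Concretely, let $\phi\colon C_{d_1}\times\cdots\times C_{d_k}\to C_d$ be the weighted addition map $(D_1,\ldots,D_k)\mapsto E:=a_1D_1+\cdots+a_kD_k$, and let $W_\mu:=\im\phi$ be the associated multiplicity locus. Since $l$ is complete we have $\p V=|L|$, so a de Jonqui\`eres divisor is exactly a point of $W_\mu\cap|L|$, and it suffices to show that at every such $E$ one has $T_EW_\mu+T_E|L|=T_EC_d$; transversality then forces the intersection to be smooth of the expected dimension $N-d+r$.

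First I would write down the three tangent spaces using the standard sequence $0\to\oo_C\to\oo_C(E)\to\oo_E(E)\to0$. This identifies $T_EC_d\cong H^0(\oo_E(E))$ and exhibits $T_E|L|$ as the image of $H^0(L)=H^0(\oo_C(E))$, i.e.\ the kernel of the coboundary $\partial\colon H^0(\oo_E(E))\to H^1(\oo_C)$. The tangent space $T_EW_\mu$ is the image of $d\phi$; differentiating the ``multiply by $a_i$'' map $C_{d_i}\to C_{a_id_i}$ shows that deforming the points of $D_i$ contributes only the lowest order (simple pole) directions of $\oo_E(E)$ at the support of $D_i$, each scaled by the weight $a_i$ (reflecting the weights appearing in formula (\ref{eq:dejformula})). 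Since we work in characteristic zero the $a_i$ are invertible, so $d\phi$ is injective and $T_EW_\mu$ is the full ``simple pole'' subspace supported on $D^{(1)}:=D_1+\cdots+D_k$, a divisor of degree $N$.

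Next I would dualize the transversality condition via Serre duality $H^1(\oo_C)\cong H^0(K_C)^\vee$ and the residue pairing $\langle\xi,\omega\rangle=\sum_p\operatorname{Res}_p(\xi\omega)$. A differential $\omega\in H^0(K_C)$ gives a functional vanishing on $T_E|L|$; requiring it to vanish on $T_EW_\mu$ as well translates, by the simple pole computation, into $\omega$ vanishing along $D^{(1)}$. Thus transversality fails at $E$ exactly when there is $\omega\in H^0(K_C-D^{(1)})\setminus H^0(K_C-L)$, i.e.\ precisely when $h^0(\oo_C(D^{(1)}))>N-d+r+1$; equivalently, the residual divisor $F:=\sum_i(a_i-1)D_i$ fails to impose independent conditions on $|L|$. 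Since $DJ$ is determinantal, the reverse inequality $h^0(\oo_C(D^{(1)}))\ge N-d+r+1$ holds automatically at every point, so smoothness is equivalent to the equality $\dim T_EDJ=\dim|D^{(1)}|=N-d+r$.

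The hard part will be this final step: showing that on a general curve the Brill--Noether equality $h^0(\oo_C(D^{(1)}))=N-d+r+1$ holds at every de Jonqui\`eres divisor once $N-d+r>0$. I would control the locus of configurations for which $D^{(1)}$ acquires extra sections by a dimension count, bounding the family of tuples $(D_i)$ whose sum lies in $C^{N-d+r+1}_N$ while $E=\sum a_iD_i$ still spans a complete $\grd$. The strict inequality $N-d+r>0$ is what leaves enough room for this estimate, the borderline case $N-d+r=0$ being handled separately by the Cohen--Macaulay and reduced-points argument behind Corollary \ref{cor:counts}. Establishing that this bad locus is empty for the general curve---so that no de Jonqui\`eres divisor can occur at a point where $D^{(1)}$ is more special than expected---is the crux of the argument.
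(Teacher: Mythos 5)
Your setup is the same as the paper's Section \ref{sec:nonspecial}: there too the space is written as the intersection $\Sigma_{k,N}(\mu_1,\mu_2)\cap|D|$ inside $C_d$, with $T_EC_d=H^0(C,\oo_E(E))$, $T_E\Sigma_{k,N}=H^0(C,\oo_C(D_1+\cdots+D_k)/\oo_C)$, $T_E|D|$ the kernel of the coboundary, and transversality dualized through the residue pairing. Your reduced criterion --- smoothness at $E$ iff $h^0(\oo_C(D^{(1)}))=N-d+r+1$, equivalently $H^0(K_C-D^{(1)})=H^0(K_C-E)$, equivalently $F=\sum_i(a_i-1)D_i$ imposing independent conditions on $|L|$ --- is a legitimate form of the tangent-space condition (indeed $T_E\Sigma_{k,N}\cap T_E|D|\cong H^0(\oo_C(D^{(1)}))/\mathbb{C}$, so the tangent space to the intersection has dimension $h^0(\oo_C(D^{(1)}))-1$). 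Note, however, that it is not literally the paper's condition (\ref{eq:transversefinal}), which demands $H^0(K_C-D-D_1-\cdots-D_k)=0$, i.e.\ nonspecialty of the degree-$(d+N)$ divisor $D+\sum_iD_i$; it is this latter formulation, phrased as $r'=h^0(D+\sum_iD_i)-1=r+n'$ with $n'$ not too large, that the paper's Section \ref{sec:smooth} actually attacks.

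The genuine gap is that your final step, which you yourself flag as ``the crux,'' is the entire content of the theorem and is left unproved. Ruling out, on a general curve, de Jonqui\`eres divisors for which $D^{(1)}$ (or $D+D^{(1)}$) is more special than expected is itself a dimension statement about an auxiliary de Jonqui\`eres-type locus, so it cannot be settled by a naive dimension count of tuples $(D_i)$ with $D_1+\cdots+D_k\in C^{N-d+r+1}_N$ on the fixed curve: determinantal descriptions only give \emph{lower} bounds for such loci, and emptiness or expected dimension requires moving the curve in moduli, i.e.\ degeneration. The paper devotes all of Section \ref{sec:smooth} to exactly this: it degenerates to a flag curve via Proposition 2.2 of \autocite{Fa2}, so that the supports of the $D_i$ either coalesce (excluded by Pl\"ucker) or concentrate on a subcurve $Y$; it then compares the two limit linear series $\mathfrak{m}=\grd$ and $\mathfrak{l}=\grdop{r+n'}{d+N}$, extracts from their vanishing sequences a limit $\grdop{n'-1}{d}$ on $Y$ with ramification at the attachment point (Lemma 2.1 of \autocite{Fa2} together with Lemmas \ref{lemma:EH} and \ref{lemma:1original}), and contradicts the Eisenbud--Harris existence bound (Theorem 1.1 of \autocite{EH87}) separately in the regimes $N<g$ and $N\geq g$ --- and even this only covers $r\geq3$, $s=g-d+r\geq2$, the cases $r=1$, $r=2$, $s\leq1$ being handled by distinct ad hoc arguments in Section \ref{sec:nonspecial}. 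No substitute for this machinery appears in your sketch, and your uniform plan also hides that, e.g., for the canonical series your criterion reads $h^0(K_C-D^{(1)})=1$ and is not automatic. In short: the proposal sets up the transversality problem correctly, but stops precisely where the paper's proof begins.
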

\noindent The proof is also by degeneration to nodal curves and limit linear series, however this time using a strategy developed in \autocite{Fa2} .

Finally, we prove the non-existence result for non-complete linear series using a different degeneration technique, namely compactified Picard schemes for moduli of stable pointed curves.  We obtain
\begin{thm}[Non-existence statement]\label{cor:nonex}
Let $C$ be a general curve equipped with a general linear series $l\in\Grd$ satisfying $g-d+r<0$ and let $\mu$ be a positive partition of $d$ length $n$.  If $n-d+r<0$, the variety $DJ^{r,d}_n(\mu,C,l)$ is empty.
\end{thm}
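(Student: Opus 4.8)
The plan is to prove non-existence by degenerating to a reducible (nodal) curve where the de Jonqui\`eres condition becomes more rigid, and then applying a dimension count via upper semicontinuity. The key distinction from the complete case (Theorem \ref{dimension}) is that $l$ is \emph{incomplete}, so the equivalence $L\simeq\oo_C(a_1p_1+\ldots+a_np_n)$ no longer captures membership in $\p V$; instead one must track the vector subspace $V\subseteq H^0(C,L)$ through the degeneration. This is precisely why limit linear series on curves of compact type will not suffice here, and why the natural tool becomes the compactified Picard scheme (or the relative Picard functor) over the moduli of stable pointed curves, which keeps control of both the line bundle and a suitable limit of the sections.

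First I would set up a one-parameter family $\pi\colon\mathcal{C}\to B$ over a smooth curve $B$ (a disc or the spectrum of a DVR), with smooth general fiber $C_t$ carrying a general $\grd$ of the required incomplete type, and special fiber $C_0$ a chosen stable pointed curve — the most useful degeneration is likely a chain or a general nodal curve whose components force the support points $p_1,\ldots,p_n$ to distribute in a constrained way. Over this family I would take the compactified Picard scheme $\overline{\pic}^d(\mathcal{C}/B)$ and the relative symmetric product $\mathcal{C}_d/B$, and form the relative incidence locus $\mathcal{D}\subseteq\mathcal{C}_d$ whose fibers are the de Jonqui\`eres loci $DJ^{r,d}_n(\mu,C_t,l_t)$. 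Because $DJ^{r,d}_n$ is a determinantal variety (hence every component has dimension at least the expected one), the map $\mathcal{D}\to B$ is either empty over the general fiber or has fibers of dimension at least $n-d+r<0$, which is impossible; so emptiness of the special fiber would force emptiness generically. The entire argument therefore reduces to showing that the \emph{limit} de Jonqui\`eres locus on $C_0$ is empty.

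The core of the argument is thus the analysis on the central fiber $C_0$. I would argue that any putative family of de Jonqui\`eres divisors specializes to a configuration on $C_0$ that must, by the node-smoothing/compatibility conditions encoded in the compactified Picard scheme, satisfy the de Jonqui\`eres condition componentwise together with matching conditions at the nodes. On each component the incompleteness of $V$ translates into a genuinely overdetermined linear condition (the limit of $\p V$ meets the limit divisor locus), and a parameter count — balancing the dimension of allowable limit line bundles on $C_0$, the dimension of limits of $V$ in the relevant Grassmannian, and the $n$ degrees of freedom from the support points — should yield a strictly negative expected dimension that, unlike in the determinantal setting on a smooth curve, cannot be corrected upward because the limit locus sits inside a component of the boundary whose own dimension is strictly smaller. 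Concretely, I expect the negativity $n-d+r<0$ combined with $g-d+r<0$ to propagate to the special fiber as an inequality that no admissible limit object can satisfy, giving emptiness of $DJ^{r,d}_n(\mu,C_0,l_0)$.

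The main obstacle will be constructing the correct limit object for the incomplete series and proving the semicontinuity cleanly. Unlike complete linear series, where limit linear series \`a la Eisenbud–Harris provide a well-behaved degeneration theory, an incomplete $V$ does not canonically extend across the node, and its flat limit in the Grassmannian bundle may acquire base points or jump in an uncontrolled way; controlling this limit so that the de Jonqui\`eres condition is preserved (and does not accidentally become \emph{easier} to satisfy in the limit, which would break the upper-semicontinuity direction) is the delicate step. I would address this by working with the compactified Picard scheme so that the pair $(L,V)$ degenerates as a point of a proper moduli space, ensuring existence of a limit, and then by a careful local analysis near the nodes showing that the limit still imposes at least the expected number of conditions. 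Verifying that the boundary stratum carrying the limit locus has the expected small dimension — so that the determinantal lower bound is genuinely violated there — is where the real work lies, and is the heart of applying the technique of \autocite{Fa2}-type transversality in the compactified Picard setting.
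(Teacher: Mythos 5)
Your high-level skeleton does match the paper's: degenerate to a stable central fiber, use the Caporaso--Melo compactified Picard scheme $\overline{P}_{d,g,n}$ so that limits exist, prove emptiness on the central fiber, and conclude for the general fiber. But the two load-bearing steps are missing or wrong. First, your semicontinuity sentence is logically broken: when $n-d+r<0$ the determinantal lower bound is vacuous, since any nonempty fiber has dimension $\geq 0 > n-d+r$, so ``fibers of dimension at least $n-d+r<0$, which is impossible'' proves nothing. What is actually needed is that the de Jonqui\`eres locus is \emph{closed} in $\overline{P}_{d,g,n}$ (the paper's Proposition \ref{prop:closed}, proved by the valuative criterion: one extends the twist $T_t$ over $t=0$ using Raynaud's Lemma \ref{lemma:raynaud} to produce the Cartier divisor defining $T_0$ at the smoothed nodes), so that nonemptiness for general $t$ forces nonemptiness at $t=0$; the negativity of $n-d+r$ enters only in the fiberwise statements, never in the semicontinuity. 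Second, you have no base case and no induction structure: the paper's proof is an induction on the length, anchored by Lemma \ref{lemma:nonspecial} --- for $n<g$, emptiness follows from the elementary Abel--Jacobi count $\dim\im u\leq n<g=\dim\pic^d(C)$ together with the forgetful-map argument descending from complete to incomplete series. Your ``parameter count \ldots should yield a strictly negative expected dimension'' on the central fiber is exactly the part where all the content lies, and you leave it as a hope.

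Relatedly, your premise that one must ``track the vector subspace $V$ through the degeneration'' is the opposite of what the paper does, and of what $\overline{P}_{d,g,n}$ can do: Caporaso--Melo compactifies \emph{line bundles}, not pairs $(L,V)$, and there is no canonical limit of $V$ there. The paper sidesteps $V$ entirely: $\sum a_ip_i\in\p V$ implies $L\simeq\oo_C(\sum a_ip_i)$, Definition \ref{def:dejnodes} (existence of a twist $T$) is the degenerate form of this weaker line-bundle condition, and disproving the weaker condition on the central fiber suffices. The concrete central fiber is also essential: the paper takes $Y$ to be a general curve $C$ of genus $g-1$ joined \emph{at two points} to a chain of $n+1$ rational curves carrying the $n$ markings and one exceptional component --- note $Y$ is not of compact type (its dual graph has a cycle), which is precisely why limit linear series are unavailable here and the compactified Picard scheme is the right tool, not because of any issue with incomplete $V$. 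The balanced-multidegree constraints ($\deg L_C=d-1$, degree $0$ on marked rational components, $1$ on the exceptional one) turn the de Jonqui\`eres condition into a linear system for the twist whose every solution forces a de Jonqui\`eres divisor of length $1$ or $2$ on $C$ for the induced incomplete $\grdop{r}{d-1}$, which the induction hypothesis kills since $1,2<g-1$. Finally, you never address why emptiness on this special fiber propagates to a \emph{general} pair $(C,l)$: the paper needs that $Y$, embedded by $l$ in $\p^r$, smooths to a general point of $\mathcal{H}_{d,g,r}$ (Hartshorne--Hirschowitz/Sernesi-type arguments), and without such a smoothing statement your conclusion only applies to curves near your chosen degeneration.
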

\noindent These degenerations are only suitable for treating the case of de Jonqui\`eres divisors satisfying $d_1=\ldots=d_k=1$, as they rely on the fact that the points in the support of the divisor are distinct.

The paper is organised as follows.  In Section \ref{sec:geomdej} we collect some preliminary results about the space $DJ_{k,N}^{r,d}(\mu_1,\mu_2,C,l)$ which will form the basis for the arguments in the remainder of the paper.
In Section \ref{sec:deglocus} the space $DJ_{k,N}^{r,d}(\mu_1,\mu_2,C,l)$ is endowed with the structure of a determinantal subvariety of $C_d$, while in Section \ref{sec:existencedej}
an easy argument shows that the space $DJ_{k,N}^{r,d}(\mu_1,\mu_2,C,l)$ is indeed non-empty when the expected dimension is positive.
Furthemore, in Section \ref{sec:nonspecial} we express the space $DJ_{k,N}^{r,d}(\mu_1,\mu_2,C,l)$ as an intersection of subvarieties of the symmetric product $C_d$ and establish
the condition for this intersection to be transverse.

We then proceed in Section \ref{nodes} to describe degenerations of de Jonqui\`eres divisors for families of smooth curves with a nodal central fibre.  We approach the question from two perspectives:
\begin{enumerate}[wide, labelwidth=!, labelindent=0pt]
	\item In Section \ref{sec:lls} we look at the theory of limit linear series, as developed by Eisenbud and Harris in \autocite{EH86}, which applies to the case when the central fibre is a curve of compact type.  In particular, we address the question of what it means to say that a limit linear series admits a de Jonquières divisor.  In this context we use a very simple degeneration and an induction argument to prove Theorem \ref{dimension} in Section \ref{sec:prooflls} and Theorem \ref{thm:smooth} in Section \ref{sec:smooth}.
	\item In Section \ref{sec:compactified} we discuss compactifications of the Picard schemes over the moduli space of stable curves with marked points, following \autocite{Ca} and \autocite{Me}, which come into play for central fibres that are stable curves.  This approach represents our attempt at generalising the degenerations of abelian differentials, as they appear in \autocite{Ch} or \autocite{FP}.  Of course, what distinguishes our case from the case of differentials is that we do not have a readily available equivalent of the relative dualising sheaf on the family of curves.  
These degenerations have as direct practical application the proof by induction of Theorem \ref{cor:nonex} in Section \ref{sec:noncomplete}.
%\begin{thm}[Non-existence statement]\label{cor:nonex}
%Let $C$ be a general curve equipped with a general linear series $l\in\Grd$ satisfying $g-d+r<0$ and let $\mu$ be a positive partition of $d$ length $n$.  If $n-d+r<0$, the variety $DJ^{r,d}_n(\mu,C,l)$ is empty.
%\end{thm}
	\end{enumerate}  

We conclude the paper in Section \ref{sec:lorentz} with a discussion of the space de Jonquières divisors where the partition $\mu_1$ is allowed to have negative coefficients as well.
 
 %and show that in the case of non-special linear series and the canonical linear series the space of de Jonquières divisors is of expected dimension. 
%The expected dimension for the canonical case can also be obtained as a direct consequence of the result of Polishchuk \autocite{Po} concerning the dimension of the moduli space of effective $r
%$-spin structures. 

%\begin{itemize}
%	\item for Theorem \ref{dimension}: in the framework of Section \ref{nodes}, we construct a reducible nodal curve with a limit linear series  whose space of de Jonquières divisors has the expected dimension by making use of the dimension theorem in the known case of the canonical linear series on each of the irreducible components of the curve;
%	\item for Theorem \ref{cor:nonex}: in the framework of Section \ref{sec:compactified}, we construct a reducible nodal curve with a linear series not admitting de Jonquières divisors in the case of negative expected dimension, by making use of the non-existence statement for non-special linear series on some irreducible components of the curve.
%\end{itemize}
%The two theorems follow by a semicontinuity argument on families of smooth curves equipped with linear series degenerating to the nodal curves and suitable limits of linear series mentioned above.

\section*{Acknowledgements}
This paper is part of my PhD thesis.  I would like to thank my advisor Gavril Farkas for introducing me to this circle of ideas and for his suggestions to approaching this problem.  I am also grateful to Irfan Kad\i k\"oyl\"u and Nicola Pagani for useful discussions and to the referee for a careful reading of the paper and their recommendations for improvements.

\section{The geometry of the space of de Jonquières divisors}\label{sec:geomdej}
	In this section we extract as much information as possible about the geometry of the space $DJ_{k,N}^{r,d}(\mu_1,\mu_2,C,l)$ of de Jonquières divisors for a fixed curve $C$ equipped with a linear series $l=(L,V)$ of type $\grd$, without using any degeneration techniques. 
	
	We recall some standard definitions: for a smooth curve $C$, let $\Crd$ be the subvariety of $C_d$ parametrising effective divisors of degree $d$ on $C$ moving in a linear series of dimension at least $r$:  
\[ \Crd := \{ D\in C_d \mid \dim | D | \geq r \}, \]
and $\Wrd$ be the associated variety of complete linear series of degree $d$ and dimension at least $r$, i.e.
\[ \Wrd := \{ L\in\pic^d(C) \mid h^0(C,L)\geq r+1 \}\subseteq\pic^d(C).\]
Allowing the curve to vary in the moduli space $\mathcal{M}_g$ of curves of genus $g$, we denote by $\mathcal{W}^r_d$ the relative counterpart of $\Wrd$.
	\subsection{The space of de Jonquières divisors as degeneracy locus}\label{sec:deglocus}
\indent Fix an integer $k\leq d$ and two vectors of positive integers $\mu_1=(a_1,\ldots,a_k)$ and $\mu_2=(d_1,\ldots,d_k)$ such that $\sum_{i=1}^{k} a_i d_i =  d$.  The space $DJ_{k,N}^{r,d}(\mu_1,\mu_2,C,l)$ can be described as a degeneracy locus of vector bundles over $C_d$ as follows: the idea is that the condition 
\[ a_1 D_1 + \ldots + a_k D_k \in\p V \]
is equivalent to the condition that the natural restriction map
\[ V\rightarrow H^0(C,L/L(-a_1 D_1-\ldots-a_k D_k)) \]
has non-zero kernel.  
%These maps constitute a morphism $\phi:E\rightarrow F$ over $C_n$, 
To reformulate this globally in terms of a morphism of two vector bundles over $C_d$, let the first bundle $\mathcal{E}=\oo_{C_d}\otimes V$ be the trivial bundle.  As for the second bundle, consider the diagram 
 
 \begin{figure}[H]\centering
  \begin{tikzpicture}
    \matrix (m) [matrix of math nodes,row sep=2em,column sep=1em,minimum width=1em]
  {
     & C \times C_d & \supset \mathcal{U} \\
     C & & C_d\\};
  \path[-stealth]
    (m-1-2) edge node [auto,swap] {$\sigma$} (m-2-1)
            edge node [auto]{$\tau$}  (m-2-3);
 \end{tikzpicture}
 \end{figure}

% \begin{figure}[H]\centering
%\includegraphics{comm01.pdf}
% \end{figure}
  
\noindent where $\sigma$ and $\tau$ are the usual projections and $\mathcal{U}$ is the universal divisor defined as
\[\mathcal{U} = \{ (p,D) \mid D\in C_d \text{ and }p\in D \} \subset C\times C_d.\]
Alternatively, identifying $C_d$ with the Hilbert scheme $C^{[d]}$ of $d$ points on $C$, one defines $\mathcal{U}$ as the universal family $\mathcal{U}\subset C\times C^{[d]}$.
%We now give a definition that places this second bundle in a broader context:
For the second bundle, consider the sheaf:
\[ \mathcal{F}_d(L) = \tau_*(\sigma^* L\otimes \oo_{\mathcal{U}}), \]
By cohomology and base change $\mathcal{F}_d(L)$ is indeed a vector bundle. 
%\begin{rem} 
%The secant bundle $\mathcal{F}_d(L)$ is an interesting object in its own right (see \autocite{Mat} or, for a different point of view, \autocite{Sch}).  Moreover,
%if we replace $L$ by a vector bundle $\mathcal{V}$ of rank $s$, we obtain the rank-$ds$ secant vector bundle $\mathcal{F}_d(\mathcal{V})$ which has been studied, for example, in \autocite{BN}.
%\end{rem}
The fibre of $\mathcal{F}_d(L)$ over any point $D\in C_d$ is given by the $d$-dimensional vector space 
$H^0(C,L/L(-D))$.

Finally, let $\Phi:\mathcal{E}\rightarrow \mathcal{F}_d(L)$ be the vector bundle morphism obtained by pushing down to $C_d$ the restriction $\sigma^*L\rightarrow\sigma^*L\otimes\oo_{\mathcal{U}}$. Moreover let
 \[\Sigma_{k,N}(\mu_1,\mu_2) = \Bigl\{ E \in C_d \mid E = \sum_{i=1}^k a_i D_i\text{ for some }D_1\in C_{d_1},\ldots,D_k\in C_{d_k}\}.\]

\noindent The space $DJ_{k,N}^{r,d}(\mu_1,\mu_2,C,l)$ is defined as the $r$\textit{-th degeneracy locus} of $\Phi$, i.e.~the locus in $\Sigma_{k,N}(\mu_1,\mu_2)$ where $\rk\Phi\leq r$.

\begin{lemma}\label{lemma:dimbound}
For every point $D\in DJ_{k,N}^{r,d}(\mu_1,\mu_2,C,l)$, one has
\[ \dim_D DJ_{k,N}^{r,d}(\mu_1,\mu_2,C,l) \geq N-d+r.\] 
\end{lemma}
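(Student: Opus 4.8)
The plan is to recognise the stated inequality as the standard lower bound on the dimension of a degeneracy locus, applied to the morphism $\Phi\colon\mathcal E\to\mathcal F_d(L)$ restricted to the ambient variety $\Sigma_{k,N}(\mu_1,\mu_2)$. Recall that $\mathcal E$ has rank $r+1$ and $\mathcal F_d(L)$ has rank $d$, and that $DJ^{r,d}_{k,N}(\mu_1,\mu_2,C,l)$ is by definition the locus inside $\Sigma_{k,N}(\mu_1,\mu_2)$ where $\rk\Phi\leq r$. The classical codimension bound for determinantal loci (cf.~\autocite{ACGH}, Chapter II) asserts that every irreducible component of such a locus has codimension at most $(r+1-r)(d-r)=d-r$ in the ambient variety, the bound following from the height estimate for the ideal generated by the $(r+1)\times(r+1)$ minors of a matrix of regular functions representing $\Phi$ locally. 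Crucially, this estimate is valid over an arbitrary, possibly singular, base, which matters here since $\Sigma_{k,N}(\mu_1,\mu_2)$ need not be smooth. It therefore suffices to compute $\dim\Sigma_{k,N}(\mu_1,\mu_2)$ and subtract $d-r$.

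To this end I would describe $\Sigma_{k,N}(\mu_1,\mu_2)$ as the image of the addition morphism
\[
\alpha\colon C_{d_1}\times\cdots\times C_{d_k}\longrightarrow C_d,\qquad (D_1,\ldots,D_k)\longmapsto a_1 D_1+\cdots+a_k D_k.
\]
The source is irreducible of dimension $d_1+\cdots+d_k=N$, so $\Sigma_{k,N}(\mu_1,\mu_2)$ is irreducible and its dimension equals $N$ precisely when $\alpha$ is generically finite onto its image. The latter is a purely combinatorial check: for a divisor $E=\sum_j m_j p_j\in C_d$ written with distinct points $p_j$, any preimage amounts to writing each $D_i=\sum_j b_{ij}p_j$ with non-negative integers $b_{ij}$ satisfying $\sum_i a_i b_{ij}=m_j$ for every $j$ and $\sum_j b_{ij}=d_i$ for every $i$; since each $a_i\geq 1$ forces $b_{ij}\leq m_j$, there are only finitely many such solutions. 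Hence the fibres of $\alpha$ are finite and $\dim\Sigma_{k,N}(\mu_1,\mu_2)=N$. Combining this with the codimension bound yields $\dim_D DJ^{r,d}_{k,N}(\mu_1,\mu_2,C,l)\geq N-(d-r)=N-d+r$ at every point $D$ of the locus, as claimed.

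The argument is essentially formal once these two inputs are assembled, and I do not expect a serious obstacle. The only point requiring genuine care is the dimension count for $\Sigma_{k,N}(\mu_1,\mu_2)$, where one must rule out the possibility that the addition map drops dimension onto its image — a concern fully resolved by the finiteness of its fibres established above. It is worth emphasising that this lemma delivers only the lower bound, that is, the expected dimension as a bound from below; the real content of the later Dimension Theorem lies in proving that \emph{equality} holds for a general curve equipped with a general complete linear series, and it is exactly there that the degeneration techniques become indispensable.
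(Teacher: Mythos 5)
Your proposal is correct and follows exactly the paper's own argument: the paper likewise applies the standard determinantal codimension bound $(\rk\mathcal{E}-r)(\rk\mathcal{F}_d(L)-r)=d-r$ to $\Phi$ restricted to $\Sigma_{k,N}(\mu_1,\mu_2)$ and subtracts from $\dim\Sigma_{k,N}(\mu_1,\mu_2)=N$. The only difference is that you justify the equality $\dim\Sigma_{k,N}(\mu_1,\mu_2)=N$ via the finiteness of the fibres of the addition map, a detail the paper asserts without proof; your verification of it is correct.
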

\begin{proof}
From the description of $DJ_{k,N}^{r,d}(\mu_1,\mu_2,C,l)$ as a degeneracy locus, its codimension in $\Sigma_{k,N}(\mu)$ is at most
\[ (\rk \mathcal{E}-r)(\rk \mathcal{F}_d(L) -r)=(r+1-r)(d-r)=d-r.  \]
Since $\dim\Sigma_{k,N}(\mu_1,\mu_2)=N$, the dimension estimate follows.
\end{proof}

Finally, we record here an easy result that forms the base case for the induction argument in the proof of Theorem \ref{cor:nonex}.

\begin{lemma}\label{lemma:nonspecial}
Let $C$ be any smooth curve of genus $g$ with a general linear series $l\in\Grd$.  Fix an integer $k\leq d$ and two vectors of positive integers $\mu_1=(a_1,\ldots,a_k)$ and $\mu_2=(d_1,\ldots,d_k)$ such that $\sum_{i=1}^{k} a_i d_i =  d$ and $N-d+r<0$.
\begin{enumerate}
	\item If $g-d+r=0$, then $DJ_{k,N}^{r,d}(\mu_1,\mu_2,C,l)=\emptyset$.
	\item If $g-d+r<0$ and $N<g$, then $DJ_{k,N}^{r,d}(\mu_1,\mu_2,C,l)=\emptyset$.
\end{enumerate}
\end{lemma}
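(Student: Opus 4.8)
The plan is to reduce the emptiness statement to a dimension count for an Abel--Jacobi type map, using the non-speciality hypothesis to guarantee that the underlying line bundle of a general $l$ is a general point of $\pic^d(C)$. The first step is to note that $g-d+r\leq 0$ forces, via Riemann--Roch, the inequality $h^0(C,L)\geq d-g+1\geq r+1$ for \emph{every} $L\in\pic^d(C)$; hence $\Wrd=\pic^d(C)$, and choosing $l=(L,V)\in\Grd$ general entails in particular that $L$ is general in $\pic^d(C)$. Note that no genericity assumption on $C$ is required for this, which matches the statement of the lemma.

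Next I would introduce the morphism
\[ \alpha\colon C_{d_1}\times\cdots\times C_{d_k}\longrightarrow\pic^d(C),\qquad (D_1,\ldots,D_k)\longmapsto\oo_C(a_1D_1+\cdots+a_kD_k), \]
whose image $\Gamma:=\im\alpha$ is closed (the source being projective) and of dimension at most $\sum_{i=1}^{k}d_i=N$. The crucial numerical point is that $N<g$ in both cases: in case (2) this is assumed outright, whereas in case (1) the relation $g=d-r$ combined with $N-d+r<0$ yields $N<d-r=g$. Consequently $\Gamma$ is a \emph{proper} closed subvariety of $\pic^d(C)$, so a general $L\in\pic^d(C)=\Wrd$ avoids it.

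The conclusion is then immediate by contradiction. If $DJ_{k,N}^{r,d}(\mu_1,\mu_2,C,l)$ were non-empty, it would contain an effective divisor $E=a_1D_1+\cdots+a_kD_k\in\p V\subseteq |L|$, forcing $\oo_C(E)\simeq L$, that is, $L=\alpha(D_1,\ldots,D_k)\in\Gamma$, contradicting $L\notin\Gamma$. Since the whole argument is a dimension estimate, I do not expect a genuine obstacle; the only two points demanding attention are the passage from the genericity of $l$ to that of $L$ (which relies on the identity $\Wrd=\pic^d(C)$, valid for any smooth curve under the non-speciality hypothesis) and the verification that $N<g$ in case (1), where this inequality is not part of the hypotheses but is extracted from the negative expected dimension $N-d+r<0$.
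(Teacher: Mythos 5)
Your proposal is correct and follows essentially the same route as the paper: both arguments observe that the Abel--Jacobi-type map sending $(D_1,\ldots,D_k)$ to $\oo_C(a_1D_1+\cdots+a_kD_k)$ has closed image of dimension at most $N<g$ inside $\pic^d(C)=\Wrd$, so a general $L$ avoids it. The only (cosmetic) differences are that you treat cases (1) and (2) uniformly via $g-d+r\leq 0$ and make the inequality $N<d-r=g$ explicit in case (1), whereas the paper first proves case (1) and then reduces case (2) to it through the forgetful map $c\colon G^{r'}_d(C)\rightarrow W^{r'}_d(C)$ with $r'<r_1=d-g$.
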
  
\begin{proof}
(1): Consider the following restriction of the Abel-Jacobi-type map:
\begin{align*}
    u:\Sigma_{k,N}(\mu_1,\mu_2) &\rightarrow \pic^d(C) \\
    a_1 D_1+\ldots+a_k D_k &\mapsto \oo_C(a_1 D_1 + \ldots + a_k D_k).
  \end{align*}
In the non-special regime $\pic^d(C)=W^r_d(C)$.
  Moreover the image of $\varphi$ is closed and $\dim\im u\leq N<g=\dim\pic^d(C)$.   Thus a general line bundle $L\in\pic^d(C)$ is not contained in the image of $u$ whence we conclude that the divisor $a_1 D_1 + \ldots + a_k D_k$ is not contained in a general linear series $l$ of type $\grd$, i.e.~$DJ_{k,N}^{r,d}(\mu_1,\mu_2,C,l)=\emptyset$.  This is Corollary \ref{cor:nonex} for non-special linear series.  \\
As a consequence, for all $r'<r$, a general linear series $l'$ in $\text{G}^{r'}_d(C)$ also
has \[DJ_{k,N}^{r',d}(\mu_1,\mu_2,C,l')=\emptyset.\]  To see this, let $c:\text{G}^{r'}_d(C)\rightarrow\text{W}^{r'}_d\subset\Wrd$ be the forgetful map $(L,V)\mapsto L$.  Note that a line bundle $L\in\Wrd\setminus\im u$ does not admit de Jonquières divisors of length $N$.  Now, since $c$ is continuous (as the projection morphism from a Grassmann bundle), $c^{-1}(\text{W}^{r'}_d(C)\setminus\im u)$ is also open in $\text{G}^{r'}_d$ and nonempty.  Hence no $l'\in c^{-1}(\text{W}^{r'}_d(C)\setminus\im u)$ admits a de Jonquières divisor of length $N$ and our claim is proved.\\ 
(2): Set $r_1=d-g$ so that $g-d+r_1=0$ and $r<r_1$.  We conclude from the discussion above that  
if $N<g$, then $DJ_{k,N}^{r,d}(\mu_1,\mu_2,C,l)=\emptyset$ for a general linear series $l$.  The non-existence for $n\geq g$ for $DJ_n^{r,d}(\mu,C,l)$ follows by an induction argument explained in Section \ref{sec:compactified}. 
\end{proof}

\subsection{Existence of de Jonqui\`eres divisors}\label{sec:existencedej}
\indent Luckily, the question of existence is easily answered in a manner similar to that of the first proofs of the  existence part of the Brill-Noether theorem (\cite{Ke} and \cite{KL}).  The idea is to simply look at the class of $DJ_{k,N}^{r,d}(\mu_1,\mu_2,C,l)$ and establish its positiveness.
Consider the \textit{diagonal mapping} for $C_d$:
\begin{align*}
\epsilon: C_{d_1}\times\ldots\times C_{d_k} &\rightarrow C_d \\
D_1+\ldots+D_k &\mapsto a_1 D_1+\ldots +a_k D_k.
\end{align*}
It is well-known (see for example chapter VIII \S 5 of \autocite{ACGH}) that the 
image, via $\epsilon$ of the fundamental class of $\epsilon(C_{d_1}\times\ldots\times C_{d_k})$
is equal to the coefficient of the monomial $t_1^{d_1} \cdot\ldots \cdot t_k^{d_k}$ in 
\[ \sum_{a\geq b} \frac{(-1)^{a+b}}{b!(a-b)!}\left(1+\sum_{i=1}^{k}a_it_i\right)^{N-g+b} \left( 1+\sum_{i=1}^{k}a_i^2t_i \right)^{g-b}x^{d-N-a}\theta^a,\]
where $\theta$ is the pullback of the fundamental class of the theta divisor to $C_d$ and $x$ the class of the divisor $q+C_{d-1}\subset C_d$. 
Evaluating this formula on a linear series $l$ of degree $d$ and dimension $r$, and using that $\theta\mid l=0$, we obtain the following expression for the class of $DJ^{r,d}_n(\mu,C,l)$:
\[ \left(1+\sum_{i=1}^{k}a_it_i\right)^{N-g} \left( 1+\sum_{i=1}^{k}a_i^2t_i \right)^{g}x^{d-N}[l]. \]
If $N-d+r\geq 0$, this class is clearly positive and yields the non-emptiness of $DJ_{k,N}^{r,d}(\mu_1,\mu_2,C,l)$.  

\subsection{Transversality condition}\label{sec:nonspecial}
\indent We deal here only with the case of complete linear series $l\in\Grd$ such that $|D|=l$ for some $D\in C_d$.
Consider the alternative description of the space $DJ^{r,d}_{k,N}(\mu_1\mu_2,C,l)$ as the intersection
\[ DJ^{r,d}_{k,N}(\mu_1,\mu_2,C,l) = \Sigma_{k,N}(\mu_1,\mu_2) \cap |D|. \]
The condition for transversality of intersection is:
\begin{equation}\label{eq:transvers}
T_D(C_d) = T_D(\Sigma_{k,N}(\mu_1,\mu_2)) + T_D(|D|), 
\end{equation}
for $D=\sum_{i=1}^k a_i D_i$, for divisors $D_i \in C_{d_i}$ and fixed vectors of strictly positive integers $\mu_1=(a_1,\ldots,a_k)$ and $\mu_2=(d_1,\ldots,d_k)$ satisfying $\sum_{i=1}^k a_k d_k=d$.

Recall that $T_D (C_d) = H^0(C,\oo_D (D))$, as shown for instance in \autocite{ACGH} chapter IV, \S 1.  Moreover its dual is $T_D^{\vee}(C_d)=H^0(K_C/K_C(-D))$ and the pairing between the tangent and cotangent space is given by the residue.

To compute $T_D(\Sigma_{k,N}(\mu_1,\mu_2))$, 
let $\mathcal{D}_i$ denote the diagonal in the $a_i$-th product $C_{d_i}\times\ldots \times C_{d_i}$ so that $\Sigma_{k,N}=\mathcal{D}_1 \times\ldots\times\mathcal{D}_k/S_d$.  Hence
\[ T_D(\Sigma_{k,N}(\mu_1,\mu_2)) = T_{a_1 D_1}\mathcal{D}_1\oplus\ldots\oplus T_{a_k D_k}\mathcal{D}_k. \]
Since $\mathcal{T}\mathcal{D}_i=\mathcal{T}C_{d_i}$ for all $i=1,\ldots,k$,
\begin{align*}
T_D(\Sigma_{k,N}(\mu_1,\mu_2)) &= T_{D_1}C_{d_1} \oplus \ldots \oplus T_{D_k}C_{d_k} \\
&\simeq T_{(D_1,\ldots,D_k)} C_{d_1} \times \ldots \times C_{d_k} \\
&\simeq T_{D_1 + \ldots + D_k} C_N \\
&= H^0(C,\oo_C(D_1 + \ldots  + D_k)/\oo_C)\\
&\simeq H^0(K_C(-D_1-\ldots-D_k)/K_C(-D))^0,
\end{align*}
 where the superscript $^0$ denotes the annihilator of a vector space (or more precisely the orthogonality with respect to the natural pairing given by the residue mentioned above).  The last isomorphism follows from the following argument: consider the following exact sequence of sheaves:
\[ 0\rightarrow \oo_C(D_1 + \ldots + D_k)/\oo_C \rightarrow \oo_C(D)/\oo_C\rightarrow \oo_C(D)/\oo_C(D_1 + \ldots + D_k)\rightarrow 0.\]
We then immediately get that 
\[ H^0 (C, \oo_C(D)/\oo_C(D_1 + \ldots + D_k))= H^0(C,\oo_C(D)/\oo_C)\Big/H^0(C,\oo_C(D_1+\ldots+D_k)/\oo_C). \]
Dualising, we obtain
\[ H^0(C, \oo_C(D)/\oo_C(D_1 + \ldots + D_k))^{\vee}\simeq H^0(C,\oo_C(D_1 + \ldots  + D_k)/\oo_C)^{0}. \]
Furthermore, Serre duality applied to the long exact cohomology sequence of the sequence 
\[ 0\rightarrow \oo_C(D_1 + \ldots + D_k) \rightarrow \oo_C(D) \rightarrow \oo_C(D)/\oo_C(D_1 + \ldots + D_k)\rightarrow 0 \]
yields
\[ H^0(C, \oo_C(D)/\oo_C(D_1 + \ldots + D_k))^{\vee} \simeq H^0(K_C(-D_1-\ldots-D_k)/K_C(-D)). \]
Thus, we conclude that
\[T_D(\Sigma_{k,N}(\mu_1,\mu_2))\simeq H^0(K_C(-D_1-\ldots-D_k)/K_C(-D))^{0}.\]

To determine $T_D |D|$, consider the following restriction of the Abel-Jacobi map:
\[u:\Crd \rightarrow \Wrd\]
with differential given by
\[\delta:\im(\alpha\mu_0)^0\rightarrow\im(\mu_0)^0,\]
where $\delta$ denotes the restriction of the coboundary map 
\[H^0(C,\oo_D(D))\rightarrow H^1(C,\oo_C)\]
of the Mittag-Leffler sequence to $T_D\Crd=\im(\alpha\mu_0)^0$, while
\[\alpha:H^0(C,K_C)\rightarrow H^0(C,K_C\otimes\oo_D)\] is the restriction mapping and
\[ \mu_0:H^0(C,K_C-D)\otimes H^0(C,\oo_C(D))\rightarrow H^0(C,K_C) \]
the cup-product mapping (see Chapter IV of \autocite{ACGH} for details).

Let $D\in\Crd$.  Then $\vert D \vert\subset\Crd$ and $u(D)\in\Wrd$ with $u^{-1}(u(D))=\vert D\vert$.  Since $\delta$ is surjective by definition,
\[T_D\vert D\vert = T_D (u^{-1}(u(D)))=\ker\delta=\im(\alpha)^0,\]
where the dual map $\delta^{\vee}$ is the restriction of $\alpha$ to $(\im(\mu_0)^0)^{\vee}=\coker(\mu_0)$.

The transversality condition (\ref{eq:transvers}) translates to
\[T_D C_d = H^0\Bigl(C,K_C\Bigl(-\sum_{i=1}^k D_i\Bigr)/K_C(-D)\Bigr)^0 + \im(\alpha)^0\]
which is equivalent to:
\[H^0\Bigl(C,K_C\Bigl(-\sum_{i=1}^k D_i\Bigr)/K_C(-D)\Bigr)\cap\im(\alpha)=0.\]
%If the Brill-Noether number $\rho(g,r,d)=0$, then $\mu_0$ is an isomorphism and this means that there are two possibilities for a differential in $H^0(C,K_C)$: 
%\begin{enumerate}
%	\item the differential is of the form $1\otimes\omega$ with $\omega\in H^0(C,K_C-D)$.  Then clearly $\delta^{\vee}(\omega)=0$.
%	\item the differential is of the form $f\otimes\omega$, where $f$ is a meromorphic function with divisor of poles $D$.  Assuming that $f\otimes\omega\in H^0(C,K_C-D_1-\ldots-D_k)$, we notice that it vanishes on $D_1 + \ldots + D_k$ if and only if $\omega\in H^0(C,K_C-D-D_1-\ldots-D_k)$.
%\end{enumerate}
We conclude that the transversality condition (\ref{eq:transvers}) ca be reformulated as:
\begin{equation}\label{eq:transversefinal}
H^0(C,K_C-D-D_1-\ldots-D_k)=0.
\end{equation}  

%If the Brill-Noether number $\rho(g,r,d)\geq 1$, then the transversality condition becomes 
%\[H^0\Bigl(C,K_C-\sum_{i=1}^k D_i\Bigr)\cap\coker(\mu_0)=0.\]
%Since $H^0(C,K_C)=\im(\mu_0)\oplus\coker(\mu_0)$, the transversality condition becomes
%\[H^0\Bigl(C,K_C-\sum_{i=1}^k D_i\Bigr)\subseteq\im(\mu_0).\]
%Using the same argument as in the previous case, we obtain the same transversality condition (\ref{eq:transversefinal}) as in the case $\rho(g,r,d)=0$.

Note that the condition (\ref{eq:transversefinal}) is immediately satisfied by non-special and canonical linear series therefore proving Theorem \ref{dimension} in these cases.
There are actually a few more cases where transversality follows without using degenerations to nodal curves.   

\subsubsection{The case $r=1$}
The argument in this case is similar to the one in Section 5 of \autocite{HM}.  The idea is to consider the map 
\[ \pi:C\rightarrow\p^1 \]
given by the de Jonqui\`eres divisor $D=\sum_{i=1}^k a_i D_i$ and its versal deformation space $\mathcal{V}$.  Moreover, let $\mathcal{V}'\subset\mathcal{V}$ be the subvariety of maps given by divisors with the same coefficients as $D$.  Then the tangent space to $\mathcal{V}$ at $\pi$ is $T_\pi \mathcal{V} = H^0(C,\mathcal{N})$, where $\mathcal{N}$ is the normal sheaf of $\pi$ defined by the exact sequence
\[ 0 \rightarrow \mathcal{T}_C \rightarrow \pi^*\mathcal{T}_{\p^1}\rightarrow\mathcal{N}\rightarrow 0, \]
where $\mathcal{T}_C$ is the tangent sheaf of $C$ and $\mathcal{T}_{\p^1}$ the tangent sheaf of $\p^1$.

Consider also the forgetful map $\beta:\mathcal{V}\rightarrow\mathcal{M}_g$, with differential $\beta_*$ given by the coboundary map 
\[ H^0(C,\mathcal{N})\rightarrow H^1(	C,\mathcal{T}_C) \]
of the exact sequence above.  We now identify the tangent space to $\mathcal{V}'$ with the subspace of $H^0(C,\mathcal{N})$ of sections of $\mathcal{N}$ that vanish in a neighbourhood of the points in the support of $D_1 +\ldots+D_k$, i.e. the sections of the sheaf $\mathcal{N}'$ defined by the sequence
\[ 0\rightarrow\mathcal{T}_C\rightarrow\pi^*\mathcal{T}_{\p^1}(-(a_1-1)D_1-\ldots-(a_k-1)D_k)\rightarrow\mathcal{N}'\rightarrow 0. \]
Since $\pi$ is a point in the general fibre of $\beta|_{\mathcal{V}'}$, from Sard's theorem it follows that the differential $\beta_*$ restricted to $\mathcal{T}_{\pi}(\mathcal{V})$ is surjective.  This in turn means that the map $\beta'$ below is surjective:
\[ H^0(C,\mathcal{N}')\xrightarrow{\beta'}H^1(C,\mathcal{T}_C)\rightarrow H^1(C,\pi^*\mathcal{T}_{\p^1}(-(a_1-1)D_1-\ldots-(a_k-1)D_k))\rightarrow 0. \]
Now, note that $\mathcal{T}_{\p^1}\simeq\oo_{\p^1}(2)$ and moreover $\pi^*\oo_{\p^1}(1)=\oo_C(D)$.  Therefore
\begin{align*}
0&=H^1(C,\oo_C(2D -(a_1-1)D_1-\ldots-(a_k-1)D_k))\\
&=H^0(C,K_C-D-D_1-\ldots-D_k)
\end{align*}
as desired.

\subsubsection{The case $r=2$}
For the case of plane curves we reformulate the transversality of intersection in terms of the dual plane $(\p^2)^{\vee}$ and closed subschemes of symmetric powers of $\p^2$.

Note first that a divisor belongs to the intersection $\Sigma_{k,N}(\mu_1,\mu_2)\cap |D|$ if and only if the points in its support are collinear.  Denote by $\mathcal{Z}^d$ the $(d+2)$-dimensional smooth subvariety of the $d$-th symmetric product of $\p^2$ corresponding to collinear length $d$ zero-cycles in $\p^2$ and let $\p^M:=\p H^0(\p^2,\oo_{\p^2}(d))$ be the space parametrising all plane curves of degree $d$, where $M=\binom{d+2}{2}-1$.  Let $\Delta$ denote the locus in $\mathcal{Z}^d$ of zero-cycles of the form $a_1 D_1 + \ldots + a_k D_k$, where the $D_i$ are collinear length $d_i$ zero-cycles, $\sum_{i=1}^k d_i=N$, and $\sum_{i=1}^k a_i d_i = d$.  We immediately get that $\Delta$ is isomorphic to $\mathcal{Z}^N$.
Consider the following morphism:
\[ \varphi: \p^M \times (\p^2)^{\vee} \rightarrow \mathcal{Z}^d\]
given by the intersection product and denote by $\Gamma$ the preimage $\varphi^{-1}(\Delta)$. 
Moreover, we have the following equivalent description of $\Gamma$:
\[\Gamma=\{ ([C],[H]) \mid H\cap C = a_1 D_1 + \ldots + a_k D_k \}\subset
\p^M \times (\p^2)^{\vee}.\]
Note that $([C],[H])\in\Gamma$ if and only if the line $H$ is determined by an element of $\Delta$.  Moreover, for each such element of $\Delta$, the points in the supports of the $D_1,\ldots,D_k$ (with multiplicity) impose $d$ independent conditions on $\p^M$.  Hence we see that
\[\dim\Gamma=(M-d)+(N+2)=M+N-d+2.\]
Let $\pi_1:\Gamma\rightarrow\p^M$ be the projection onto the first factor.
From the existence of de Jonqui\`eres divisors when $N-d+2\geq 0$, we see that the restriction to $\Gamma$ of $\pi_1$ is dominant.  We conclude that for a general point in $\p^M$, the fibre is smooth and has dimension $N-d+2$.  Hence we conclude that for a general plane curve, the space of de Jonqui\`eres divisors $DJ_{k,N}^{r.d}(\mu_1,\mu_2,C,l)$ is smooth and of expected dimension.

\subsubsection{The case $g-d+r=1$}
Let $D=\sum_{i=1}^k a_i D_i$ be a de Jonqui\`eres divisor such that $l=|D|$ is a $\grd$ with $g-d+r=1$ and, as usual, let $L$ denote the corresponding line bundle.  This means that the residual linear series $K_C-l$ is an isolated divisor $E\in C_{2g-2-d}$ such that $K_C=\oo_C(D+E)$.  Consider the subspace
\[ \mathcal{P}_d = \{ L\in\pic^d(C) \mid h^1(C,L)=1 \}\subset\pic^d(C), \]
which, by the previous observation, has dimension $2g-2-d$.
Now consider the space
\[ \mathcal{Q}=\{ (E,D_1 + \ldots + D_k)\in C_{2g-d-2}\times C_N \mid \oo_C(E + a_1 D_1 + \ldots + a_k D_k) = K_C  \}. \]
Polishchuk \autocite{Po} shows that this space is smooth and such that 
\[\dim\mathcal{Q}=N+g-d-1.\]
Hence, for a general fixed isolated divisor $E\in C_{2g-d-2}$, the space
\[ \mathcal{Q}'=\{ D_1 +\ldots+ D_k\in C_N \mid \oo_C(a_1 D_1 + \ldots + a_k  D_k) = \oo_C(K_C-E) \} \]
is also smooth and of dimension
\[ (N+g-d-1)-(2g-d-2) = N-g+1=N-d+r,\]
which immediately implies the same for the space $DJ^{r,d}_{k,N}(\mu_1,\mu_2,C,l)$ for a general linear series $l$ with $g-d+r=1$.  

We can in fact do better than this and prove transversality for an arbitrary linear series $l$ with $g-d+r=1$.  From Polishchuk's result we have that the intersection
\[ \Sigma=\{ E+D \in C_{2g-2} \mid D\in \Sigma_{k,N}(\mu_1,\mu_2)\}\cap |K_C| \]
is transverse, i.e.
\[ T_{E+D} (\Sigma) + T_{E+D}|K_C| = T_{E+D} (C_{2g-2}). \]
Using the fact that
\begin{align*}
T_{E+D} (\Sigma) &= T_E (C_{2g-d-2}) \oplus T_{D} (\Sigma_{k,N})\\
T_{E+D} (C_{2g-2}) &= T_E C_{2g-d-2} \oplus T_D (C_d) \\
T_{E+D} |K_C| &= T_E |E| \oplus T_D |K_C - E| = T_D |L|,
\end{align*}   
we obtain
\[ T_D \Sigma_{k,N}(\mu_1,\mu_2) + T_D |L| = T_D C_d,\]
which is the sought after transversality condition.

Therefore, in order to prove Theorem \ref{thm:smooth}, it remains to check the transversality condition (\ref{eq:transversefinal}) for $r\geq 3$ and $g-d+r\geq 2$.  We do this using degenerations in Section \ref{sec:smooth}.

\section{De Jonqui\`eres divisors on nodal curves of compact type}\label{nodes}
In the case of nodal curves, the usual correspondence between divisors and line bundles breaks down.  Most significantly for our problem, the Abel-Jacobi map
\[ C_d \rightarrow \pic^d(C) \]  
does not make sense any more, even though the two spaces $C_d$ and $\pic^d(C)$ are still defined.
As a simple example of this failure, the sheaf of functions with one pole at one of the nodes is not a line bundle, while the sheaf of functions with two poles at the node has degree 3.
We therefore first need to make sense of the statement that a linear series on a nodal curve admits a de Jonqui\`eres divisor.  We do this in a variational setting, by considering families of smooth curves degenerating to a nodal curve and analysing what happens on the central fibre to limits of line bundles admitting de Jonqui\`eres divisors.
As mentioned in the introduction, we approach this issue from two points of view: limit linear series for central fibres of compact type in Section \ref{sec:lls} and compactified Picard schemes for stable central fibres in Section \ref{sec:compactified}.  
	\subsection{Limit linear series approach}\label{sec:lls}
	\subsubsection{Review of limit linear series}\label{sec:llsreview}
\indent We recall a few well-known facts.  Consider a smooth, projective 1-parameter family $\pi:\mathscr{X}\rightarrow B$ of curves of genus $g$ over $B=\text{Spec}(R)$ with a section, where $R$ is a discrete valuation ring with uniformising parameter $t$.  Denote by $0$ the point corresponding to the maximal ideal of $R$ and by $\eta$, $\bar{\eta}$ the the generic and geometric generic point of $B$, respectively.  Finally, let the special fibre $\mathscr{X}_0$ be a reduced nodal curve of compact type, while $\mathscr{X}_{\bar{\eta}}$ is assumed to be a smooth, irreducible curve of the same genus.

Now let $(\mathscr{L}_{\bar{\eta}},\mathscr{V}_{\bar{\eta}})$ be a $\grd$ on $\mathscr{X}_{\bar{\eta}}$.  In \cite{EH86} Eisenbud and Harris show how this gives rise to a limit series on the special fibre $\mathscr{X}_0$, after possibly replacing some nodes of $\mathscr{X}_0$ by smooth rational curves via base change.  We summarise the main ideas of their construction that are needed in the course of our paper.  After the base change, one may assume that $(\mathscr{L}_{\bar{\eta}},\mathscr{V}_{\bar{\eta}})$ comes from a linear series $(\mathscr{L}_{\eta},\mathscr{V}_{\eta})$ of type $\grd$ on $\mathscr{X}_{\eta}$, which in turn determines a $\grd$ on each irreducible component $Y$ of $\mathscr{X}_0$ as follows: because $\mathscr{X}$ is smooth, the line bundle $\mathscr{L}_{\eta}$ extends to a line bundle $\mathscr{L}$ on $\mathscr{X}$.  The extension line bundle is not unique, as  
$\mathscr{L}\otimes\oo_{\mathscr{X}}(\mathcal{C})$
is also an extension of $\mathscr{L}_{\eta}$, for any Cartier divisor $\mathcal{C}$ supported on $\mathscr{X}_0$.
Hence there exists an extension $\mathscr{L}_Y$ of $\mathscr{L}_{\eta}$, unique up to isomorphism, with the property that $\deg(\mathscr{L}_Y|_Y)=d$ and for any other irreducible component $Z\neq Y$ of $\mathscr{X}_0$, $\deg(\mathscr{L}_Y|_Z)=0$.  By setting $\mathscr{V}_Y:=(\mathscr{V}_{\eta}\cap\pi_*\mathscr{L}_Y)\otimes k(0)$, we get that
\[ \mathscr{V}_Y \simeq \pi_*\mathscr{L}_Y \otimes k(0) \subseteq H^0(\mathscr{L}_Y|_{\mathscr{X}_0}) \]
is a vector space of dimension $r+1$ which we identify with its image inside $H^0(\mathscr{L}_Y|_Y)$.  The pair $(\mathscr{L}_Y|_Y,\mathscr{V}_Y)$ is thus a $\grd$ on $Y$ and is called the $Y$-\textit{aspect} of $(\mathscr{L}_{\eta},\mathscr{V}_{\eta})$.  We call the \textit{limit} of $(\mathscr{L}_{\eta},\mathscr{V}_{\eta})$ the collection of aspects
\[ l=\{ (\mathscr{L}_Y|_Y,\mathscr{V}_Y) \mid Y \text{ component of }\mathscr{X}_0 \}. \]
For a nodal curve $X$ of compact type, a collection
\[l=\{ l_Y \in G^r_d(Y) \mid Y \text{ component of }X \}\]
together with certain compatibility conditions on the \textit{vanishing sequence}
\[ 0 \leq a_0(l_Y,p) < a_1(l_Y,p) < \cdots < a_r(l_Y,p)\leq d \]
 at a point $p\in Y$, where the $a_i(l_Y,p)$ are the orders with which non-zero sections of $l_Y$ vanish at $p$, is called a \textit{crude limit linear series}.  The compatibility conditions are: if
$Z$ is another component of $\mathscr{X}_0$ such that $Y\cap Z = p$, then for all $i=0,\ldots,r$,
\begin{equation}\label{eq:vanish}
 a_i(l_Y,p) + a_{r-i}(l_Z,p) \geq d. 
\end{equation}
If we have equality in (\ref{eq:vanish}), then $l$ is called a \textit{refined limit linear series}.  Since refined limit linear series are in fact the ones playing the role of ordinary limit series on smooth curves, we shall usually drop the adjective ``refined'' unless necessary. 
It was proved in \cite{EH86} that limit linear series indeed arise as limits of ordinary linear series on smooth curves.        

Unfortunately, it is not always true that a limit linear series on a nodal curve $\mathscr{X}_0$ occurs as the limit of linear series on a family $\mathscr{X}$ of smooth curves specialising to $\mathscr{X}_0$.  While on the one hand there are examples of limit linear series that cannot be smoothed, on the other there are techniques for proving the smoothability of certain series, under some assumptions.  For details, see Section 3 of \cite{EH86}.

In the subsequent sections we shall also need the concept of a \textit{ramification sequence} at a point $p\in Y$:
\[ 0 \leq \alpha_0(l_Y,p) \leq \alpha_1(l_Y,p) \leq \cdots \leq \alpha_r(l_Y,p)\leq d-r,  \]
where $\alpha_i(l_Y,p)=a_i(l_Y,p)-i$.

\subsubsection{De Jonquières divisors on a single nodal curve of compact type}
We now make precise what we mean by saying that a limit linear series has a de Jonqui\`eres divisor.  We use the notation of the previous section.

\begin{defi}[De Jonquières divisors on a nodal curve of compact type]\label{def:comtype}
Let $X$ be a nodal curve of compact type equipped with a refined limit linear series $l$ of type $\grd$. 
Fix an integer $k\leq d$ and two vectors of positive integers $\mu_1=(a_1,\ldots,a_k)$ and $\mu_2=(d_1,\ldots,d_k)$ such that $\sum_{i=1}^{k} a_i d_i =  d$.
The divisor $\sum_{i=1}^k a_i D_i$ with $D_i\in C_{d_i}$ on $X$ is a \textit{de Jonquières divisor for} $l$ if for each irreducible component $Y\subset X$, its corresponding $\grd$ $l_Y$ has a section vanishing on $\sum_{D_{i,Y}\subset Y} a_i D_{i,Y}$, where $D_{i,Y}$ denotes the restriction of the divisor $D_i$ on the component $Y$.  We denote the space of de Jonqui\`eres divisors for a limit linear series $l$ on $X$ by $DJ^{r,d}_{k,N}(\mu_1,\mu_2,X,l)$.
\end{defi}

The sections above will also vanish at the nodes of $X$ belonging to $Y$, and in such a way that equality in (\ref{eq:vanish}) is satisfied.
Hence we can give an equivalent description for de Jonqui\`eres divisors corresponding to a refined limit linear series $l$ on $X$ which will be useful later in the paper.  We say that  $\sum_{i=1}^k a_i D_i$ is a de Jonqui\`eres divisor for $l$ if:
\begin{itemize}
	\item for each irreducible component $Y\subset X$ with only one node $q\in Y$, the series $l_Y$ admits the de Jonqui\`eres divisor
 	\[ \sum_{i=1}^k a_i D_{i,Y} + \biggl(d-\sum_{i=1}^k a_i d_{i,Y}\biggr) q, \]
 	where $d_{i,Y}=\deg D_{i,Y}$;
 	\item for each irreducible component $Y\subset X$ with at least two nodes, the series $l_Y$ admits the de Jonqui\`eres divisor
 	\[\sum_{i=1}^k a_i D_{i,Y} + \sum_{q\in\text{Sing}(\mathscr{X}_0),q\in Y}\biggl( \sum_{i=1}^k a_i d_{i,Z_q} \biggr)q,\]
 	where $Z_q$ is the irreducible component of $\mathscr{X}_0$ attached to $Y$ at the node $q\in Y$.
\end{itemize}
 
 We therefore have a way to go from de Jonquières divisors on a nodal curve of compact type to de Jonquières divisors on its smooth components, where the coefficients of the nodes must of course satisfy the equality in (\ref{eq:vanish}).

Now assume that the limit linear series $l$ in Definition \ref{def:comtype} is smoothable, that is there exists a family of curves $\pi:\mathscr{X}\rightarrow B$ and a $\grd$ $(\mathscr{L}_{\eta},\mathscr{V}_{\eta})$ as in Section \ref{sec:llsreview} whose limit is $l$.
Fix $Y\subset\mathscr{X}_0$ an irreducible component of the central fibre $\mathscr{X}_0$.  Let $\mathscr{D}_{\eta}=(\sigma)\in|\mathscr{L}_{\eta}|$ be a divisor on $\mathscr{X}_{\eta}$,  where $\sigma$ is a section of $\mathscr{L}_{\eta}$.  To find the limit of $\mathscr{D}_{\eta}$ on $\mathscr{X}_0$, we multiply $\sigma$ by the unique power of $t\in B_{\eta}$ so that it extends to a holomorphic section $\sigma_Y$ of the extension $\mathscr{L}_Y$ on the whole of $\mathscr{X}$ and so that it does not vanish identically on $\mathscr{X}_0$.  The limit of $\mathscr{D}_{\eta}$ on $\mathscr{X}_0$ is then the divisor $(\sigma_Y|_Y)$.  Thus limits of de Jonqui\`eres divisors are exactly the de Jonqui\`eres divisors for limit linear series described in Definition \ref{def:comtype}. 

In what follows we describe the space of de Jonquières divisors on families of nodal curves of compact type and endow it once more with the structure of a degeneracy locus with the help of the construction of spaces of limit linear series due to Osserman \cite{Osbook}.

\subsubsection{Degenerations of de Jonqui\`eres divisors}
Let $\mathscr{X}\rightarrow B$ be a flat, proper family of curves, where $B$ is a scheme.  Fix a partition $\mu=(a_1,\ldots,a_n)$ of $d$ and let $p_1,\ldots,p_n:B\rightarrow\mathscr{X}$ be the sections corresponding to the markings on each fibre $\mathscr{X}_t$, for $t\in B$.
By making the necessary base changes we ensure that the markings specialise on smooth points of the central fibre $\mathscr{X}_0$. 

Before discussing degenerations of de Jonquières divisors, we recall some facts about moduli spaces of (limit) linear series, following the work of Osserman \autocite{Os} and the exposition in \autocite{Osbook}.  

To begin with, let $\pi:\mathscr{X}\rightarrow B$ be a proper family of smooth curves of genus $g$ with a section.  Following Definition 4.2.1 in \autocite{Osbook}, the \textit{functor} $\mathscr{G}^r_d(\mathscr{X}/B)$ \textit{of linear series of type} $\grd$ is defined by associating to each $B$-scheme $T$ the set of equivalence classes of pairs $(\mathscr{L},\mathscr{V})$, where $\mathscr{L}$ is now a line bundle on $\mathscr{X} \times_B T$ with degree $d$ on all fibres, and $\mathscr{V}\subseteq\pi_{2*}\mathscr{L}$ is a subbundle of rank $r+1$, where $\pi_2$ denotes the second projection from the fibre product.  For the precise definition of the equivalence relation, we refer the reader to \autocite{Osbook}. This functor is represented by a scheme $G^r_d(\mathscr{X}/B)$ which is proper over $B$.

Assume now that the fibres of the family $\pi:\mathscr{X}\rightarrow B$ are nodal curves of genus $g$ of compact type such that no nodes are smoothed.  Hence all fibres have the same dual graph $\Gamma$.  For each vertex $v$ of $\Gamma$, let $Y^v_t$ denote the irreducible component of $\mathscr{X}_t$ corresponding to $v$.  Thus for each $v$ we have a family $\mathscr{Y}^v$ of smooth curves over $B$ with fibres given by $Y^v_t$.  In this case the functor $\mathscr{G}^r_d(\mathscr{X}/B)$ of linear series of type $\grd$ is defined as follows.  Consider the product fibred over $B$
\[ \prod_{v} \mathscr{G}^r_d(\mathscr{Y}^v/B). \]
Let $T$ be a scheme over $B$. A $T$-valued point of the above product consists of tuples of pairs $(\mathscr{L}^v,\mathscr{V}^v)$, where $\mathscr{L}^v$ is a vector bundle of degree $d$ on $\mathscr{Y}^v \times_B T$ and $\mathscr{V}\subseteq\pi_{2*}\mathscr{L}^v$ is a subbundle of rank $r+1$.  Denote by $\mathscr{L}^{\vec{d}}$ the ``canonical'' line bundle of degree $d$ and multidegree $\vec{d}$ on $\mathscr{X}\times_B T$ obtained as in 4.4.2 of \autocite{Osbook}.  Moreover, a line bundle has multidegree $\vec{d}^v$ if it has degree $d$ on the component corresponding to the vertex $v$ and degree zero on all the other components.  Note also that for two distinct multidegrees $\vec{d}$ and $\vec{d'}$, there is a unique twist map $f_{\vec{d},\vec{d'}}:\mathscr{L}^{\vec{d}}\rightarrow\mathscr{L}^{\vec{d'}}$ obtained by performing the unique minimal number of line bundle twists.
According to Definition 4.4.7 in loc.cit., a $T$-valued point of $\prod_{v} \mathscr{G}^r_d(\mathscr{Y}^v/B)$ is in $\mathscr{G}^r_d(\mathscr{X}/B)(T)$ if, for all multidegrees $\vec{d}$ of $d$, the map
\[ \pi_{2*} \mathscr{L}^{\vec{d}} \rightarrow \bigoplus_v (\pi_{2*}\mathscr{L}^v)/\mathscr{V}^v \] 
induced by the restriction to $\mathscr{Y}^v$ and $f_{\vec{d},\vec{d}^v}$
has its $(r+1)$st degeneracy locus equal to all of $T$.  With this construction, $\mathscr{G}^r_d(\mathscr{X}/B)$ is also represented by a scheme $G^r_d(\mathscr{X}/B)$ proper over $B$.

Finally, if $\pi:\mathscr{X}\rightarrow B$ is a smoothing family (for details, see 4.5 of \autocite{Osbook}), the irreducible components $Y^v_t$ may not exist for certain $t\in B$ and it follows that the dual graph of the fibres of the family is not constant. We assume from now on that there is a unique maximally degenerate fibre with dual graph $\Gamma_0$ (i.e.~the family is locally smoothing).  We fix a vertex $v_0 \in V(\Gamma_0)$ and set $\vec{d_0}:=\vec{d}^{v_0}$.   We then replace the tuples of pairs $(\mathscr{L}^v,\mathscr{V}^v)$ with tuples $(\mathscr{L},(\mathscr{V}^v)_{v\in V(\Gamma_0)})$, where $\mathscr{L}$ is a line bundle of multidegree $\vec{d_0}$ on $\mathscr{X}\times_B T$, and for each $v\in V(\Gamma_0)$, the $\mathscr{V}^v$ are subbundles of rank $r+1$ of the twists $\pi_{2*}\mathscr{L}^{\vec{d}^v}$.  Let $f:T\rightarrow B$ be a $B$-scheme.
 A $T$-valued point $(\mathscr{L},(\mathscr{V}^v)_{v\in V(\Gamma_0)})$ is in $\mathscr{G}^r_d(\mathscr{X}/B)(T)$ if for an open cover $\{U_m\}_{m\in I}$ of $B$ satisfying certain technical properties explained in 4.5.2 of \autocite{Osbook}, for all $m\in I$ and all multidegrees $\vec{d}$ of $d$, the map
\[ \pi_{2*}\mathscr{L}^{\vec{d}}|_{(f\circ \pi_2)^{-1}(U_m)} \rightarrow \bigoplus_v \left( \pi_{2*}\mathscr{L}^{\vec{d}^v}|_{(f\circ\pi_2)^{-1}(U_m)} \right)/\mathscr{V}^v|_{f^{-1}(U_m)},  \]
induced by the appropriate (local) twist maps,
has its $(r+1)$st degeneracy locus equal to the whole of $U_m$.

\begin{rem}\label{rem:functor}
The functor of linear series with points given by tuples $(\mathscr{L},(\mathscr{V}^v)_{v\in V(\Gamma_0)})$  is naturally isomorphic to the linear series functor with points given by tuples of pairs $(\mathscr{L}^v,\mathscr{V}^v)$ in the case of families where no nodes are smoothed (this is Proposition 4.5.5 in loc.cit.).
\end{rem}

\begin{rem}
All the constructions can be shown to be independent of the choice of vertex $v_0$, twist maps, and open covers $\{U_m\}_{m\in I}$.
\end{rem}

Note that all constructions are compatible with base change and moreover, the fibre over $t\in B$ is a limit linear series space when $\mathscr{X}_t$ is reducible, and a space of classical linear series when $\mathscr{X}_t$ is smooth.  As a last remark, since working with (refined) limit linear series in the sense of Eisenbud and Harris is more convenient for practical purposes, we generally restrict to those (see Section 6 of \autocite{Os} for the connection between the two approaches).

%To ease notation, from now a point of $\mathscr{G}^r_d(\mathscr{X}/B)$ will be denoted by $\mathscr{L}$ and $\mathscr{L}(t)$ by $\mathscr{L}_t$, for some $t\in B$.

%Recall that given the smooth, proper family $\mathscr{X}\rightarrow B$, the functor $\mathscr{G}(\mathscr{X}/B)$ associates to $B$ equivalence classes of pairs $\mathscr{L}$

%\begin{defi}\label{def:comtype}
%Let $\pi:\mathscr{X}\rightarrow\Delta$ be a smooth family of curves of genus $g$ with central fibre $\mathscr{X}_0$ a nodal curve of compact type.  Fix a partition $\mu=(a_1,\ldots,a_n)$ of $d$.  
%Let $\mathscr{L}$ be a $\grd$ on $\mathscr{X}$.  We say that $\mathscr{L}$ admits the de Jonqui\`eres divisor $\sum_{i=1}^n a_i p_i$ if for all $t\in\Delta$, the linear series $\mathscr{L}_t$ on $\mathscr{X}_t$ admits the de Jonquières divisor $\sum_{i=1}^n a_i p_i(t)$, i.e. if for all $t\in\Delta$, the map 
%\[ V_t \rightarrow H^0(\mathscr{X}_t,L_t|_{\sum a_i p_i(t)}) \]
%has non-trivial kernel.  We define the set of \textit{relative de Jonquières }divisors as follows
%\[ \mathcal{DJ}_n^{r,d}(\mu,\mathscr{X},\mathscr{L}) = \{ [\mathscr{X}_t;p_1(t),\ldots,p_n(t)] \in \mathscr{X}^n \mid \mathscr{L}\text{ admits the de Jonqui\`eres divisor }\sum_{i=1}^n a_i p_i \}, \]
%where $\mathscr{X}^n$ is the $n$-fold fibre product of $\mathscr{X}\rightarrow\Delta$.
%\end{defi} 
Denote by $\ell$ a $T$-valued point of $\mathscr{G}^r_d(\mathscr{X}/B)(T)$.
In what follows, we construct a functor $\mathcal{DJ}^{r,d}_{k,N}(\mu_1,\mu_2,\mathscr{X},\ell)$, represented by a scheme which is projective over $B$, and which parametrises de Jonquières divisors for a family $\mathscr{X}\rightarrow B$ of curves of genus $g$ of compact type equipped with a linear series $\ell$.  

%More precisely we show that for a family $\mathscr{X}\rightarrow B$ of curves of compact type, the space parametrising pointed curves $[\mathscr{X}_t,p_1(t),\ldots,p_n(t)]$ is a scheme and, moreover, it is also a degeneracy locus.  This extra structure allows us to find a lower bound for the dimension of its irreducible components.

\begin{prop}\label{prop:dejspacecomtype}
Fix a projective, flat family of curves $\mathscr{X}\rightarrow B$ over a scheme $B$ equipped with a linear series $\ell$ of type $\grd$.  Let $\mu_1=(a_1,\ldots,a_k)$ and $\mu_2=(d_1.\ldots,d_k)$
be vectors of positive integers such that $\sum_{i=1}^k a_i d_i = d$. As usual, let $N=\sum_{i=1}^k d_i$.  Consider also the
relative divisors $\mathscr{D}_i\subset\mathscr{X}^{d_i}$.
There exists a scheme $\mathcal{DJ}^{r,d}_{k,N}(\mu_1,\mu_2,\mathscr{X},\ell)$ projective over $B$, compatible with base change, whose point over every $t\in B$ parametrises pointed curves $[\mathscr{X}_t, \mathscr{D}_1(t),\ldots,\mathscr{D}_k(t)]$ such that $\sum_{i=1}^k a_i \mathscr{D}_i(t)$ is a de Jonqui\`eres divisor of $\ell_t$.
Furthermore, every irreducible component of $\mathcal{DJ}^{r,d}_{k,N}(\mu_1,\mu_2,\mathscr{X},\ell)$ has dimension at least $\dim B +N- d+r$.
\end{prop}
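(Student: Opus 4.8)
The plan is to relativise over $B$ the degeneracy-locus construction of Section~\ref{sec:deglocus}, using Osserman's representable functor $\mathscr{G}^r_d(\mathscr{X}/B)$ to organise the linear series $\ell$ and its aspects across the components of the fibres, and reading off the de Jonqui\`eres condition one aspect at a time as prescribed by Definition~\ref{def:comtype}. Over the open part of $B$ where $\mathscr{X}_t$ is smooth there is nothing to add: one simply copies the morphism $\Phi\colon\mathcal{E}\to\mathcal{F}_d(L)$ of Section~\ref{sec:deglocus} in families, replacing $L$ by the relative line bundle and $V$ by the relative subbundle $\mathscr{V}$. All the genuine content is therefore concentrated on the compact-type fibres, where the single bundle must be replaced by the tuple of twists $\mathscr{L}^{\vec{d}^v}$ and aspects $\mathscr{V}^v$.

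Concretely, I would proceed as follows. The relative divisors $\mathscr{D}_i$ determine a relative divisor $\mathscr{D}=\sum_i a_i\mathscr{D}_i$ of degree $d$ on $\mathscr{X}$, and for each vertex $v$ of the dual graph of the most degenerate fibre I enlarge the part of $\mathscr{D}$ lying on the component $\mathscr{Y}^v$ by the nodes of $\mathscr{Y}^v$, each carrying the multiplicity forced by equality in (\ref{eq:vanish}); this produces a relative length-$d$ subscheme $\mathscr{D}_v\subset\mathscr{Y}^v$. Evaluating the aspect $\mathscr{V}^v$ against $\mathscr{D}_v$ after the canonical twist $\mathscr{L}^{\vec{d}^v}$ gives, by cohomology and base change, a morphism $\mathscr{V}^v\to\pi_*\bigl(\mathscr{L}^{\vec{d}^v}\otimes\oo_{\mathscr{D}_v}\bigr)$ of bundles of ranks $r+1$ and $d$ over $B$. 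The essential point is then to assemble these aspectwise maps, by means of Osserman's twist maps $f_{\vec{d},\vec{d}^v}$, into a \emph{single} morphism $\Phi\colon\mathcal{E}\to\mathcal{F}$ of bundles of ranks $r+1$ and $d$, exactly as Osserman packages the several aspects into the one degeneracy condition defining $\mathscr{G}^r_d(\mathscr{X}/B)$. I define $\mathcal{DJ}^{r,d}_{k,N}(\mu_1,\mu_2,\mathscr{X},\ell)$ to be the $r$-th degeneracy locus $\{\rk\Phi\leq r\}$. Being the $r$-th degeneracy locus of a morphism of vector bundles over $B$, it is a closed subscheme of $B$, hence projective over $B$.

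Compatibility with base change is immediate, since both the formation of $\mathcal{F}$ (cohomology and base change, the fibres of $\mathscr{L}^{\vec{d}^v}\otimes\oo_{\mathscr{D}_v}$ having constant length $d$) and the passage to a degeneracy locus commute with arbitrary base change; the independence of the auxiliary choices---the vertex $v_0$, the twist maps and the open cover---is exactly the content of Remark~\ref{rem:functor} and the construction of loc.\ cit., which I would invoke rather than reprove. For the fibres: over a smooth $\mathscr{X}_t$ the locus $\{\rk\Phi\leq r\}$ is by construction the absolute space of de Jonqui\`eres divisors of $\ell_t$, whereas over a compact-type $\mathscr{X}_t$ the rank condition says precisely that each aspect $l_{Y^v}$ carries a section vanishing on the part of $\sum_i a_i\mathscr{D}_i(t)$ supported on $Y^v$ together with the forced zeros at the nodes, which is Definition~\ref{def:comtype}. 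Finally the dimension bound is the relative form of Lemma~\ref{lemma:dimbound}: the $r$-th degeneracy locus of a map of bundles of ranks $r+1$ and $d$ has codimension at most $(r+1-r)(d-r)=d-r$, so every component of $\mathcal{DJ}^{r,d}_{k,N}(\mu_1,\mu_2,\mathscr{X},\ell)$ has dimension at least $\dim B-d+r$.

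I expect the main obstacle to be precisely the assembly step, namely verifying that Osserman's twist maps translate the node compatibility (\ref{eq:vanish}) into the correct vanishing of $\Phi$ along the nodes, so that the aspectwise conditions combine into one rank condition of the expected codimension $d-r$ rather than over-imposing across components, and that the resulting fibrewise locus agrees on the nose with Definition~\ref{def:comtype}. The subtlety is that the node multiplicities attached to $\mathscr{D}_v$ depend on how the $\mathscr{D}_i$ distribute among the components, so one must check that the construction is stable under the specialisations occurring in the family; once this is in place, everything else is a relative repackaging of Section~\ref{sec:deglocus} together with the standard determinantal codimension estimate.
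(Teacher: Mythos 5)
Your overall strategy---relativising the degeneracy-locus construction of Section \ref{sec:deglocus} over Osserman's functor, with the smooth-fibre case handled by the rank condition on $\mathscr{V}\to\pi_{2*}\mathscr{L}|_{\sum_i a_i\mathscr{D}_i}$---is exactly the paper's, and that part of your argument is fine. The genuine gap is the step you yourself flag as the ``essential point'': the assembly of the aspectwise evaluations into a \emph{single} morphism $\Phi\colon\mathcal{E}\to\mathcal{F}$ of bundles of ranks $r+1$ and $d$ over $B$. You never construct it, and there is a structural reason to doubt it exists: on a compact-type fibre, a section of the twist $\mathscr{L}^{\vec{d}^v}$ vanishes identically on every component other than $\mathscr{Y}^v$, so evaluating it against the full degree-$d$ divisor (divisor parts plus node orders) detects only the condition for the single aspect $v$. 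A single rank-$\leq r$ degeneracy condition therefore under-imposes---it encodes one aspect, not the required conjunction over all vertices of $\Gamma_0$---while enlarging the target to $\bigoplus_v$ of the aspectwise targets destroys the rank-$d$ bookkeeping on which your codimension estimate $(r+1-r)(d-r)=d-r$, and hence your entire dimension bound, rests. Nor does refinedness in (\ref{eq:vanish}) rescue you: it guarantees that \emph{some} section of the neighbouring aspect has the complementary vanishing order at the node, not that it also vanishes on the specialisations of the $\mathscr{D}_i$ there, so the per-vertex conditions are genuinely independent and must each be imposed.

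The paper avoids the assembly altogether: in the nodal case it requires, \emph{separately for each vertex} $v_j$, that the $r$-th degeneracy locus of $\mathscr{V}^{v_j}\to\pi_{2*}\mathscr{L}^{\vec{d}^{v_j}}|_{a_i\mathscr{D}_{i,j}}$ be all of $T$, and it obtains the dimension bound not from a determinantal estimate but from an explicit alternative construction: for each vertex a subscheme $\mathcal{DJ}'(\mathscr{Y}^v,\mathscr{V}^v)\subset\p\mathscr{V}^v$ cut out by the vanishing equations at the divisors and at the nodes (subject to the constraints following Definition \ref{def:comtype}), followed by a fibre product over $B$. Two further points. First, your appeal to Remark \ref{rem:functor} for base-change compatibility is insufficient: that remark covers only families in which no nodes are smoothed, whereas base change can alter the maximally degenerate dual graph $\Gamma_0$, and the paper explicitly notes this delicacy and argues as in Proposition 4.5.6 of \autocite{Osbook}. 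Second, the node multiplicities in your enlarged divisor $\mathscr{D}_v$ are dictated by the degree distribution $d-\sum_i a_i d_{i,v}$ of the specialised divisors, which jumps along strata of $B$, so the rank-$d$ bundle $\pi_*(\mathscr{L}^{\vec{d}^v}\otimes\oo_{\mathscr{D}_v})$ you propose to evaluate against is not globally defined over $B$ in the first place. To repair the proof, drop the single assembled $\Phi$ and impose the conditions vertex by vertex, deriving the dimension bound from the equations-in-$\p\mathscr{V}^v$ count and the fibre product, as the paper does.
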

\begin{proof}
 We construct the functor $\mathcal{DJ}^{r,d}_{k,N}(\mu_1,\mu_2,\mathscr{X},\ell)$ as a subfunctor of the functor of points of the fibre product $\mathscr{X}^N$ over $B$.  We show that it is representable by a scheme that is projective over $B$ and which we also denote by $\mathcal{DJ}^{r,d}_{k,N}(\mu_1,\mu_2,\mathscr{X},\ell)$.  

Let $T\rightarrow B$ be a scheme over $B$.
Suppose first that all the fibres of the family are nonsingular.  In this case, from the discussion above, a $\grd$ on $\mathscr{X}$ is given by a pair $(\mathscr{L},\mathscr{V})$, where $\mathscr{V}\subseteq\pi_{2*}\mathscr{L}$ is a vector bundle of rank $r+1$ on $B$.  Then the $T$-valued point $[\mathscr{X},\mathscr{D}_1,\ldots,\mathscr{D}_k]$ belongs to $\mathcal{DJ}^{r,d}_{k,N}(\mu_1,\mu_2,\mathscr{X},\ell)(T)$ if the $r$-th degeneracy locus of the map
\[ \mathscr{V} \rightarrow \pi_{2*}\mathscr{L}|_{\sum_{i=1}^k a_i \mathscr{D}_i } \]
is the whole of $T$.  By construction $\mathcal{DJ}^{r,d}_{k,N}(\mu_1,\mu_2,\mathscr{X},\ell)$ is compatible with base change, so it is a functor, and it has the structure of a closed subscheme, hence it is representable and the associated scheme is projective.  

Alternatively, more explicitly, take the projective bundle $\p\mathscr{V}$ corresponding to $\mathscr{V}$ which has rank $r$, with elements in its fibres given by sections $\sigma\in H^0(\mathscr{L}|_{\mathscr{X}_t})$ up to equivalence with respect to scalar multiplication.  Consider the subscheme $\mathcal{DJ}'(\mathscr{X},\mathscr{V})$ in $\p\mathscr{V}$ cut by the equations coming from the condition that the sections vanish on $\mathscr{D}_i$ with multiplicity at least $a_i$.  This imposes in total $\sum_{i=1}^k a_i d_i = d$ conditions, so the dimension of every irreducible component of $\mathcal{DJ}'(\mathscr{X},\mathscr{V})$ is at least $\dim B +N- d + r$.
Collecting all irreducible components of $\mathcal{DJ}'(\mathscr{X},\mathscr{V})$ such that the section $\sigma$ does not vanish on the whole underlying curve, we obtain the desired $\mathcal{DJ}^{r,d}_{k,N}(\mu_1,\mu_2,\mathscr{X},\ell)$. 

Now suppose that some of the fibres have nodes (that may or may not be smoothed by $\mathscr{X}$ - see Remark \ref{rem:functor}).  From the discussion above, a $\grd$ on $\mathscr{X}$ is a tuple $(\mathscr{L},(\mathscr{V}^v)_{v\in V(\Gamma_0)})$.  Let $v_j\in \Gamma_0$ be the vertex corresponding to an irreducible component.  Denote by $\mathscr{D}_{i,j}$ the specialisation of $\mathscr{D}_i$ to $v_j$.  Then the $T$-valued point $[\mathscr{X},\mathscr{D}_1,\ldots,\mathscr{D}_k]$ belongs to $\mathcal{DJ}_{k,N}^{r,d}(\mu_1,\mu_2,\mathscr{X},\ell)(T)$ if, for all vertices $v_j$, the $r$-th degeneracy locus of the map
\[\mathscr{V}^{v_j} \rightarrow \pi_{2*}\mathscr{L}^{\vec{d^{v_j}}}|_{a_i \mathscr{D}_{i,j}}\]
is the whole of $T$.  Checking for compatibility with base change (and hence functoriality) is more delicate than in the previous case because the base change may change the graph $\Gamma_0$.  However, arguing like in the proof of Proposition 4.5.6 in loc.cit.~yields the desired property.
Representability and projectiveness then follow analogously.

%$\mathcal{DJ}_n^{r,d}(\mu,\mathscr{X},\mathscr{L})$ consists of those sections of $\mathscr{L}$ such that, for all $i$,
%\[ \ker\left( \mathscr{V}^{v_i} \rightarrow \pi_{2*}\mathscr{L}^{\vec{d^{v_i}}}|_{a_i p_i} \right) \]
%is non-empty.

Alternatively, if no nodes are smoothed in $\mathscr{X}$, for each vertex $v$ of the dual graph $\Gamma$ of the fibres, we have a family $\mathscr{Y}^v$ of smooth curves with the divisors $\mathscr{D}_i$ belonging to $\mathscr{Y}^v$ and additional sections $q_j$ corresponding to the preimages of the nodes.  
Consider now the space $\mathcal{DJ}'(\mathscr{Y}^v,\mathscr{V}^v)$ defined as in the case of families with smooth fibres by the vanishing at the $\mathscr{D}_i$.  In addition, we cut 
$\mathcal{DJ}'(\mathscr{Y}^v,\mathscr{V}^v)$ with the equations corresponding to the vanishing of the sections at the points $q_j$, subject to the constraints explained in the discussion following Definition \ref{def:comtype}.  We denote the space thus obtained by $\mathcal{DJ}'(\mathscr{Y}^v,\mathscr{V}^v)$ as well.
Finally, the desired space $\mathcal{DJ}_{k,N}^{r,d}(\mu_1,\mu_2,\mathscr{X},\ell)$ is obtained by taking the fibre product over $B$ of the $\mathcal{DJ}'(\mathscr{Y}^v,\mathscr{V}^v)$.  The dimension estimate follows as in the case of smooth fibres.
If there are smoothed nodes, for each $v\in V(\Gamma_0)$, consider the subscheme $\mathcal{DJ}'(\mathscr{X},\mathscr{V}^v)$ in $\p\mathscr{V}^v$ cut by the vanishing conditions at the divisors $\mathscr{D}_i$ and at the nodes.  Taking the fibre product over $B$ yields the space $\mathcal{DJ}_{k,N}^{r,d}(\mu_1,\mu_2,\mathscr{X},\ell)$ and the dimension bound.
\end{proof}

\begin{rem}\label{rem:fibre}
Let $\phi:\mathcal{DJ}_{k,N}^{r,d}(\mu_1,\mu_2,\mathscr{X},\ell)\rightarrow\mathscr{X}$ be the forgetful map, which is projective by base change.  Then the fibre of $\phi$ over a curve $\mathscr{X}_t$ is precisely
$DJ_{k,N}^{r,d}(\mu_1,\mu_2,\mathscr{X},\ell)$.
\end{rem}

To conclude the study of the space $\mathcal{DJ}_{k,N}^{r,d}(\mu_1,\mu_2,\mathscr{X},\ell)$ of de Jonquières divisors for a family of curves, we investigate their smoothability.

\begin{prop}
Suppose the pointed curve $[C,D_1,\ldots,D_k]\in B$ is contained in an irreducible component $U\subset \mathcal{DJ}_{k,N}^{r,d}(\mu_1,\mu_2,\mathscr{X}/B,\ell)$ with $\dim U = \dim B+N - d +r$.  Then the general point of $U$ parametrises a de Jonquières divisor on a smooth curve.
\end{prop}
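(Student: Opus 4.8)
The plan is to examine the projection $\psi\colon \mathcal{DJ}_{k,N}^{r,d}(\mu_1,\mu_2,\mathscr{X},\ell)\to B$, which is projective by Proposition \ref{prop:dejspacecomtype}, and to rule out the possibility that $U$ sits entirely over the locus of singular fibres. Let $B_0\subseteq B$ be the discriminant, i.e.~the locus of $t\in B$ for which $\mathscr{X}_t$ is nodal; since $\mathscr{X}\to B$ is a smoothing family with smooth general fibre, $B_0$ is a proper closed subset and $\dim B_0\le\dim B-1$. By Remark \ref{rem:fibre} the fibre $\psi^{-1}(t)$ is the single-curve space $DJ_{k,N}^{r,d}(\mu_1,\mu_2,\mathscr{X}_t,\ell_t)$, so that $\dim U=\dim B+(N-d+r)$ realises the expected dimension. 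The conclusion that the general point of $U$ lies on a smooth curve is equivalent to $U\not\subseteq\psi^{-1}(B_0)$: as $U$ is irreducible, once it meets the open set $\psi^{-1}(B\setminus B_0)$ this intersection is automatically dense in $U$. Everything therefore reduces to the dimension estimate $\dim\psi^{-1}(B_0)<\dim U$.

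To obtain this estimate I would bound the fibres of $\psi$ over $B_0$ and add the base dimension, using that $\psi^{-1}(B_0)$ is itself the relative de Jonquières space of the restricted compact-type family $\mathscr{X}|_{B_0}\to B_0$. This gives $\dim\psi^{-1}(B_0)\le\dim B_0+\max_{t\in B_0}\dim DJ_{k,N}^{r,d}(\mu_1,\mu_2,\mathscr{X}_t,\ell_t)$. The decisive input is the \emph{sharp} bound $\dim DJ_{k,N}^{r,d}(\mu_1,\mu_2,\mathscr{X}_t,\ell_t)\le N-d+r$ for $t\in B_0$, i.e.~that a nodal curve of compact type carries a de Jonquières space of at most the expected dimension. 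Granting this, $\dim\psi^{-1}(B_0)\le\dim B_0+(N-d+r)\le(\dim B-1)+(N-d+r)$, which is strictly less than $\dim U=\dim B+(N-d+r)$, because passing from $B$ to the proper subvariety $B_0$ strictly drops the base dimension while the fibre dimension is unchanged. Combined with the reduction of the first paragraph, this finishes the argument.

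The hard part, and the step I expect to be the main obstacle, is precisely the sharp nodal fibre bound. Here I would appeal to the limit linear series description of Definition \ref{def:comtype}: a de Jonquières divisor $\sum_i a_i D_i$ for $\ell_t$ restricts on each component $Y$ of $\mathscr{X}_t$ to the de Jonquières divisor $\sum_{D_{i,Y}\in Y}a_iD_{i,Y}+\sum_q\bigl(d-\sum_i a_id_{i,Y}\bigr)q$ of the aspect $l_Y$, where the $q$ run over the nodes on $Y$ and the node multiplicities are rigidly pinned down by the refined equalities in (\ref{eq:vanish}). Since the node contributions are fixed, the only moduli are the divisors on the separate smooth, lower-genus components; applying the expected-dimension estimate componentwise (this is exactly where the induction underlying Theorem \ref{dimension} feeds in) and summing over the dual graph of $\mathscr{X}_t$ caps the total at $N-d+r$. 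The subtle point to verify is that no extra dimension is gained by varying how the length $N$ and the degree $d$ redistribute among the components, which is controlled by the rigidity of the vanishing orders at the nodes forced by (\ref{eq:vanish}); one must also check uniformity of the bound in $t\in B_0$ so that it can be combined with $\dim B_0\le\dim B-1$. Once this bound is secured, irreducibility of $U$ and the strict inequality of the previous paragraph show that the generic point of $U$ lies over $B\setminus B_0$, i.e.~parametrises a de Jonquières divisor on a smooth curve, as claimed.
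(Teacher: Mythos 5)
Your proposal inverts the logical structure of the paper's argument, and the step you yourself flag as ``the main obstacle'' is in fact a genuine gap, not merely a hard lemma. You need a \emph{sharp upper bound} $\dim DJ_{k,N}^{r,d}(\mu_1,\mu_2,\mathscr{X}_t,\ell_t)\le N-d+r$ uniformly over all nodal fibres $t\in B_0$, and you propose to obtain it by applying ``the expected-dimension estimate componentwise'' via the induction behind Theorem \ref{dimension}. But that induction only bounds the space for a \emph{general} curve with a \emph{general} linear series, and in Sections \ref{sec:step1}--\ref{sec:step2} only for very specially constructed two-component degenerations whose aspects are general series on general pointed curves. The fibres of an arbitrary family over $B_0$ are arbitrary compact-type curves carrying arbitrary limit linear series, whose aspects need not be general in any sense; for such fibres the de Jonquières space can have excess dimension, and invoking Theorem \ref{dimension} here is circular, since the dimension theorem is itself deduced from degeneration arguments that rely on this smoothability machinery. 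The whole point of the proposition (modelled on Theorem 3.4 of \autocite{EH86}) is to avoid any fibrewise upper bound: the paper passes to the versal deformation $\widetilde{\mathscr{X}}\to\widetilde{B}$ of the pointed curve $[C,D_1,\ldots,D_k]$, where the discriminant is a hypersurface $\widetilde{B}'$, takes a component $\widetilde{U}$ with $U\subset f^{*}\widetilde{U}$, and argues that if $\widetilde{U}$ lay entirely over $\widetilde{B}'$ then the \emph{universal lower bound} of Proposition \ref{prop:dejspacecomtype} gives $\dim\widetilde{U}\ge\dim\widetilde{B}-d+r>\dim\widetilde{B}'-d+r$, an excess relative to its actual base which pulls back to force $\dim U>\dim B-d+r$, contradicting the hypothesis $\dim U=\dim B-d+r$. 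Thus the expected-dimension assumption on $U$ is precisely what substitutes for your missing nodal bound.

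Two further problems. First, you assert that $B_0$ is a proper closed subset of $B$ ``since $\mathscr{X}\to B$ is a smoothing family with smooth general fibre,'' but the proposition makes no such assumption: in the intended applications the marked curve is nodal and $B$ may lie entirely inside the discriminant, in which case $\dim B_0\le\dim B-1$ fails and your stratification argument collapses at the first step. This is exactly why the paper replaces $B$ by the versal base $\widetilde{B}$, where smoothing directions always exist and each discriminant component is an honest hypersurface. Second, your bookkeeping $\dim U=\dim B+(N-d+r)$ contradicts the stated hypothesis $\dim U=\dim B-d+r$: in Proposition \ref{prop:dejspacecomtype} the relative divisors $\mathscr{D}_i\subset\mathscr{X}^{d_i}$ are part of the data over $B$, so the expected dimension is $\dim B-d+r$ with no extra $N$ parameters in the fibre of your projection $\psi$ (the fibre described in Remark \ref{rem:fibre} is that of the forgetful map to the family of curves, which discards the divisor data). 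In short, the proof as written assumes an upper bound it cannot have and a properness of the discriminant it is not given, whereas the paper's argument needs neither.
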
 
\begin{proof}
We essentially follow the argument in the proof of Theorem 3.4 of \autocite{EH86}. 

Let $\widetilde{\mathscr{X}}\rightarrow\widetilde{B}$ be the versal family of pointed curves around $[C,D_1,\ldots,D_k]$ and let $f:B\rightarrow\widetilde{B}$ be the map inducing $\pi:\mathscr{X}\rightarrow B$ with sections corresponding to the marked points.  Moreover, let $\widetilde{\mathscr{L}}$ be the corresponding linear series on $\widetilde{\mathscr{X}}$.  Let $\widetilde{U}\subset \mathcal{DJ}_{k,N}^{r,d}(\mu_1,\mu_2,\widetilde{\mathscr{X}}/\widetilde{B},\widetilde{\ell})$ be a component such that $U\subset f^*\widetilde{U}$ and denote by $\widetilde{C}$ the point of $\widetilde{U}$ corresponding to $C$.  By Proposition \ref{prop:dejspacecomtype}, $\dim\widetilde{U}\geq \dim\widetilde{B}+N - d + r$.  Hence, if $\widetilde{U}$ does not completely lie in the discriminant locus of $\widetilde{X}\rightarrow\widetilde{B}$ which parametrises nodal curves, then a general point of $\widetilde{U}$ corresponds to a de Jonquières divisor on a smooth curve.  On the other hand, if $\widetilde{U}$ lies over a component $\widetilde{B}'$ of the discriminant locus, then 
\[ \dim\widetilde{U}\geq \dim \widetilde{B}+N - d+r > \dim \widetilde{B}'+N - d+r, \]
since $\widetilde{B}'$ is a hypersurface in $\widetilde{B}$.  Therefore every component of $f^*\widetilde{U}$ (hence also $U$) must have dimension strictly larger than $\dim B+N -d+r$ which contradicts the assumption on $\dim U$.  Hence $\widetilde{U}$ cannot lie entirely in the discriminant locus, and we are done.
\end{proof}

\subsection{De Jonquières divisors on nodal stable curves}\label{sec:compactified}
\indent Consider now a smooth 1-parameter family $\pi:\mathscr{X}\rightarrow B$ of curves of genus $g$ over the one-dimensional scheme $B$ such that the fibres over $B^*=B\setminus 0$ are smooth curves, while the special fibre is given by a stable nodal curve $\mathscr{X}_0$.   
Denote by $I(\mathscr{X}_0)$ the set of all irreducible components of the central fibre and by $N(\mathscr{X}_0)$ the set of nodes lying at the intersection of distinct irreducible components, together with their respective supports, i.e.
\[ N(\mathscr{X}_0) = \{ (q,C) \mid q\in C\cap C'\text{ where }C,C'\in I(\mathscr{X}_0) \}. \]
 Suppose that $\mathscr{L}^*$ is a line bundle on $\mathscr{X}^*$ such that the restriction $\mathscr{L}_t$ to each fibre $\mathscr{X}_t$ is of degree $d$ for all $t\in B^*$.  Then, using Caporaso's approach \autocite{Ca} we can extend $\mathscr{L}^*$ over the central fibre $0\in B$ such that the fibre $\mathscr{L}_0$ is a limit line bundle on $\mathscr{X}_0$ (or possibly a quasistable curve of $\mathscr{X}_0$) of degree $d$.   
As observed before, this limit is not unique because, for any $m_C\in\mathbb{Z}$,
\begin{equation}\label{def:twistbundle}
\mathscr{L}\otimes \oo_{\mathscr{X}}\Bigl(\sum_{C\in I(\mathscr{X}_0)} m_C C \Bigr)\end{equation} is also an extension of $\mathscr{L}^*$ to $B$.  We call the new extension in (\ref{def:twistbundle}) a \textit{twisted line bundle}.  Observe also the following ``computation'' rules
\begin{align}\label{eq:rules}
\begin{split}
\oo_{\mathscr{X}}&\simeq\oo_{\mathscr{X}}\Bigl(\sum_{C\in I(\mathscr{X}_0)} C\Bigr)\\
\oo_{\mathscr{X}}\Bigl(\sum_{C\in I(\mathscr{X}_0)} m_ C C\Bigr)\Bigl|_{C'}&\simeq\oo_{C'}\Bigl(\sum_{q\in C\cap C'} (m_C-m_{C'}) q\Bigr).
\end{split}
\end{align}

We encode this information in a \textit{twist function}:
\begin{align*}
T: N(\mathscr{X}_0) &\rightarrow \mathbb{Z} \\
(q,C) &\mapsto m_{C'} - m_C
\end{align*}
and introduce the following 
\begin{defi}\label{def:twist}
A \textit{twist} of the line bundle $\mathscr{L}$ is a function $T: N(\mathscr{X}_0) \rightarrow \mathbb{Z}$ satisfying the following properties
\begin{enumerate}
	\item Given $C,C'\in I(\mathscr{X}_0)$ and $q\in C\cap C'$, then $T(q,C)=-T(q,C')$.
	\item Given $C,C'\in I(\mathscr{X}_0)$ and $q_1,\ldots,q_n\in C\cap C'$, then 
	\[T(q_1,C)=\ldots=T(q_n,C) = -T(q_1,C')=\ldots=-T(q_n,C).\]
	\item Given $C,C',\widehat{C},\widehat{C'}\in I(\mathscr{X}_0)$, and points $q_C \in C\cap \widehat{C}$, $q_{C'}\in C'\cap\widehat{C'}$, $q\in C\cap C'$, and $\widehat{q}\in \widehat{C}\cap\widehat{C'}$, such that
	\[  T(q_C,C)=T(q_{C'},C')=0, \]
	we have that
	\[ T(q,C)=T(\widehat{q},\widehat{C}). \]
\end{enumerate}
\end{defi}

\begin{rem}
The definition for the twist $T$ of a line bundle $L$ on a single curve $X$ is analogous.
\end{rem}

Let $\underline{d}$ denote the \textit{multidegree} $\underline{d}=(d_C)_{C\in I(\mathscr{X}_0)}$ of $\mathscr{L}_0$ assigning to each irreducible component $C \in I(\mathscr{X}_0)$ the degree of $\mathscr{L}_0|_{C_j}$.    

Since we ultimately want to degenerate de Jonqui\`eres divisors, we introduce markings: 
let the sections $p_1,\ldots,p_n:B\rightarrow\mathscr{X}$ correspond to the $n$ markings on each of the fibres $\mathscr{X}_t$.  
 
We now specify what happens to the limit line bundle $\mathscr{L}_0$.  We rely on Melo's construction of the compactified Picard stack on the moduli stack of curves with marked points described in \autocite{Me}.  The Caporaso compactification emerges as a particular case (where  no markings are present). We chose to work with this compactification (instead of using rank-1 torsion-free sheaves) because we want to use an induction procedure involving restrictions of line bundles on different irreducible components of the nodal curve, as in the arguments from \ref{sec:step1} and \ref{sec:step2}.  Rank-1 torsion-free sheaves would not allow this, since their restrictions to subcurves are not necessarily torsion-free themselves.

We summarise here the most important aspects of this compactification that are relevant to us. Let $X$ be a semistable curve of genus $g\geq 2$ with $n$ marked points.  For a subcurve $X'\subset X$, let $k_{X'} = \#\Bigl( X' \cap \overline{X\setminus X'} \Bigr)$.  A \textit{rational tail} $C$ of $X$ is a rational proper subcurve with $k_{X'} =1$, whereas a \textit{rational bridge} is a rational proper subcurve $X'$ of $X$ satisfying $k_{X'} =2$.  An \textit{exceptional component} of $X$ is a destabilising component without marked points.  Finally the semi-stable curve $X$ is called  \textit{quasi-stable} if the following conditions are satisfied:
\begin{itemize}
	\item all destabilising  components are exceptional;
	\item rational tails do not contain any exceptional components;
	\item each rational bridge contains at most one exceptional component.
\end{itemize}

\begin{defi}
Let $Y$ be a quasi-stable curve (obtained via semi-stable reduction) of the stable curve $X$ of genus $g\geq 2$ with $n$ marked points equipped with a line bunde $L$ of degree $d$.  The multidegree of $L$ is \textit{balanced} if
\begin{enumerate}
	\item If $Y'\subset Y$ is an exceptional component, then $\deg_{Y'} L=1$.
	\item If $Y'$ is a rational bridge, then $\deg_{Y'} L\in\{0,1\}$.
	\item If $Y'$ is a rational tail, then $\deg_{Y'} L=-1$.
	\item If $Y'$ is a proper subcurve whose irreducible components are not contained in any rational tail or bridge, then $\deg_{Y'} L$ must satisfy the following inequality:
	\begin{equation}\label{ineq}\Bigl| \deg_{Y'} L - \frac{d(w_{Y'} - t_{Y'})}{2g-2} -t_{Y'} \Bigr| \leq \frac{k_{Y'} - t_{Y'} - 2b^L_{Y'}}{2},
	\end{equation}
	where $w_{Y'} = 2(g_{Y'} - 2)$, $t_{Y'}$ is the number of rational tails meeting $Y'$, and $b^L_{Y'}$ is the number of rational bridges where the degree of $L$ vanishes and which meet $Y'$ in two points.
\end{enumerate}
Denote by $\overline{P}^X_{d}$ the set of all the pairs $(Y,L)$ of quasi-stable curves $Y$ of $X$ equipped with a balanced line bundle $L$ of degree $d$.  Let $\overline{W}^X_{r,d}\subset\overline{P}^X_{d}$ denote all those pairs where the line bundles satisfy $h^0(Y,L)\geq r+1$.
\end{defi}

The compactification $\overline{P}_{d,g,n}$ of the Picard stack on the moduli stack of stable curves with marked points is given by the line bundles with balanced multidegrees on quasistable curves. 
More pecisely, $\overline{P}_{d,g,n}$ is a smooth and irreducible Artin stack of dimension $4g-3+n$ whose objects over a scheme $B$ are families $(\pi:\mathscr{X}\rightarrow B, p_i: B\rightarrow\mathscr{X}, \mathscr{L})$ of quasi-stable curves of genus $g$ with $n$ marked points equipped with a relative degree $d$ balanced line bundle $\mathscr{L}$.  The stack $\overline{P}_{d,g,n}$ is endowed with a (forgetful) universally closed morphism $\Psi_{d,g,n}$ onto $\overline{\mathcal{M}}_{g,n}$.
If moreover $(d-g+1,2g-2)=1$, the rigidification (in the sense of \autocite{ACV}) of $\overline{P}_{d,g,n}$ is a Deligne-Mumford stack and the morphism $\Psi_{d,g,n}$ is proper. 
For more details, see \autocite{Me} Definition 4.1, Theorem 4.2, and Section 7.

%\begin{rem}
%Since we do not assume that $(d-g+1,2g-2)=1$ nor that $d>>0$, we only use the theory of the compactified Picard locally.
%\end{rem}

In what follows we also need the result below (for a proof, see \cite{Ra} Proposition 6.1.3).

\begin{lemma}\label{lemma:raynaud}
Let $B$ be a smooth curve and let $f:\mathscr{X}\rightarrow B$ be a flat and proper morphism.  Fix a point $b_0\in B$ and set $B^*=B\setminus b_0$.  Let $\mathscr{L}$ and $\mathscr{M}$ be two line bundles on $\mathscr{X}$ such that $\mathscr{L}|_{f^{-1}(B^*)}\simeq\mathscr{M}_{f^{-1}(B^*)}$.  Then
\[  \mathscr{L} = \mathscr{M} \otimes \oo_{\mathscr{X}}(\mathcal{C}), \]
where $\mathcal{C}$ is a Cartier divisor on $\mathscr{X}$ supported on $f^{-1}(b_0)$. 
%(in the sense of EGA IV, 21.4)  
\end{lemma}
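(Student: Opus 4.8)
The plan is to reduce the statement to a triviality assertion for a single line bundle and then convert a generic trivialisation into a Cartier divisor supported on the special fibre. Set $\mathscr{N}=\mathscr{L}\otimes\mathscr{M}^{\vee}$. The hypothesis $\mathscr{L}|_{f^{-1}(B^*)}\simeq\mathscr{M}|_{f^{-1}(B^*)}$ is equivalent to $\mathscr{N}|_{f^{-1}(B^*)}\simeq\oo_{f^{-1}(B^*)}$, that is, to the existence of a nowhere-vanishing section $s_0\in\Gamma(f^{-1}(B^*),\mathscr{N})$. It then suffices to produce a Cartier divisor $\mathcal{C}$ supported on $f^{-1}(b_0)$ with $\mathscr{N}\simeq\oo_{\mathscr{X}}(\mathcal{C})$, since tensoring with $\mathscr{M}$ recovers the claimed identity $\mathscr{L}\simeq\mathscr{M}\otimes\oo_{\mathscr{X}}(\mathcal{C})$.

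First I would record the crucial consequence of flatness: since $B$ is a smooth curve, $b_0$ is a Cartier divisor on $B$ and $\oo_{\mathscr{X},x}$ is flat, hence torsion-free, over the discrete valuation ring $\oo_{B,b_0}$ for every $x\in f^{-1}(b_0)$. Thus a uniformiser at $b_0$ is a nonzerodivisor at each such $x$, so no associated point of $\mathscr{X}$ lies in $f^{-1}(b_0)$. Consequently $f^{-1}(B^*)$ is schematically dense in $\mathscr{X}$; in the applications $\mathscr{X}$ is moreover integral (the total space of a one-parameter degeneration with smooth connected generic fibre), so $f^{-1}(B^*)$ contains the generic point and $s_0$ extends to a nonzero rational section $s$ of $\mathscr{N}$ on all of $\mathscr{X}$.

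The second step is to take $\mathcal{C}:=\operatorname{div}(s)$. For a nonzero rational section of a line bundle on an integral scheme, local trivialisations express $s$ as a nonzero rational function on each chart, and these data assemble into a Cartier divisor $\mathcal{C}$ satisfying $\mathscr{N}\simeq\oo_{\mathscr{X}}(\mathcal{C})$ by the standard correspondence between line bundles equipped with a chosen rational section and Cartier divisors. Because $s_0$ is a unit on $f^{-1}(B^*)$, the section $s$ has neither zeros nor poles there, so $\operatorname{Supp}\mathcal{C}\subseteq\mathscr{X}\setminus f^{-1}(B^*)=f^{-1}(b_0)$, which is exactly the required support condition (note that $\mathcal{C}$ need not be effective, consistent with the statement).

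The only genuine subtlety, and the step I would treat most carefully, is the density/extension step: one must know that the generic trivialisation really does extend to a rational section whose divisor avoids $f^{-1}(B^*)$ entirely. This is precisely where flatness over the regular one-dimensional base $B$ is indispensable, guaranteeing that $\mathscr{X}$ carries no component or embedded point concentrated on the special fibre. When $\mathscr{X}$ is regular one may bypass rational sections altogether and argue via the excision exact sequence $\bigoplus_{Z\subseteq f^{-1}(b_0)}\mathbb{Z}\cdot[Z]\to\operatorname{Cl}(\mathscr{X})\to\operatorname{Cl}(f^{-1}(B^*))\to 0$ for divisor class groups: the hypothesis places $[\mathscr{L}]-[\mathscr{M}]$ in the kernel of the restriction map, hence in the image of the classes of the components of $f^{-1}(b_0)$, producing $\mathcal{C}$ directly. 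I would present the rational-section argument as the main proof, since it requires only that $\mathscr{X}$ be integral, which holds throughout our setting.
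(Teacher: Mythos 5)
Your overall strategy is sound and is the standard direct proof of this fact: reduce to showing that $\mathscr{N}=\mathscr{L}\otimes\mathscr{M}^{\vee}$, trivial over $f^{-1}(B^*)$, is of the form $\oo_{\mathscr{X}}(\mathcal{C})$ with $\mathcal{C}$ supported on the special fibre, by extending the trivializing section to a meromorphic section and taking its divisor. For comparison: the paper gives no proof of its own at all, but simply cites \autocite{Ra}, Proposition 6.1.3, so a self-contained argument like yours is a legitimate, more elementary alternative, and your identification of flatness over the one-dimensional regular base as the essential ingredient (no associated point of $\mathscr{X}$ lies in $f^{-1}(b_0)$, hence $f^{-1}(B^*)$ is schematically dense) is exactly right.

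The genuine weak point is your reduction to the case of \emph{integral} $\mathscr{X}$, which you justify by asserting that in the applications $\mathscr{X}$ is the total space of a degeneration with smooth connected generic fibre. That is not true for the way the lemma is actually used: in the proof of Proposition \ref{prop:closed} it is applied to the sub-family $\mathscr{X}'\rightarrow\Delta$ whose fibres over $\Delta^*$ are connected but typically \emph{reducible} subcurves of the quasi-stable fibres $\mathscr{X}_t$ (indeed, even the ambient family there has nodal, not smooth, fibres over $\Delta^*$), so $\mathscr{X}'$ is reduced but in general not irreducible, and ``nonzero rational section'' does not literally make sense. Fortunately, your own first step already contains the repair, at the cost of replacing the function field by the sheaf of total quotient rings: flatness over the discrete valuation ring $\oo_{B,b_0}$ makes the local equation $t$ of the special fibre a nonzerodivisor at every point, so on an affine chart $V=\operatorname{Spec}A$ trivializing $\mathscr{N}$ one has $\Gamma(V\cap f^{-1}(B^*),\oo)=A_t$, which embeds in the total ring of fractions $K(A)$; the trivializing section is then $g=a/t^{n}$ with $g$ a unit of $A_t$, and the relation $ab=t^{n+m}$ coming from its inverse forces $a$ to be a nonzerodivisor, so $g$ is a \emph{regular} meromorphic function. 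The collection $(V,g)$, with transition data in $\oo^{\times}$, is precisely a Cartier divisor $\mathcal{C}$ with $\oo_{\mathscr{X}}(\mathcal{C})\simeq\mathscr{N}$, supported on $f^{-1}(b_0)$ because $g$ is a unit of $\oo$ on $V\cap f^{-1}(B^*)$. With this modification your proof establishes the lemma in the generality in which it is stated and used; note that your alternative excision argument for the class group suffers from the same restriction, since it needs $\mathscr{X}$ regular (or at least integral and normal), which again is not guaranteed here.
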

 
%\begin{rem}
%Note that if $X$ is a curve of compact type (not necessarily stable), the theory described above reduces to the fact that
%\[\pic^d (X) = \bigsqcup_{\underline{d}}\Bigl( \prod_{C\in I(X)} \pic^{d_C}(C)\Bigr), \]
%where the multidegree of the line bundles is the vector $\underline{d}=(d_C)$. 
%\end{rem}

With all this in mind, we define the notion of de Jonqui\`eres divisors for quasi-stable nodal curves.

\begin{defi}\label{def:dejnodes}
Fix a quasi-stable curve $Y$ of a stable curve $X$ with $n$ marked points $p_1,\ldots,p_n$. 
The line bundle $L$ with balanced multidegree $\underline{d}$ on $Y$ admits a de Jonquières divisor $\sum_{i=1}^n a_ip_i$ if there exists a twist $T$ such that, for all $C\in I(Y)$,
\[ L|_C  = \oo_{C}\Bigl(\sum_{p_i \in C} a_ip_i\Bigr) \otimes \oo_{C}\Bigl(\sum_{q\in C} T(q,C)q \Bigr).  \]
In other words, each restriction of $L$ to the irreducible components $C$ of $Y$ admits the de Jonqui\`eres divisor 
\[ \sum_{p_i\in C}a_i p_i + \sum_{q\in C} T(q,C)q. \]
\end{defi}

\begin{rem}
If $C$ is an exceptional component, then the de Jonquières divisor has only the nodes $q$ in the support.
\end{rem}

\begin{rem}
If any of the coefficients in the divisor above are negative, we find ourselves in the situation described in Section \ref{sec:lorentz}.
\end{rem}

\begin{rem}
Here, our perspective on de Jonquières divisors on quasi-stable curves is naive in the sense that we ignore the precise vanishing or residue conditions at the nodes. In what follows we construct a space that not only contains the closure of the space of smooth curves with marked points and line bundles admitting de Jonquières divisors, but also some ``virtual'' components which we keep, in the same vein as the space of twisted canonical divisor of \autocite{FP}.
\end{rem}

We now define the notion of de Jonquières divisors for a family of stable curves with $n$ marked points.  We work locally so that a Poincaré bundle exists (otherwise we would have to assume that $(d-g+1,2g-2)=1$).

\begin{defi}\label{def:relativedej}
%Let $\mathscr{X}\rightarrow \Delta$ be a flat, proper family of stable curves of genus $g$ with $n$ marked points $p_i(t)$, for $t\in \Delta$ and $i=1,\ldots,n$.  After possibly performing a semi-stable reduction to quasi-stable curves $\epsilon:\mathscr{Y}\rightarrow\mathscr{X}$, fix $\mathscr{L}$ a line bundle of degree $d$ on $\mathscr{Y}$, 
Let $(\pi:\mathscr{X}\rightarrow B, p_i: B\rightarrow\mathscr{X}, \mathscr{L})$ be a flat, proper family of quasi-stable curves of genus $g$ with $n$ marked points equipped with a relative degree $d$ balanced line bundle $\mathscr{L}$
such that $\mathscr{L}_t\in\overline{W}^{\mathscr{X}_t}_{r,d}$.  For a fixed partition $\mu=(a_1,\ldots,a_n)$ of $d$ we say that $\mathscr{L}$ admits the de Jonqui\`eres divisor $\sum_{i=1}^n a_ip_i$ if for all $t\in B$, $\mathscr{L}_t$ admits the de Jonquières divisor $\sum_{i=1}^n a_ip_i(t)$. 
Furthermore, define the locus $\mathcal{DJ}_{g,n,\mu}^{r,d}(B)$ of de Jonquières divisors in $\overline{P}_{d,g,n}$ by
\[\mathcal{DJ}_{g,n,\mu}^{r,d}(B)=\Bigl\{ \bigl(\pi:\mathscr{X}\rightarrow B, p_i: B\rightarrow\mathscr{X}, \mathscr{L}\bigr) \mid   \mathscr{L} \text{ admits the divisor }\sum_{i=1}^n a_i p_i\Bigr\}.\]
\end{defi}

The content of the following proposition is that, for a one-parameter family of quasi-stable curves, the limit of de Jonqui\`eres divisors is itself a de Jonqui\`eres divisor.

\begin{prop}\label{prop:closed}
The locus $\mathcal{DJ}_{g,n,\mu}^{r,d}(B)$ is closed in $\overline{P}_{d,g,n}$.
\end{prop}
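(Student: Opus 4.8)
The plan is to show that $\mathcal{DJ}_{g,n,\mu}^{r,d}(\Delta)$ is closed by proving that the limit of a family of de Jonqui\`eres divisors remains a de Jonqui\`eres divisor. Concretely, I would take a one-parameter family $(\pi:\mathscr{X}\rightarrow\Delta, p_i, \mathscr{L})$ of quasi-stable curves such that the restriction over the punctured disk $\Delta^*$ lies in $\mathcal{DJ}_{g,n,\mu}^{r,d}$, and verify that the central fibre also lies in the locus. Since $\overline{P}_{d,g,n}$ is proper (after rigidification, assuming the coprimality condition, or working locally where the valuative criterion applies), it suffices to check the valuative criterion of properness: given a family over $\Delta^*$ whose members are de Jonqui\`eres, show that the unique limiting balanced line bundle over $0\in\Delta$ is again de Jonqui\`eres in the sense of Definition \ref{def:dejnodes}.

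First I would use the de Jonqui\`eres condition over $\Delta^*$, namely that $\mathscr{L}_t \simeq \oo_{\mathscr{X}_t}(\sum_{i=1}^n a_i p_i(t))$ for all $t\neq 0$. This means the line bundle $\mathscr{M}^* := \oo_{\mathscr{X}^*}(\sum_{i=1}^n a_i p_i)$ agrees with $\mathscr{L}^*$ on $\mathscr{X}^* = \pi^{-1}(\Delta^*)$. The divisor $\sum_i a_i p_i$ extends over the whole family $\mathscr{X}$ because the sections $p_i$ are defined over all of $\Delta$ and specialise to smooth points of the central fibre; thus $\mathscr{M} := \oo_{\mathscr{X}}(\sum_i a_i p_i)$ is a genuine line bundle on $\mathscr{X}$. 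Now both $\mathscr{L}$ and $\mathscr{M}$ restrict to isomorphic bundles over $\Delta^*$, so I would invoke Lemma \ref{lemma:raynaud}: there is a Cartier divisor $\mathcal{C}$ supported on the central fibre $\pi^{-1}(0) = \mathscr{X}_0$ with $\mathscr{L} = \mathscr{M}\otimes\oo_{\mathscr{X}}(\mathcal{C})$. Since $\mathcal{C}$ is supported on $\mathscr{X}_0$, it has the form $\mathcal{C} = \sum_{C\in I(\mathscr{X}_0)} m_C\, C$ for integers $m_C$, exactly the shape of a twist as in (\ref{def:twistbundle}).

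Next I would restrict to each irreducible component $C$ of the central fibre and apply the computation rules (\ref{eq:rules}). Restricting $\mathscr{L} = \mathscr{M}\otimes\oo_{\mathscr{X}}(\sum m_C C)$ to $C$ gives
\[
\mathscr{L}_0|_C \simeq \oo_C\Bigl(\sum_{p_i\in C} a_i p_i\Bigr)\otimes \oo_C\Bigl(\sum_{q\in C}(m_{C'}-m_C)\,q\Bigr),
\]
where the second factor comes from the second rule in (\ref{eq:rules}) applied at each node $q\in C\cap C'$. Setting $T(q,C) := m_{C'}-m_C$, I would check that this $T$ satisfies the three axioms of Definition \ref{def:twist}: antisymmetry (axiom 1) is immediate from the formula, the constancy along all nodes between two fixed components (axiom 2) follows because $T(q,C)$ depends only on $m_C$ and $m_{C'}$ and not on which node $q$ is chosen, and the compatibility condition (axiom 3) follows from the global nature of the single twisting divisor $\mathcal{C}$. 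This is precisely the statement that $\mathscr{L}_0$ admits the de Jonqui\`eres divisor $\sum_i a_i p_i$ in the sense of Definition \ref{def:dejnodes}, so the central fibre lies in $\mathcal{DJ}_{g,n,\mu}^{r,d}(\Delta)$, proving closedness.

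The main obstacle I anticipate is verifying axiom 3 of Definition \ref{def:twist}, the cycle-type compatibility condition, purely from the existence of a single twisting divisor $\mathcal{C}$ on $\mathscr{X}$; while it should follow formally because $\mathcal{C}$ is one global divisor and the $m_C$ are honest integers attached to components, one must be careful about the compact-type versus general stable setting and about which nodes are smoothed in the family. A secondary technical point is ensuring $\mathscr{M}$ is a line bundle rather than merely a sheaf: this is where I use that the $p_i$ specialise to \emph{smooth} points of $\mathscr{X}_0$, so the divisor $\sum a_i p_i$ meets the central fibre only at smooth points and defines a Cartier divisor. Once these are settled, the conclusion is immediate, and the properness of $\overline{P}_{d,g,n}$ guarantees that the limiting balanced bundle produced by the valuative criterion is the unique one to which Lemma \ref{lemma:raynaud} applies.
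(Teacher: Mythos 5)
Your core mechanism --- Raynaud's Lemma \ref{lemma:raynaud} producing a Cartier divisor $\mathcal{C}=\sum_C m_C\,C$ supported on the central fibre, followed by the restriction rules (\ref{eq:rules}) to read off a twist $T(q,C)=m_{C'}-m_C$ that automatically satisfies the axioms of Definition \ref{def:twist} --- is exactly the engine of the paper's proof, and your secondary worries (axiom 3, Cartier-ness of $\sum a_i p_i$) are indeed non-issues for the reasons you give. However, there is a genuine gap at your very first step. You take the de Jonqui\`eres condition over $\Delta^*$ to be the \emph{global} isomorphism $\mathscr{L}_t\simeq\oo_{\mathscr{X}_t}\bigl(\sum_i a_i p_i(t)\bigr)$, which implicitly assumes the fibres over $\Delta^*$ are smooth (or at least carry trivial twist). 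But $\mathcal{DJ}_{g,n,\mu}^{r,d}(\Delta)$ is a locus in $\overline{P}_{d,g,n}$, so the valuative criterion must be checked against families whose \emph{generic} fibre is itself a quasi-stable nodal curve; by Definition \ref{def:dejnodes} the condition there is only the componentwise isomorphism
\[ \mathscr{L}_t|_C \simeq \oo_C\Bigl(\sum_{p_i(t)\in C} a_i p_i(t)\Bigr)\otimes\oo_C\Bigl(\sum_{q\in C} T_t(q,C)\,q\Bigr) \]
for some possibly non-trivial twist $T_t$. In that situation $\oo_{\mathscr{X}^*}\bigl(\sum_i a_i p_i\bigr)$ is \emph{not} isomorphic to $\mathscr{L}^*$ over $\Delta^*$, so Lemma \ref{lemma:raynaud} cannot be applied to the whole family as you do, and your argument only proves that limits of \emph{untwisted} (e.g.\ smooth-fibre) de Jonqui\`eres families stay in the locus --- not that the full locus is closed.

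The paper's proof is structured precisely to close this gap. After shrinking $\Delta$ and a base change one arranges constant topological type, a twist $T_t=T$ constant over $\Delta^*$, and no monodromy. The twist $T_0$ on the central fibre is then built in two pieces: at nodes not smoothed by the family it is simply inherited from $T$; at smoothed nodes one groups the components of $\mathscr{X}_0$ into equivalence classes joined by zero-twist non-smoothed nodes, obtaining subfamilies $\mathscr{X}'\rightarrow\Delta$ on which $\mathscr{L}$ agrees over $\Delta^*$ with $\oo_{\mathscr{X}'}\bigl(\sum_i a_i p'_i+\sum_j T q_j\bigr)$, where the $q_j$ are the sections through the nodes connecting $\mathscr{X}'_t$ to its complement (these are \emph{smooth} points of $\mathscr{X}'$, which is what makes the divisor Cartier). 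Only then is Lemma \ref{lemma:raynaud} applied --- subfamily by subfamily --- to produce the integers $m_{C_0}$ and hence $T_0$ at the smoothed nodes. In short: your proof is the paper's argument in the special case of a single equivalence class with trivial boundary twist; to get the proposition as stated you must add the reduction steps (constant type, constant twist, no monodromy) and run your Raynaud argument on each zero-twist subfamily with the node-sections $q_j$ included in the twisting divisor.
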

\begin{proof}
We use the valuative criterion.  Take a map $\iota$ from $B^*$ to $\mathcal{DJ}_{g,n,\mu}^{r,d}(B)$.  We must show that there exists a lift $\bar{\iota}$ of $\iota$ from $B$, as shown in the commutative diagram below.

\begin{figure}[H]\centering
  \begin{tikzpicture}
    \matrix (m) [matrix of math nodes,row sep=2em,column sep=1em,minimum width=1em]
  {
     B^* & \mathcal{DJ}_{g,n,\mu}^{r,d}(B) \\
     B &  \\};
  \path[-stealth]
	(m-1-1) edge [right hook ->] (m-2-1)
    (m-1-1) edge node [above] {$\iota$}  (m-1-2)
    (m-2-1) edge [dashed,->] node [above] {$\bar{\iota}$} (m-1-2);
 \end{tikzpicture}
 \end{figure}
\noindent Since a map from $B^*$ to $\mathcal{DJ}_{g,n,\mu}^{r,d}(B)$ is the same as a family  $(\pi:\mathscr{X}^*\rightarrow B^*, p_i: B^*\rightarrow\mathscr{X}^*, \mathscr{L}^*\bigr)$,  we must show that we can extend this to a family $(\pi:\mathscr{X}\rightarrow B, p_i: B\rightarrow\mathscr{X}, \mathscr{L}\bigr)$ in $\mathcal{DJ}_{g,n,\mu}^{r,d}(B)$.
In other words, we must show that if the general fibre $(\mathscr{X}_t,p_i(t),\mathscr{L}_t)$, for $t\in B^*$, is such that $\mathscr{L}_t$ admits the de Jonquières divisor $\sum_{i=1}^n a_i p_i(t)$, then the central fibre $(\mathscr{X}_0,p_i(0),\mathscr{L}_0)$ is such that $\mathscr{L}_0$ also admits the de Jonquières divisor $\sum_{i=0}^n a_i p_i(0)$.  

%Recall that the line bundle on $\mathscr{X}$ is obtained by possibly substituting the fibres by  quasistable curves $\mathscr{Y}_t$ of $\mathscr{X}_t$, for $t\in\Delta$ via semistable reduction 
%\[ \epsilon:\mathscr{Y} \rightarrow \mathscr{X} \rightarrow \Delta. \]
%Denote the corresponding line bundle on $\mathscr{Y}$ by $\widetilde{\mathscr{L}}:=\epsilon^*\mathscr{L}$.  
From Definition \ref{def:dejnodes}, the family admits de Jonquières divisors if there exists a twist $T_t$ for each fibre $\mathscr{X}_t$, with $t\in B^*$, such that, for all components $C\in\mathscr{X}_t$ and all nodes $q\in C$,
\[ \mathscr{L}_t|_C \simeq \oo_C \Bigl( \sum_{p_i(t)\in C} a_i p_i(t) \Bigr) \otimes \oo_C\Bigl( \sum_{q\in C}T_t(q,C)q \Bigr). \]
By shrinking $B$, and after possibly performing a base change, we may assume that the fibres of $\mathscr{X}$ are of constant topological type, the twist $T_t$ is the same twist $T$ over $B^*$, and there is no monodromy in the components of the fibres over $B^*$.  We must now assign a twist $T_0$ to the central fibre $\mathscr{X}_0$ equipped with $\mathscr{L}_0$.

Recall that the twist $T_0$ is a function $T_0:N(\mathscr{X}_0)\rightarrow\mathbb{Z}$.  There are two types of elements in $N(\mathscr{X}_0)$:
\begin{itemize}
	\item  $(q_0,C_0)$ where $q_0$ is a node not smoothed by the family $\mathscr{Y}$.  Here $T_0(q_0,C_0)=T(q_t,C_t)$, where $q_t$ is the corresponding node in the component $C_t$ in the generic fibre over $t\in B$.
	\item $(q_0,C_0)$ where $q_0$ is smoothed by the family $\mathscr{X}$.  Here the twist $T_0$ must be assigned ``by hand''.  
\end{itemize}	
	To do so, note also that the component $C_0\in I(\mathscr{X}_0)$ belongs to a connected subcurve $X$ of $\mathscr{X}_0$ which consists of all components belonging to the same equivalence class with respect to twists at the non-smoothed nodes, i.e.
	\[ C_0, C'_0\in X \Leftrightarrow C_0 \sim C'_0 \Leftrightarrow T(q,C_0) = T(q,C'_0) = 0, \forall q\in C_0\cap C'_0. \]
	This yields a sub-family of $\mathscr{X}\rightarrow B$, which we call $\mathscr{X}'$, whose central fibre is $X$ and whose generic fibre is given by the corresponding subcurves in $\mathscr{X}_t$.
	The markings $p_i$ which lie on the fibres of $\mathscr{X}'$ give sections which we rename $p'_i:B\rightarrow\mathscr{X}'$, for  $i=1,\ldots,n'$, where $n'\leq n$.
The nodes connecting $\mathscr{X}'_t$ to its complement in $\mathscr{X}_t$ also yield sections
$q_j:B\rightarrow\mathscr{X}'$, for $q=1,\ldots,m$, for some $m\geq 1$; we emphasize here that the $q_j(t)$ are smooth points of $\mathscr{X}'$.  Since the twist $T$ at the $q_j(t)$ is non-zero (by the definition of the equivalence classes), for $t\in B^*$ and for any component $C_t\in I(\mathscr{X}'_t)$,
\[\mathscr{L}_t|_{C_t} \simeq \oo_{C_t}\Bigl( \sum_{p'_i(t)\in C_t}a_i p'_i(t) + \sum_{q_j(t)\in C_t}T(q_j(t),C_t)q_j(t) \Bigr). \]	
	
By our previous assumptions, $T(q_j(t),C_t)$ is constant for $t\in B$, so in what follows we omit the terms in the bracket.
Hence the line bundles
\[ \mathscr{L} \text{ and } \oo_{\mathscr{X}'}\biggl(\sum_{i=1}^{n'} a_i p_i + \sum_{j=1}^m Tq_j\biggr) \]
are isomorphic over $B^*$ and they therefore differ by a Cartier divisor $\mathcal{C}$ on $\mathscr{X}'$, supported over $0\in B$.  This Cartier divisor is a sum of irreducible components of the fibre $\mathscr{X}'_0=X$, that is
\[ \mathcal{C}=\sum_{C\in I(X)} m_{C_0} C_0, \text{ with } m_{C_0}\in\mathbb{Z}. \]
Since the non-smoothed nodes of the family $\mathscr{X}'$ all have zero twist, this Cartier divisor yields in fact the definition of the twist $T_0:N(X)\rightarrow \mathbb{Z}$ for a node $q_0\in C_0\cap C'_0$ that is smoothed by $\mathscr{X}'$:
\[ (q_0,C_0) \mapsto m_{C'_0}-m_{C_0}. \]
Putting everything together, we obtain a twist $T_0:N(\mathscr{X}_0)\rightarrow\mathbb{Z}$ which by construction satisfies all the conditions of Definition \ref{def:twist}.  Moreover, for $t=0$,

\[ \mathscr{L}_0|_{C_0}  = \oo_{C_0}\biggl(\sum_{p_i \in C_0} a_ip_i(0)\biggr) \otimes \oo_{C_0}\biggl(\sum_{q_0\in C_0} T_0(q_0,C_0)q \biggr) \]
for each irreducible component $C_0$ of $\mathscr{X}_0$.  By Definition \ref{def:relativedej}, 
\[(\mathscr{X}\rightarrow B,p_i:B\rightarrow\mathscr{X},\mathscr{L})\in\mathcal{DJ}_{g,n,\mu}^{r,d}(B).\]
We conclude that $\mathcal{DJ}_{g,n,\mu}^{r,d}(B)$ is closed.
\end{proof}

\begin{rem}
Arguing like in the proof of Lemma 6 of \autocite{FP}, one can show that the line bundle associated to a de Jonquières divisor on a quasi-stable curve can be smoothed to a line bundle on a nonsingular curve.  More precisely, let $(\mathscr{X}\rightarrow B,p_i:B\rightarrow\mathscr{X},\mathscr{L})$ be a smoothing of a quasi-stable curve with marked points $[X,p_1,\ldots,p_n]$ (so $\mathscr{X}_0=X$).  Suppose also that for some $(X,L)\in\overline{P}^X_d$,
\[\oo_X\biggl( \sum_{i=1}^n a_i p_i \biggr) = L \in \overline{P}^X_d,\]
Then there exists a line bundle $\mathscr{L}'\rightarrow\mathscr{X}$ and an isomorphism $\mathscr{L}'_0\simeq L$, which is constructed by twisting $\mathscr{L}$. 
\end{rem}

For the next two results, assume that $(d-g+1,2g-2)=1$ so that the definitions of de Jonquières divisors hold not just locally, but also for families over any scheme $B$. 
We give a lower bound on the dimension of irreducible components of $\mathcal{DJ}_{g,n,\mu}^{r,d}(\overline{\mathcal{M}}_{g,n})$.

\begin{prop}
Every irreducible component of $\mathcal{DJ}_{g,n,\mu}^{r,d}(\overline{\mathcal{M}}_{g,n})$ has dimension at least $3g-3+\rho(g,r,d)+n-d+r$.
\end{prop}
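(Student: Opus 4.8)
The plan is to give a lower bound on the dimension of every irreducible component of the de Jonquières locus $\mathcal{DJ}_{g,n,\mu}^{r,d}(\overline{P}_{d,g,n})$ by realising it as a degeneracy locus inside a suitable parameter space and invoking the standard codimension estimate for such loci. First I would identify the ambient space whose dimension I can compute directly. Since $\overline{P}_{d,g,n}$ is a smooth irreducible stack of dimension $4g-3+n$, and the fibre of $\Psi_{d,g,n}$ over $\overline{\mathcal{M}}_{g,n}$ records the choice of balanced line bundle, I would stratify or restrict to the locus $\overline{W}^X_{r,d}$ where $h^0(Y,L)\geq r+1$; on a smooth curve this is exactly $\mathcal{W}^r_d$, whose relative version over $\mathcal{M}_{g,n}$ (equivalently, the space of pairs $(C,L)$ with an $r$-dimensional space of sections) has dimension $3g-3+n+\rho(g,r,d)$, since $\Grd$ is smooth of dimension $\rho(g,r,d)$ over a Brill-Noether general curve and the moduli of pointed curves contributes $3g-3+n$. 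This accounts for the $3g-3+\rho(g,r,d)+n$ summand in the claimed bound.

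Next I would impose the de Jonquières condition itself. Following the degeneracy-locus description established in Section~\ref{sec:deglocus} and reprised in the proof of Proposition~\ref{prop:dejspacecomtype}, the condition that $\mathscr{L}$ admit the divisor $\sum_{i=1}^n a_i p_i$ is that a certain evaluation map of bundles has its $(r+1)$st degeneracy locus everywhere; equivalently, on each fibre the section vanishing to order $a_i$ at $p_i$ imposes $\sum_{i=1}^n a_i = d$ linear conditions on the $(r+1)$-dimensional space of sections. Just as in the smooth-fibre case of Proposition~\ref{prop:dejspacecomtype}, cutting by these $d$ conditions on the projectivised bundle $\mathbb{P}\mathscr{V}$ (which contributes the $+r$ from its relative dimension) drops the dimension by at most $d$. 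Concretely, I would realise $\mathcal{DJ}_{g,n,\mu}^{r,d}(\overline{P}_{d,g,n})$ as the degeneracy locus where $\operatorname{rk}\Phi\leq r$ for the morphism $\Phi:\mathcal{E}\to\mathcal{F}$ with $\operatorname{rk}\mathcal{E}=r+1$ and $\operatorname{rk}\mathcal{F}=d$, so that its codimension in the parameter space of pairs (pointed curve with balanced bundle, section) is at most $(r+1-r)(d-r)=d-r$. Combining, every component has dimension at least $(3g-3+\rho(g,r,d)+n)-(d-r)=3g-3+\rho(g,r,d)+n-d+r$.

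The hypothesis $(d-g+1,2g-2)=1$ is exactly what is needed to ensure the definitions of de Jonquières divisors extend from the local (Poincaré bundle) setting to families over an arbitrary base $B$, so that this degeneracy-locus formalism applies globally over $\overline{P}_{d,g,n}$ rather than just on charts. The main obstacle I anticipate is bookkeeping at the boundary: on quasi-stable fibres the vanishing conditions are distributed across the irreducible components together with twists at the nodes (Definition~\ref{def:dejnodes}), so I would need to check that the $d$ conditions imposed on smooth fibres specialise to $d$ conditions across the components of a quasi-stable curve without dropping rank — that is, that the total number of conditions is preserved under degeneration. The closedness established in Proposition~\ref{prop:closed} guarantees that the locus is well-behaved in families, and together with the fact that $\overline{P}_{d,g,n}$ is smooth (so that every component of a locus cut out by at most $d-r$ determinantal conditions satisfies the expected codimension bound, by the standard lower bound for degeneracy loci in Chapter~II of \autocite{ACGH}), this yields the stated estimate uniformly across all components, including the ``virtual'' ones supported over the boundary.
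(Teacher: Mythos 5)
Your interior computation is sound and matches the paper's arithmetic exactly: Brill--Noether imposes at most $(r+1)(g-d+r)$ conditions and the de Jonqui\`eres condition at most $d-r$ further ones, giving $4g-3+n-(r+1)(g-d+r)-(d-r)=3g-3+\rho(g,r,d)+n-d+r$. Over smooth fibres the evaluation map $\Phi$ exists (on the relative $G^r_d$, say, to dodge the jumping of $h^0$ on $\mathcal{W}^r_d$), and this is essentially the argument of Proposition \ref{prop:dejspacecomtype}. But the proposition is about components of $\mathcal{DJ}_{g,n,\mu}^{r,d}(\overline{P}_{d,g,n})$, including those supported entirely on the boundary, and there your degeneracy-locus description does not exist as stated. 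By Definition \ref{def:dejnodes} the boundary condition is the \emph{existence of a twist} $T$ together with componentwise line-bundle isomorphisms $L|_C\simeq\oo_C\bigl(\sum_{p_i\in C}a_ip_i+\sum_q T(q,C)q\bigr)$; the paper explicitly calls this definition naive because it ignores vanishing and residue conditions at the nodes, so near a boundary point the locus is a finite union, over discrete twist data, of loci defined by isomorphisms in the Jacobians of the components --- not the rank-$\leq r$ locus of any evaluation map $\mathscr{V}\to\pi_{2*}\mathscr{L}|_{\sum a_i\mathscr{D}_i}$. Moreover, your stated worry (``conditions specialise without dropping rank'') is misdirected: rank-dropping only \emph{helps} a lower bound. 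The genuine danger is the opposite one: a fixed-twist locus includes the equations forcing the curve into a boundary stratum (twists live only at nodes), so a component supported on the boundary is a priori cut out by the stratum equations \emph{in addition to} the Brill--Noether and divisor conditions, and the count $(r+1)(g-d+r)+(d-r)$ does not obviously absorb them. This is precisely the hard case, and your proposal defers it rather than resolving it.

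The paper takes a different and more robust route, importing the versal-deformation argument of Proposition 11 of \autocite{FP}: given a point $[X,p_1,\ldots,p_n,L]$ of a component, one drops the markings, adds $m$ auxiliary markings to rigidify the unstable components, passes to the nonsingular versal deformation space $\mathcal{V}$ of dimension $3g-3+m$, forms the relative degree-$d$ Picard space $\mathcal{B}\to\mathcal{V}$, and takes the closure $\mathcal{W}^r_d$ of the Brill--Noether locus over the smooth fibres, of dimension at least $3g-3+\rho(g,r,d)+m$; the de Jonqui\`eres and twist conditions are then imposed in this versal setting as in loc.~cit., where the boundary/twist bookkeeping is carried out, and the auxiliary $m$ parameters cancel at the end. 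So your proposal recovers the correct expected-codimension arithmetic by a shortcut that would be a genuine simplification over the interior, but it has a real gap at the boundary components, which are the actual content of the statement; to repair it you would need to fix the dual graph and twist at the generic point of a boundary component and verify, component by component of the quasi-stable curve, that the stratum equations plus the componentwise Jacobian conditions still total at most $(r+1)(g-d+r)+(d-r)$ --- which is in effect the FP argument the paper invokes.
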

\begin{proof}
The proof of this statement is the same as the one of Proposition 11 in \autocite{FP}.  The only difference is the dimension bound itself, which we explain below.

Let $[X,p_1,\ldots,p_n,L]\in\mathcal{DJ}_{g,n,\mu}^{r,d}(\overline{\mathcal{M}}_{g,n})$ and $L=\oo_X\left( \sum_{i=1}^n a_i p_i \right)$ its associated twisted line bundle.  We drop the markings $p_i$ without contracting the unstable components that we obtain.  We then add $m$ new markings to $X$ to get rid of the automorphisms of the unstable components (see loc.~cit.~for details) and we obtain a stable pointed curve $[X,q_1,\ldots,q_m]$.  Let $\mathcal{V}$ be its nonsingular versal deformation space.  Hence
\[ \dim\mathcal{V} = \dim\text{Def}([X,q_1,\ldots,q_m]) = 3g-3+m. \]
Let $\pi:\mathcal{C}\rightarrow\mathcal{V}$ be the universal curve and consider the relative moduli space $\epsilon:\mathcal{B}\rightarrow\mathcal{V}$ of line bundles of degree $d$ on the fibres of $\pi$.  Let $\mathcal{V}^*\subset\mathcal{V}$ be the locus of smooth curves and $\mathcal{B}^*\rightarrow\mathcal{V}^*$ the relative Picard scheme of degree $d$.  Finally, let $\mathcal{W}^{r*}_d\subset\mathcal{B}^*$ be the codimension at most $(r+1)(g-d+r)$ locus of line bundles with dimension of the space of sections $r+1$.  Let $\mathcal{W}^r_d$ be the closure of $\mathcal{W}^{r*}_d$ in $\mathcal{B}$.  Then 
\[  \dim\mathcal{W}^r_d \geq \dim\mathcal{B}-(r+1)(g-d+r)+m = 3g-3+\rho(g,r,d)+m. \]
This then contributes to the lower bound in the same way as in loc.~cit.
\end{proof}

Moreover, we also obtain an upper bound for the dimension of certain irreducible components of $\mathcal{DJ}_{g,n,\mu}^{r,d}(\overline{\mathcal{M}}_{g,n})$ supported on the locus of marked quasi-stable curves with at least one node.

\begin{prop}
Let $\mathcal{Z}\subset \mathcal{DJ}_{g,n,\mu}^{r,d}(\overline{M}_{g,n})$ be an irreducible component supported entirely on the locus of quasi-stable curves with $n$ marked points and at least one node.  Then $\mathcal{Z}$ has dimension at most $4g-4+n-d+r$ at a point $(X,p_1\ldots,p_n,L)$ with $L\in \overline{W}^X_{r,d}\setminus\overline{W}^X_{r+1,d}$.
\end{prop}
\begin{proof}
Denote by $\Gamma_Z$ the dual graph of the curve $X$.  By the definition of $Z$, the set $E$ of edges of $\Gamma_Z$ has at least one element.  Denote by $v$ the vertices of $\Gamma_Z$ and their set by $V$ (with $|V|\geq 1$).  By definition, each $v$ corresponds to an irreducible component of $X$ whose genus we denote by $g_v$.  Recall the genus formula:
\begin{equation}\label{eq:genusformula}
g-1 = \sum_{v\in V} (g_v-1) + |E|. 
\end{equation}
The strategy in what follows is to bound the dimension of the space of $(X,p_1,\ldots,p_n,L)\in Z$ with graph exactly $\Gamma_Z$.  Now $X$ is equipped with a line bundle $L$ of degree $d$ with strictly balanced multidegree $\underline{d}=(d_v)_{v\in\Gamma_Z}$) and $h^0(X,L)=r+1$.  Denote by $L_v$ the restriction of $L$ to the irreducible component corresponding to the vertex $v$ and by $n_v$ the number of the marked points on it.
Thus, for a fixed vertex $v$ of $\Gamma_v$ and assuming the result of Theorem \ref{thm:smooth},  the dimension of the space of de Jonquières divisors of length $n_v$ on the component corresponding to $v$ is at most $3g_v-3+\rho(g_v,r_v,d_v)+n_v-d_v+r_v$, where $r_v:=h^0(L_v)-1$.  The dimension bound is obtained by summing over the vertices
\begin{align*}
\dim Z &\leq \sum_{v\in V} \left( 3g_v - 3 + \rho(g_v,r_v,d_v) + n_v - d_v + r_v \right)\\
&\leq  3\sum_{v\in V} (g_v - 1) + n + 2|E| - d + \sum_{v\in V} r_v + \sum_{v\in V}\rho(g_v,r_v,d_v),
\end{align*}
where we used the fact that $\sum_{v\in V} n_v \leq n+2|E|$.  The surplus of $2|E|$ comes from the preimages of the nodes on each component in case the twist from the definition of de Jonquières divisors is nonzero.  From (\ref{eq:genusformula}) we have
\[ \dim Z \leq 3g-3+n-d-|E|+\sum_{v\in V}r_v + \sum_{v\in V}\rho(g_v,r_v,d_v). \]
To estimate $\sum_{v\in V}r_v$, let $X_1$ and $X_2$ be two connected subcurves of $X$ intersecting each other at $k$ nodes.  From the Mayer-Vietoris sequence
\[ 0\rightarrow H^0(X,L)\rightarrow H^0(X_1,L|_{X_1}) \oplus H^0(X_2,L|_{X_2}) \rightarrow \mathbb{C}^k \]
we obtain $h^0(X_1,L|_{X_1}) + h^0(X_2,L|_{X_2})\leq r+1+k$.  Consider in turn the same Mayer-Vietoris sequence for two connected subcurves of $X_1$ and of $X_2$, etc.~, until we are left only with irreducible components.  Working backwards and adding up the dimensions of the spaces of global sections for all irreducible components of $X$, we obtain
\begin{align*}
&\sum_{v\in V} h^0(L_v) = h^0(X,L) + |E|\Leftrightarrow \\
&\sum_{v\in V} (r_v + 1) = r+1 +|E|\Leftrightarrow\\
&\sum_{v\in V} r_v = r+1+|E|-|V|.
\end{align*}  
For the sum of Brill-Noether numbers, we use the  bound $\sum_v \rho(g_v,r_v,d_v)\leq\sum_{v\in V}g_v$, which in turn yields, using (\ref{eq:genusformula}), $\sum_{v\in V}g_v=g-1-|E|+|V|$.
Hence
$\dim Z \leq 4g-3+n-d+r-|E|\leq 4g-4+n-d+r$.  
\end{proof}

\section{The dimension theorem for complete linear series}\label{sec:prooflls}
We now give a proof of the dimension theorem (Theorem \ref{dimension}) for complete linear series (i.e.~those with $s=g-d+r\geq 0$) that makes use of the framework of limit linear series as discussed in Section \ref{sec:lls}.

We construct a nodal curve $X=C_1\cup_p C_2$ of genus $g$ out of two general pointed curves $(C_1,p)$ of genus $g_1$ and $(C_2,p)$ of genus $g_2$, where $g_1+g_2=g$.
Furthermore, we equip $X$ with a limit linear series of type $\grd$ which we construct from the corresponding aspects $\grdop{r}{d_1}(b_1p)$ on $C_1$ and $\grdop{r}{d_2}(b_2p)$ on $C_2$, where $b_1,b_2\in\mathbb{Z}_{\geq 0}$.  The genera $g_j$, the degrees $d_j$, and the multiplicities $b_j$ are chosen in such a way as to allow for a convenient induction step, where the induction hypothesis is the dimension theorem for $\grdop{r}{d_j}$ on $C_j$ for $j=1,2$.
We do this in two steps:

\begin{enumerate}
	\item The proof for series with $s\geq 2$ and $\rho(g,r,d)=0$ works by induction on $s$ (while keeping $\rho(g,r,d)=0$ fixed), with base case given by the canonical linear series on a general smooth curve (which has $s=1$ and $\rho(g,r,d)=0$).  This is done in Section \ref{sec:step1}.
%	the $C_j$ and their corresponding aspects are chosen so that $\rho(g_j,r,d_j)=0$ and their indices of speciality $s_j<s$, for $j=1,2$. The resulting curve $X$ possesses a $\grd$ with index $s$ and $\rho(g,r,d)=0$.  
	\item The proof for linear series with $\rho(g,r,d)>0$ works by induction on $\rho(g,r,d)$ (and keeping $s$ constant), with base case given by the linear series with $\rho(g,r,d)=0$ from the previous step. This is done in Section \ref{sec:step2}.
%	we choose the $C_j$ and their corresponding aspects such that the limit $\grd$ on $X$ has $\rho(g,r,d)=\rho(g_1,r,d_1)+1$.
\end{enumerate}

In choosing the aspects $\grdop{r}{d_j}(b_jp)$ on $C_j$ (with $j=1,2$), one has to take the following restrictions into consideration, which ensure that the limit we constructed exists and is smoothable:
\begin{itemize}
	\item a general pointed curve $(C_j,p)\in\mathcal{M}_{g_j,1}$ may carry a $\grdop{r}{d_j}(b_j p)$ with ramification sequence at least $(\alpha_0,\ldots,\alpha_r)$ at the point $p$ if and only if (cf.~\cite{EH87}, Proposition 1.2)
	\begin{equation}\label{eq:ramif}
	\sum_{i=0}^r(\alpha_i + g_j - d +r)_+ \leq g_j, 
	\end{equation}
	where $(x)_+=\max\{x,0\}$.  In our case, the ramification sequence at $p$ is $(b_j,\ldots,b_j)$.
%	Note that (\ref{eq:ramif}) is a strengthening of (\ref{eq:ramifcond}), which gives only a necessary condition for the existence of linear series with prescribed ramification.
%	\item each aspect $\grdop{r}{d_j}(b_jp)$ should be dimensionally proper; in our case, by Remark 2) following Theorem 1.1 of \cite{EH87}, this is indeed the case.  
	\item the limit $\grd$ on $X$ must be refined in order to satisfy the hypotheses of the smoothability result of Eisenbud and Harris (Theorem 3.4 of \cite{EH83}).  This means that the inequality in (\ref{eq:vanish}) must be in fact an equality, thus further constraining the choice of $b_j$.
\end{itemize}
Combining these with Theorem 1.1 of \autocite{EH87} and Corollary 3.7 of \autocite{EH86} we obtain the smoothability of the limit $\grd$ on $X$.  Assume that we are in the setting of Definition \ref{def:comtype} with $\mathscr{X}_0=X$.  If the limit $\grd$ on $X$ admits a de Jonquières divisor $\sum_{i=1}^n a_i D_i$, then each aspect $\grdop{r}{d_j}(b_jp)$ on $C_j$ (with $j=1,2$) admits the de Jonquières divisor
\[ \sum_{i=1}^k a_i D_{i,C_j} + \Bigl(d- \sum_{i=1}^k a_i d_{i,C_j} \Bigr)p, \]
where the following inequality must hold in order to preserve the chosen ramification at $p$: 
 \begin{equation}\label{eq:ramifatp}
 d- \sum_{i=1}^k a_i d_{i,C_j} \geq b_j.
 \end{equation}
 Removing the base point $p$ from the series $\grdop{r}{d_j}(b_j p)$, we are left with a general linear series $l_j:=\grdop{r}{d_j}$ on $C_j$ (for $j=1,2$), with simple ramification at $p$ and admitting a de Jonquières divisor
\[ \sum_{i=1}^k a_i D_{i,C_j} + \Bigl(d_j- \sum_{i=1}^k a_i d_{i,C_j} \Bigr)p. \]

The strategy is to prove that $\dim DJ_{k,N}^{r,d}(\mu_1,\mu_2,X,l)\leq N-d+r$ by using the dimension theorem for the spaces of de Jonqui\`eres divisors of the series $l_j$ on $C_j$.  By the upper semicontinuity of fibre dimension applied to the map $\phi$ from Remark \ref{rem:fibre} it follows that 
\[\dim DJ_{k,N}^{r,d}(\mu_1,\mu_2,X_t,l_t)\leq N-d+r\] for a smoothing of $X$ to a general curve $X_t$ equipped with a general linear series $l_t$ of type $\grd$.  Combining this with Lemma \ref{lemma:dimbound}, we obtain the statement of the dimension theorem for a general curve with a general linear series.

\subsection{Step 1: proof for $\rho(g,r,d)=0$}\label{sec:step1}
\indent Having fixed $r$ and $s=g-d+r\geq 2$, the proof in this case works by induction on $s$.  The base case is given by the dimension theorem for the canonical linear series, (the unique linear series with index of speciality $s=1$ and vanishing Brill-Noether number), on a general smooth curve of any genus.
This follows either from our discussion in Section \ref{sec:nonspecial} or from Theorem 1.1 a) of Polishchuk \autocite{Po} with $D=0$.
The induction step constructs a curve $X$ of genus $g$ with a limit linear series $l$ of type $\grd$ with index of speciality $s$ and Brill-Noether number $\rho(g,r,d)=0$ from two irreducible components: $C_1$ equipped with a linear series $l_1$ with index of speciality $s_1=s-1$ and Brill-Noether number $\rho(l_1)=0$ and $C_2$ equipped with its canonical linear series (with index of speciality $s_2=1$).
The induction hypothesis at each step is the dimension theorem for each of the components $C_1$ and $C_2$ equipped with their respective linear series $l_1$ and $l_2$.  

We now show how to obtain the curve $X$. 
From the condition $\rho(g,r,d)=0$, we get 
\begin{align*}
&g=s(r+1),\\
&d=g+r-s.
\end{align*}
We start with a general curve $C_1$ of genus $(s-1)(r+1)$ equipped with a general linear series $l_1$ of type $\grdop{r}{g-s}$. Hence the index of speciality of $l_1$ is
\[ s_1=(s-1)(r+1)-g+s+r=(s-1)(r+1)-(s-1)(r+1)-r+s-1+r=s-1 \]
and its Brill-Noether number is
\begin{align*}
 \rho((s-1)(r+1),r,g-s)= (s-1)(r+1)-(r+1)(s-1)=0.
\end{align*} 
We choose a general point $p\in C_1$ to which we attach another general curve $C_2$ of genus $r+1$ equipped with its canonical linear series $l_2=\grdop{r}{2r}$.  This series has index of speciality $s_2=1$ and Brill-Noether number
\[ \rho(r+1,r,2r)=0. \]

Thus we obtained a curve $X=C_1\cup_p C_2$ of genus $g$.  We construct on $X$ a refined limit linear series $l$ of type $\grd$ aspect by aspect using $l_1$ and $l_2$.  On $C_1$ we take the aspect to be the series $l_1(rp)$, which therefore has the following vanishing sequence on $C_1$:
\[(r,r+1,\ldots,2r).\]  Since the limit is refined, the vanishing sequence on $C_2$ must be
\[ (d-2r,\ldots,d-r), \]
so we take the aspect corresponding to $C_2$ to be the series $l_2((d-2r)p)$.
Finally, we check that the limit series
\[ \{ (C_1,l_1(rp)),(C_2,l_2((d-2r)p) \} \]
 satisfies (\ref{eq:ramif}): 
\begin{align*}
&\text{on }C_1: \sum_{i=0}^r(r+(s-1)(r+1)-d+r)_+=(r+1)(s-1)\leq (r+1)(s-1),\\
&\text{on }C_2: \sum_{i=0}^r(d-2r+r+1-d+r)_+=r+1\leq r+1.
\end{align*}
Hence the limit linear series $l$ on $X$ is smoothable.

We now prove that $\dim DJ_{k,N}^{r,d}(\mu_1,\mu_2,X,l)\leq N-d+r$.  For $j=1,2$, let $N_j=\sum_{i=1}^k d_{i,C_j}$ and therefore $N_1 + N_2=N$.
 As seen in Section \ref{sec:lls}, $\sum_{i=1}^k a_i D_i \in DJ_{k,N}^{r,d}(\mu_1,\mu_2,X,l)$ if and only if 
\[ \sum_{i=1} a_i D_{i,C_j} +  \Bigl( d-\sum_{i=1}^k a_i d_{i,C_j} \Bigr)p\] 
is a de Jonquières divisor of length (at most) $N_j+1$ of $d$ of the aspect of $l$ corresponding to $C_j$, where $j=1,2$.  
We distinguish a few possibilities:
\begin{enumerate}
	\item If all points specialise on one of the $C_j$ (with $j=1,2$), then $d-\sum_{i=1}^k a_i d_{i,C_j}  = 0$, contradicting inequality (\ref{eq:ramifatp}).  Hence we must have
	\begin{align*}
	&d-2r\leq\sum_{i=1}^k a_i d_{i,C_1}\leq d-r, \\
	&2r\geq\sum_{i=1}^k a_i d_{i,C_2}\geq r.
	\end{align*}
	\item If $\sum_{i=1}^k a_i d_{i,C_1}=d-r$, then $\sum_{i=1}^k a_i d_{i,C_2}=r$ and moreover
	\[ \sum_{i=1}^k a_i D_{i,C_1} \in DJ_{k,N_1}^{r,d-r}(\mu'_1,\mu'_2,C_1,l_1)\text{ and } \sum_{i=1}^k a_i D_{i,C_2} + rp \in DJ_{k,N_2+1}^{r,2r}(\mu''_1,\mu''_2,C_2,l_2),\]
	where $\mu'_1=(a_i)_{D_{i,C_1}>0}$, $\mu'_2=(d_{i,C_1})$ are the strictly positive vectors
	corresponding to the component $C_1$, while
	 $\mu''_1=(a_i,r)_{D_{i,C_2}>0}$ and $\mu''_2=(d_{i,C_2},1)$ are the ones corresponding to $C_2$.
By the induction hypothesis, the following inequalities must be satisfied  
	\begin{align*}
	&\dim DJ_{k,N_1}^{r,d-r}(\mu'_1,\mu'_2,C_1,l_1)=N_1-d+2r=:x\geq 0 \\
	&\dim DJ_{k,N_2+1}^{r,2r}(\mu''_1,\mu''_2,C_2,l_2) =N_2+1-r= (N-d+r) - x +1 \geq 0,
\end{align*}		
	where we used the fact that $N_1+N_2=N$.  
Furthermore, note that on $C_2$ we are actually only interested in the locus in $DJ_{k,N_2+1}^{r,2r}(\mu''_1,\mu''_2,C_2,l_2)$ consisting of divisors with $p$ in their support.  More precisely,
consider the incidence correspondence
\[ \Gamma = \{ (D,p) \mid p\in D \} \subset DJ_{k,N_2 +1}^{r,2r}(\mu''_1,\mu''_2,C_2,l_2) \times C \]
and let $\pi_1$, $\pi_2$ be the canonical projections.  The locus we are after is $\pi_1(\pi_2^{-1}(p))$.  By construction, $\pi_2$ is dominant and since $p$ is general, 
\[\dim\pi_1(\pi_2^{-1}(p))=\dim DJ_{k,N_2 +1}^{r,2r}(\mu''_1,\mu''_2,C_2,l_2)-1.\]
	Therefore the dimension estimate for $DJ_{k,N}^{r,d}(\mu_1,\mu_2,X,l)$ is 
	\begin{align*}
	 \dim DJ_{k,N}^{r,d}(\mu_1,\mu_2,X,l)&\leq \dim DJ_{k,N_1}^{r,d-r}(\mu'_1,\mu'_2,C_1,l_1) + DJ_{k,N_2+1}^{r,2r}(\mu''_1,\mu''_2,C_2,l_2) -1\\ &= N-d+r. 
\end{align*}	
	\item If $d-2r<\sum_{i=1}^k a_i d_{i,C_1}<d-r$, then $2r>\sum_{i=1}^k a_i d_{i,C_2}>r$ and we obtain de Jonquières divisors of length $N_j+1$ on the component $C_j$, for $j=1,2$.  This yields
	\begin{align*}
	&\dim DJ_{k,N_1+1}^{r,d-r}(\mu'_1,\mu'_2,C_1,l_1)=N_1+1-d+2r=:x\geq 0 \\
	&\dim DJ_{k,N_2+1}^{r,2r}(\mu''_1,\mu''_2,C_2,l_2) =N_2+1-r= (N-d+r) - x +2 \geq 0.
\end{align*}		
	Arguing as in the previous case (for both $C_1$ and $C_2$), we obtain the same upper bound for the dimension of $DJ_{k,N}^{r,d}(\mu_1,\mu_2,X,l)$:
	\begin{align*}
	\dim DJ_{k,N}^{r,d}(\mu_1,\mu_2,X,l)&\leq \dim DJ_{k,N_1}^{r,d-r}(\mu'_1,\mu'_2,C_1,l_1)-1 + DJ_{k,N_2+1}^{r,2r}(\mu''_1,\mu''_2,C_2,l_2) -1 \\
	&= N-d+r.
\end{align*}	   
	\item If $\sum_{i=1}^k a_i d_{i,C_1}=d-2r$, then $\sum_{i=1}^k a_i d_{i,C_2}=2r$ and we get de Jonquières divisors of length $N_1+1$ on $C_1$ and of length $N_2$ on $C_2$. This case is analogous to (1) and we again obtain the upper bound $N-d+r$ for $\dim DJ_{k,N}^{r,d}(\mu_1,\mu_2,X,l)$.
\end{enumerate} 

\subsection{Step 2: proof for all $\rho(g,r,d)\geq 1$}\label{sec:step2}
\indent Fix $r$, $s=g-d+r$, and $\rho(g,r,d)\geq 1$.  We continue with the proof by induction on $\rho(g,r,d)$, where the base case is given by the dimension theorem for linear series with $\rho(g,r,d)=0$ (proved in Section \ref{sec:step1}).  The induction step constructs a curve $X$ of genus $g$ with a linear series $l$ of type $\grd$ from two components: $C_1$ equipped with a linear series $l_1$ and $C_2$ equipped with $l_2$ such that $\rho(l)=\rho(g,r,d)=\rho(l_1)+1$.  As before, the induction hypothesis at each step is the dimension theorem for the components $C_j$ and their corresponding linear series $l_j$, with $j=1,2$.  

We start with a general curve $C_1$ of genus $g-1$ equipped with a general linear series $l_1=\grdop{r}{d-1}$.  We pick a general point $p\in C_1$ and attach to it an elliptic normal curve $C_2$
with its associated linear series $l_2=\grdop{r}{r+1}$.  Note that the dimension theorem holds for the elliptic normal curve by virtue of the fact that $l_2$ is non-special (see the discussion in Section \ref{sec:nonspecial}).

The resulting curve $X=C_1\cup_p C_2$ has genus $g$
and we construct on it a limit linear series $l$ of type $\grd$ aspect by aspect.  On $C_1$ we take the aspect $\grdop{r}{d-1}(p)$, hence $p$ is a base point of the $\grd$ on $X$ with vanishing sequence on $C_1$ given by
\[ (1,2,\ldots,r+1). \]
Since the limit $\grd$ must be refined in order to be smoothable, the aspect on $C_2$ must have the following vanishing sequence at $p$
\[ (d-r-1,\ldots,d-1). \]
Thus the aspect on $C_2$ is given by the series $\grdop{r}{r+1}((d-r-1)p)$.

We check that this limit $\grd$ also satisfies (\ref{eq:ramif}):
\begin{align*}
	&\text{on }C_1: (r+1)(1+g-1-d+r)=(r+1)s\leq g-1, \\
	&\text{on }C_2: (r+1)(d-r-1+1-d+r)=0\leq 1,	 
\end{align*}
where in the first inequality we used the fact that $\rho(g,r,d)=g-(r+1)s\geq 1$.  Hence $l$ is a smoothable limit linear series on $X$.  Moreover, its Brill-Noether number is
\[  \rho(l)=\rho(g,r,d) = g-(r+1)s \]
while the linear series $l_1=\grdop{r}{d-1}$ on $C_1$ has Brill-Noether number
\[ \rho(l_1)=\rho(g-1,r,d-1) = \rho(g,r,d)-1. \]
%We note here that the base case of the induction on $\rho$ consists of the dimension theorem for the aspect on $C$ with $\rho(g,r,d)=0$ proved in \ref{sec:step1} and for the aspect corresponding to the elliptic normal curve (which has $s=0$ and is therefore covered by Section \ref{sec:nonspecial}).
Finally, we observe here that the induction step leaves the indices of speciality unchanged since 
$s_1=(g-1)-(d-1)+r=s$.

We now show that $\dim DJ_{k,N}^{r,d}(\mu_1,\mu_2,X,l)\leq N-d+r$.  The argument is the same as in \ref{sec:step1}.  For $j=1,2$, denote by $N_j$ the length of the divisor $\sum_{i=1}^k a_i D_{i,C_j}$.   
As for the $\rho(g,r,d)=0$, there are a few possibilities:
\begin{enumerate}
	\item If $\sum_{i=1}^k a_i d_{i,C_1}= d-1$, then $\sum_{i=1}^k a_i d_{i,C_2}=1$ and moreover
	\[ \sum_{i=1}^k a_i D_{i,C_1} \in DJ_{k,N_1}^{r,d-1}(\mu'_1,\mu'_2,C_1,l_1)\text{ and }\sum_{i=1}^k a_i D_{i,C_2} + rp\in DJ_{k,N_2+1}^{r,r+1}(\mu''_1,\mu''_2,C_2,l_2), \]
	where the vectors $\mu'_1,\mu'_2,\mu''_1,\mu''_2$ are defined as in \ref{sec:step1}.
	By the induction hypothesis,	
	\begin{align*}
	&\dim DJ_{k,N_1}^{r,d-1}(\mu'_1,\mu'_2,C_1,l_1) = N_1 - d  + 1 +r =:x\geq 0,\\
	&\dim DJ_{k,N_2+1}^{r,r+1}(\mu''_1,\mu''_2,C_2,l_{C_2}) = N_2 = (N-d+r)+1-x.
	\end{align*}
	As discussed in \ref{sec:step1}, we have the bound
	\[ \dim DJ_{k,N}^{r,d}(\mu_1,\mu_2,X,l) \leq DJ_{k,N_1}^{r,d-1}(\mu'_1,\mu'_2,C_1,l_1)+DJ_{k,N_2+1}^{r,r+1}(\mu''_1,\mu''_2,C_2,l_2)-1=N-d+r. \]
	\item If $d-r-1<\sum_{i=1}^k a_i d_{i,C_1} < d-1$, then $r+1>\sum_{i=1}^k a_{i,C_2}>1$ and we get de Jonquières divisors of length $N_1+1$ on $C_1$ and length $N_2+1$ on $C_2$.  Counting dimensions as before we obtain the upper bound $N-d+r$ for the dimension of $DJ_{k,N}^{r,d}(\mu_1,\mu_2,X,l)$.
	
	\item If $\sum_{i=1}^k a_i d_{i,C_1} = d-r-1$, then $\sum_{i=1}^k a_i d_{i,C_2}=r+1$ and we have de Jonquières divisors of length $N_1+1$ on $C_1$ and length $N_2$ on $C_2$.  We obtain the same upper bound $N-d+r$.
\end{enumerate}

\section{Smoothness of the space of de Jonqui\`eres divisors}\label{sec:smooth}
\indent We now prove Theorem \ref{thm:smooth} which states that the space $DJ^{r,d}_{k,N}(\mu_1,\mu_2,C,l)$ is smooth for a complete linear series $l$ by showing that it arises as a transverse intersection of subvarieties of the symmetric product $C_d$.  Recall that we already have the result for some cases (see Section \ref{sec:nonspecial}) and it remains to show it for $r\geq 3$ and $s\geq 2$.

From the transversality condition (\ref{eq:transversefinal}), we have to show that $H^0(C,K_C-D-D_1-\ldots-D_k)=0$. To do this, we prove that 
\[ g-(d+N)+r'<0, \]
where $r'=h^0(D+D_1+\ldots+ D_k)-1=r+n'$, for some integer $n'\geq 0$.

Suppose towards a contradiction that $n'\geq N-g+d-r$. 

%Let $L=\oo_C(D+D_1+\ldots+ D_k)\in W^{r+n'}_{d+N}(C)$ and so its corresponding complete linear series $l=\grdop{r+n'}{d+N}$ has an $N$-secant divisor $D_1+\ldots+D_k$, i.e.
%\[ D_1+\ldots+D_k \in \{ E\in C_d \mid \dim l(-E)\geq r \}. \]

Consider all flag curve degenerations $j:\overline{\mathcal{M}}_{0,g}\rightarrow\overline{\mathcal{M}}_g$ and let $\mathcal{Z}:=\overline{\mathcal{M}}_{0,g}\times_{\overline{\mathcal{M}}_g}\overline{\mathcal{C}}^{N}_g$, where $\overline{\mathcal{C}}_g=\overline{\mathcal{M}}_{g,1}$.
  Let $U\subset\mathcal{Z}$ be the closure of the divisors with $r'=r+n'$ and $n'\geq N-g+d-r$ on all curves from $\im(j)\subseteq\overline{\mathcal{M}}_g$.  By assumption, the map $X\rightarrow\overline{\mathcal{M}}_{0,g}$ is dominant, hence $\dim X\geq g-3$.  Applying Proposition 2.2 of \autocite{Fa2}, there exists a point $[\widetilde{R}:=R\cup E_1\cup\ldots\cup E_g,y_1,\ldots,y_{N}]\in U$, where $R$ is a rational spine (not necessarily smooth) and the $E_i$ are elliptic tails such that either:
\begin{enumerate}[label=(\roman*), wide, labelwidth=!, labelindent=0pt]
	\item the supports of the divisors $D_1,\ldots,D_k$ coalesce into one point, or else
	\item the supports of the divisors $D_1,\ldots,D_k$ lie on a connected subcurve $Y$ of $\widetilde{R}$ of arithmetic genus $p_a(Y)=N$ and $|Y\cap\overline{(\widetilde{R}\setminus Y)}|=1$.
\end{enumerate}
Denote by $q_1,\ldots,q_g$ the points of attachment of the elliptic tails to the rational spine.

A short computation using the Plücker formula allows us to immediately dismiss Case (i). 
We now deal with Case (ii).  By assumption, there exists a proper flat morphism $\phi:\mathscr{X}\rightarrow B$ satisfying:
\begin{enumerate}[wide, labelwidth=!, labelindent=0pt]
	\item $\mathscr{X}$ is a smooth surface, $B$ is a smooth affine curve with $0\in B$ a point such that the fibre $\mathscr{X}_0$ is a curve stably equivalent to the curve $\tilde{R}$, and the fibre $\mathscr{X}_t$ is a smooth projective curve of genus $g$ for $t\neq 0$.  Furthermore, we have the 
relative divisors $\mathscr{D}_i\in\mathscr{X}^{d_i}$ with $\mathscr{D}_i(0)=D_i$, for $i=1,\ldots,k$.
	\item Let $\mathscr{X}^*=\mathscr{X}\setminus\mathscr{X}_0$.  
	Then there exists a line bundle $\mathscr{L}^*$ on $\mathscr{X}^*$ of relative degree $d$ 
and with $\dim H^0(\mathscr{X}_t,\mathscr{L}_t)=r+1$ for $t\neq 0$.
After (possibly) performing a base change and resolving the resulting singularities, the
pair $(\mathscr{L}^*,\mathscr{V}^*:=H^0(\mathscr{X}^*,\mathscr{L}^*))$ yields a refined limit linear series $\mathfrak{m}:=\grd$ on $\tilde{R}$.  The limit linear series $\mathfrak{m}$ has moreover the property that it admits the de Jonqui\`eres divisor 
$\sum_{i=1}^k a_i \mathscr{D}_i(0)=\sum_{i=1}^k a_i D_i$.  
	\item  The line bundle $\mathscr{L}^*$ also has the following property: $\mathscr{N}^*:=\mathscr{L}^*\bigl(\sum_{i=1}^k \mathscr{D}_i\bigr)$ is another line bundle on  $\mathscr{X}^*$ of relative degree $d+N$ and with $h^0(\mathscr{X}^*_t,\mathscr{N}^*_t)=r+n'+1$.    The pair $(\mathscr{N}^*,\widetilde{\mathscr{V}}^*:=H^0(\mathscr{X}^*,\mathscr{N}^*))$ also gives a limit linear series $\mathfrak{l}:=\grdop{r+n'}{d+N}$ on $\tilde{R}$.  Furthermore, the limit linear series $\mathfrak{l}$ admits the de Jonqui\`eres divisor $\sum_{i=1}^k (a_i+1) D_i$.
\end{enumerate} 
 The situation can be reformulated as follows: for $t\neq 0$,
 \[ \dim H^0(\mathscr{X}^*_t,\mathscr{N}^*_t(-\sum_{i=1}^k \mathscr{D}_i(t))=r. \]
 Then $\mathscr{N}^*\otimes\oo_{\mathscr{X}^*}(-\sum_{i=1}^k \mathscr{D}_i(B\setminus 0))$ induces the limit linear series $\grd$ that we started with.  
	
Now, for a component $C\subset\mathscr{X}_0$, let $(\mathscr{L}_C,\mathscr{V}_C)\in G^r_d(Z)$ be the $C$-aspect $\mathfrak{m}_C$ of the limit $\mathfrak{m}=\grd$.  Then there exists a unique effective divisor $D_C\in C_N$ supported only at the points of $(C\cap \bigcup_{i=1}^{k}\mathscr{D}_i(B)\cap(C\cap\overline{\mathscr{X}_0\setminus C})$ such that the $C$-aspect of $\mathfrak{m}$ has the property that the restriction map
\[ \mathscr{V}_C\rightarrow \mathscr{V}_C|_{D_C} \] has non-trivial kernel. 
For the $C$-aspect $\mathfrak{l}_C$ of the limit $\mathfrak{l}$ the situation is analogous, but now we have an effective divisor $D'_C\in C_{d+N}$ with $D'_C\geq D_C$.  Moreover, the $C$-aspect of $\mathfrak{m}$ is of the form
\[ \mathfrak{m}_C = (\mathscr{M}_C:=\mathscr{N}_C\otimes\oo_C(-D'_C+D_C), \mathscr{W}_C\subset\widetilde{\mathscr{V}}_C\cap H^0(\mathscr{M}_C)). \]
Thus, the collection $\mathfrak{m}_Y:=\{\mathfrak{m}_C\}_{C\subset Y}$ forms a limit $\grd$ on $Y$, while the collection $\mathfrak{l}_Y:=\{\mathfrak{l}_C\}_{C\subset Y}$ forms a limit $\grdop{r+n'}{d+N}$ on $Y$.

Let $p=Y\cap\overline{(\widetilde{R}\setminus Y)}$ and $Z:=\overline{\widetilde{R}\setminus Y}$.
The vanishing sequence of the limit $\grd$ at $p$ is a subsequence of the vanishing sequence at $p$ of the limit $\grdop{r+n'}{d+N}$.  The complement of this subsequence yields another limit linear series $\grdop{n'-1}{d}$ on $Y$ (see Lemma 2.1 of \autocite{Fa2}).
We distinguish two cases:
\begin{enumerate}[label=(\Roman*), wide, labelwidth=!, labelindent=0pt]
\item $N<g$. \\
To begin with, we list two technical results that help us determine a lower bound for the ramification sequence at $p$ of the limit linear series $\grdop{n'-1}{d}$ in this case. 

\begin{lemma}[Corollary 1.6 of \autocite{EH832}]\label{lemma:EH}
Let $C\simeq \p^1$ be an irreducible component of $Z$ such that $q_j \in C$ for some $j=1,\ldots,N$, where $q_j$ is the point of attachment of the elliptic tail $E_j$ to $C$.  Let $l$ be a limit linear series on $Z$ and $C'$ be another component of $\widetilde{R}$ and $q=C\cap C'$.  If $q'$ is another point on $C$, then for all but at most one value of $i$,
\[ a_i(l_C,q') < a_i(l_{C'},q). \]
\end{lemma}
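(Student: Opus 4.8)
The statement compares vanishing orders of two \emph{different} aspects across the node $q$, so my first move is to make it intrinsic to the single aspect $l_C$ on $C\cong\p^1$. Since the limit series is \emph{refined}, the inequality~(\ref{eq:vanish}) at $q=C\cap C'$ is an equality, giving $a_i(l_{C'},q)=d-r-a_{r-i}(l_C,q)$ for every $i$. The desired bound $a_i(l_C,q')<a_i(l_{C'},q)$ then becomes the purely $C$-intrinsic assertion
\[ a_i(l_C,q')+a_{r-i}(l_C,q)<d-r\qquad\text{for all but at most one }i, \]
which no longer involves the component $C'$ and reduces the lemma to a comparison of the two vanishing sequences of $l_C$ at the points $q'$ and $q$.

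Next I would analyse $l_C$ as a $g^r_d$ on the rational curve $\p^1$, where vanishing data is rigid. The Pl\"ucker relation fixes the total ramification weight $\sum_p\sum_i\big(a_i(l_C,p)-i\big)=(r+1)(d-r)$, and at a general point the vanishing sequence is the generic one $a_i(l_C,q')=i$; in that case the inequality simplifies to $a_{r-i}(l_C,q)<d-r-i$, i.e.\ to the claim that at most one ramification index of $l_C$ at $q$ is \emph{extremal}. The role of the elliptic tail $E_j$ attached at $q_j\in C$ is precisely to stop $q$ from absorbing the whole ramification budget: the refined compatibility~(\ref{eq:vanish}) at $q_j$, together with the genus-one constraint~(\ref{eq:ramif}) on the aspect carried by the general tail $E_j$, forces a definite positive share of the weight $(r+1)(d-r)$ to concentrate at $q_j$. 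Feeding this deficit back through the Pl\"ucker count bounds the ramification of $l_C$ at $q$ and produces the inequality.

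The delicate point, and the step I expect to be the main obstacle, is upgrading the total-weight estimate to the stated \emph{per-index} statement and isolating the unique exceptional value of $i$. On $\p^1$ a $g^r_d$ is determined by its ramification data up to $\mathrm{PGL}_2$, and the single exceptional index corresponds to the one section that can simultaneously realise extremal vanishing at $q'$ and at $q$ — the single cross-ratio degree of freedom. Making this precise is exactly the inductive comparison of vanishing sequences of Eisenbud and Harris, so the cleanest route is to carry out the reduction above and then invoke their argument in~\autocite{EH832}; for general $q'$ the bookkeeping is cleanest, while an arbitrary $q'$ is absorbed by charging its own ramification weight to the same Pl\"ucker budget.
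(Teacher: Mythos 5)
First, a point about the comparison itself: the paper contains no proof of Lemma \ref{lemma:EH} --- it is imported verbatim, with attribution, as Corollary 1.6 of \autocite{EH832} --- so your attempt can only be measured against the Eisenbud--Harris argument. Your opening reduction is sound and is indeed the right first move: compatibility at $q$ gives $a_i(l_{C'},q)\geq d-a_{r-i}(l_C,q)$ (the paper's (\ref{eq:vanish}) is stated with $d-r$, which is the ramification-sequence normalisation; for vanishing orders the bound is $d$), so it suffices to prove the $C$-intrinsic claim that $a_i(l_C,q')+a_{r-i}(l_C,q)<d$ for all but at most one $i$. Note that since the crude inequality already points in the useful direction, you do not even need refinedness for this step.

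The genuine gap is in the second half: Pl\"ucker-weight bookkeeping provably cannot deliver the per-index statement, and your own text concedes this ("the delicate point") before resolving it by "invoking their argument in \autocite{EH832}" --- which is circular, since the lemma \emph{is} that cited result. Concretely: if the equality $a_i(l_C,q')+a_{r-i}(l_C,q)=d$ held for two distinct indices, the ramification weights at $q'$ and $q$ would together contribute at least $2(d-r)$, and the tail point $q_j$ at least $r$, for a total of $2d-r$; this sits below the Pl\"ucker budget $(r+1)(d-r)$ whenever $d(r-1)\geq r^2$, i.e.\ in essentially the entire range $r\geq 3$ in which the paper applies the lemma, so no contradiction can come from summing weights --- and the same objection defeats your plan to "charge" a non-general $q'$ to the budget. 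The actual mechanism requires two finer inputs. (i) At $q_j$: on a genus-$1$ curve, Riemann--Roch forces $\alpha_i(l_{E_j},q_j)\leq d-r-1$ for all $i<r$ (if $\alpha_i=d-r$ with $i<r$, then $\dim V(-(d-r+i)q_j)\geq r+1-i$ while $h^0(L(-(d-r+i)q_j))=r-i$), so via compatibility $\alpha_i(l_C,q_j)\geq 1$ for all but at most one $i$ --- the "cusp condition" of \autocite{EH832}; this is where your appeal to (\ref{eq:ramif}) with $g_j=1$ belongs, though generality of $E_j$ is not needed. (ii) On $C\simeq\p^1$: an equality $a_i(l_C,q')+a_{r-i}(l_C,q)=d$ forces a section whose divisor is \emph{entirely} supported on $\{q,q'\}$; two such equalities give, in coordinates with $q=0$ and $q'=\infty$, monomial sections $x^ay^{d-a}$ and $x^by^{d-b}$ in $V$, and the pencil they span contains a member vanishing to order exactly $1$ at $q_j$ while the monomials do not vanish there at all, so $0$ and $1$ both occur in the vanishing sequence of $l_C$ at $q_j$ --- two indices with $\alpha_i(l_C,q_j)=0$, contradicting (i). This monomial/pencil argument, not the total-weight count, is the content of the cited corollary; your "single cross-ratio degree of freedom" intuition gestures at it but is never turned into an argument.
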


\begin{lemma}\label{lemma:1original}
Let $\{\sigma_C \mid C\subseteq Y \text{ irreducible component}\}$ be the set of compatible sections corresponding to the divisor $D+D_1+\ldots+D_k$. If $q\in C$, then $\ord_q(\sigma_C)=0$.
\end{lemma}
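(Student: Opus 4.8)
The strategy is to identify the compatible sections $\sigma_C$ as the restrictions to the components of $Y$ of the tautological section of the line bundle realising the de Jonquières divisor $D+D_1+\ldots+D_k=\sum_{i=1}^k(a_i+1)D_i$, and then to read off from the placement of its zeros that nothing is left over at the nodes.

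First I would record that in Case (ii) the supports of $D_1,\ldots,D_k$, and hence of $\sum_{i=1}^k(a_i+1)D_i$, lie on $Y$ at smooth points disjoint from every node $q$; this is the content of the specialisation in Proposition 2.2 of \autocite{Fa2} together with the flatness $\mathscr{D}_i(0)=D_i$ of the relative divisors. The line bundle $\oo_Y\bigl(\sum_{i=1}^k(a_i+1)D_i\bigr)$ then has multidegree equal, on each component $C\subseteq Y$, to the support degree $\sum_{i=1}^k(a_i+1)d_{i,C}$, and its tautological section restricts on $C$ to a section $\sigma_C$ whose zero scheme is precisely $\sum_{i=1}^k(a_i+1)D_{i,C}$.

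Since each component carries a line bundle whose degree equals exactly the support it carries, $\sigma_C$ exhausts its vanishing on the smooth support points alone; as these avoid the nodes, $\ord_q(\sigma_C)=0$ for every node $q\in C$. In particular, at a node $q=C\cap C'$ the values $\sigma_C(q)$ and $\sigma_{C'}(q)$ are both non-zero and agree, so the local data glue to a single section, confirming that the $\sigma_C$ are genuinely compatible and realise the divisor.

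The delicate point, which I expect to be the main obstacle, is to guarantee that no support is absorbed into a node in the limit --- that is, that the multidegree above really is the support distribution and not one in which some $(a_i+1)D_i$ has slid onto a node. Here the genus--degree coincidence $p_a(Y)=N=\deg(D_1+\ldots+D_k)$ is essential: it forces the $N$ units of support to spread across the $N$ elliptic tails of $Y$ in the unique balanced way, leaving no slack for a node to acquire positive multiplicity. I would make this precise by propagating the vanishing orders inward from the elliptic tails, using the refinedness of $\mathfrak{l}$ and the inequality of Lemma \ref{lemma:EH} at the internal nodes, and checking that the residual order at each node is forced to vanish.
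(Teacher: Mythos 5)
Your proposal misidentifies the objects the lemma is about, and this leads you to aim at a false intermediate claim. The compatible sections $\sigma_C$ are sections of the aspects of the Eisenbud--Harris limit linear series $\mathfrak{l}=\grdop{r+n'}{d+N}$: each aspect has degree $d+N$ concentrated entirely on its own component, so each $\sigma_C$ carries $d+N$ worth of zeros on $C$, most of them sitting at the nodes. They are \emph{not} restrictions of a tautological section of a line bundle $\oo_Y\bigl(\sum_{i=1}^k(a_i+1)D_i\bigr)$ whose multidegree is spread out according to the support; that is the twisted-bundle picture of Section \ref{sec:compactified}, not the limit linear series picture in force here. Consequently your assertions that the zero scheme of $\sigma_C$ is precisely $\sum_i(a_i+1)D_{i,C}$, that $\ord_q(\sigma_C)=0$ at every node, and that $\sigma_C(q)$ and $\sigma_{C'}(q)$ are both nonzero and agree, are false at the internal nodes of $Y$: compatibility forces $\ord_{q'}(\sigma_{C})+\ord_{q'}(\sigma_{C'})\geq d+N$ there, and the actual proof computes, already in the two-component case, $\ord_{q'}(\sigma_{C_1})=\deg D_{C_2}>0$. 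Your closing plan of ``checking that the residual order at each node is forced to vanish'' by propagating vanishing inward from the elliptic tails via Lemma \ref{lemma:EH} is therefore an attempt to prove something false, and the heuristic that $p_a(Y)=N$ forces one unit of support per elliptic tail is neither true (the $D_i$ may specialise anywhere on $Y$) nor used anywhere in the argument.

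What the lemma asserts, and what its application at the point $p=Y\cap\overline{(\widetilde{R}\setminus Y)}$ requires, is that $\sigma_C$ has no zeros at \emph{smooth} points $q$ of $Y$ away from the specialised divisor. The paper's proof is a pure exhaustion-of-degree count, by induction on the number of components of the tree $Y$: since the specialisations satisfy $\sum_C \deg D_C = d+N$, each $\sigma_C$ vanishes on $D_C$ and at the node(s) joining $C$ to the rest of $Y$ with order equal to the total degree of the divisor on the other side, and these contributions already add up to $\deg\mathfrak{l}_C=d+N$; no slack remains, so $\ord_q(\sigma_C)=0$ at every other smooth point. Neither Lemma \ref{lemma:EH} nor refinedness is needed. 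Note also that if you insisted on the spread-multidegree formulation, you would first need precisely this no-excess-vanishing statement in order to divide out the node orders and produce your tautological section, so your reduction is circular as it stands.
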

\begin{proof}
The proof works by induction on the components of $Y$.  By construction, the tree curve $Y$ has at least two components, so the base case is $Y=C_1\cup_{q'} C_2$.  Denote by $D_{C_1}$ and
$D_{C_2}$ the specialisations of the divisor $D+D_1+\ldots+D_k$ on the two components $C_1$ and $C_2$.  Then $\sigma_{C_1}$ vanishes on $D_{C_1}$ and $\ord_{q'}(\sigma_{C_1})=d+N-\deg D_{C_1}=\deg D_{C_2}$ and similarly $\sigma_{C_2}$ must vanish on $D_{C_2}$ and $\ord_{q'}(\sigma_{C_2})=\deg D_{C_1}$.  Hence, if $q\in C_1$ is a smooth point, then $\sigma_{C_1}(q)=0$ and if $q\in C_2$ is a smooth point, then $\sigma_{C_2}(q)=0$ and we are done.

Suppose now that $Y$ has $m$ irreducible components denoted $C_1,\ldots,C_m$ and let $D_{C_1},\ldots,D_{C_m}$ be the specialisations of the divisor $D+D_1+\ldots+D_k$ to each component.  Let $C_m \cap C_{m-1}=q'$.  Then \[\ord_{q'}(\sigma_{C_{m-1}})=d+N-\deg D_{C_1}-\ldots-\deg D_{C_{m-1}}=\deg D_{C_m}.\]  Furthermore, 
\[\ord_{q'}(\sigma_{C_{m}})=d+N-\deg C_m.\]  Thus, if $q\in Y$ is a smooth point belonging to $C_m$, then $\sigma_{C_m}(q)=0$.  If $q\in Y$ belongs to any of the components of the subcurve $C_1\cup\ldots\cup C_{m-1}$, then $\sigma_{C_j}(q)=0$ (with $j=1,\ldots,m-1$), where we used the induction hypothesis and the fact that $D_{C_1} + \ldots + D_{C_{m-1}} + (\deg C_{m})q'$ is a divisor of degree $d+N$ on the subcurve $C_1\cup\ldots\cup C_{m-1}$.
\end{proof}

Let $C\subset Z$ be the irreducible component meeting $Y$ at $p$.  Denote by $C'$ the component of $Y$ containing $p$.  Suppose first that $C$ contains at least one of the points $q_j$ of attachment of the elliptic tails.  Let $p'\in C$ be a general smooth point, which therefore has vanishing sequence 
\[  a_i((\grdop{r+n'}{d+N})_C,p') = (0,1,2,3,\ldots,r+n'). \]
By Lemma \ref{lemma:EH} with $q=p$ and $q'=p'$, the vanishing sequence at $p$ is
\[a_i((\grdop{r+n'}{d+N})_{C'},p) \geq (0,2,3,4,\ldots,r+n'+1).\]
By a similar argument,
\[a_i((\grdop{r}{d})_{C'},p) \geq (0,2,3,4,\ldots,r+1).\] 
Combining this with Lemma \ref{lemma:1original}, we get the following ramification sequence for $\grdop{n'-1}{d}$:
\[ \alpha_i((\grdop{n'-1}{d})_{C'},p) \geq (1,1,\ldots,1). \]
In fact we obtain a limit linear series $\grdop{n'-1}{d}$ on $Y$ with ramification
\begin{equation}\label{eq:ramifsequence}
\alpha_i((\grdop{n'-1}{d})_{Y},p) \geq (1,1,\ldots,1).
\end{equation}  
We check a necessary condition for such a limit series to exist (cf.~Theorem 1.1 of \autocite{EH87}): 

\begin{equation}\label{eq:ramifcond3}
\sum_{i=0}^{n'-1}\widetilde{\alpha}_i + n'(N-d+n'-1)\leq N.
\end{equation} 
Since we assumed $n'\geq N-g+d-r$ and using moreover the inequality (\ref{eq:ramifsequence}) we obtain that
\[ \sum_{i=0}^{n'-1}\widetilde{\alpha}_i + n'(N-d+n'-1) \geq (N-g+d-r)(2N-g-r). \]
Denoting by $s:=g-d+r$ and using $N > d-r$, we reformulate the necessary condition (\ref{eq:ramifcond3}) as
\[ (N-s)(N-s-r)< N \]
which is equivalent to the quadratic inequality
\[ N^2 - (2s+r+1)N + s(s+1) < 0. \]
This implies that the solution $N$ must be contained in the interval
\[ \left(\frac{2s+r+1-\sqrt{(2s+r+1)^2-4s(s+r)}}{2},\frac{2s+r+1+\sqrt{(2s+r+1)^2-4s(s+r)}}{2}\right). \]
We now show that for $s\geq 2$ and $r\geq 3$
\begin{equation}\label{eq:roots}
\frac{2s+r+1+\sqrt{(2s+r+1)^2-4s(s+r)}}{2}<d-r+1,
\end{equation}
contradicting thus the hypothesis $N-d+r\geq 1$.
To do this, first note that a simple calculation yields
\[ g\geq (r+1)s \geq 2s+r+1 \]
for $s\geq 2$ and $r\geq 3$ which in turn yields
\[ 2s+r+1\leq g-((r+1)s-2s+r+1)=g-s(r-1)+r+1. \]
Another simple calculation gives, for $r\geq 3$ and $s\geq 2$:
\[\sqrt{(2s+r+1)^2-4s(s+r)} \leq (2s+r+1)-4.\]
Putting it all together, we get a sufficient condition for the inequality (\ref{eq:roots}) to be satisfied, namely:
\[ \frac{2g-2s(r-1)+2(r+1)-4}{2} < d-r+1 \]
which is equivalent to
\[ (2-r)(s-1) < 0. \]
This is clearly satisfied for $r\geq 3$ and $s\geq 2$ which means (\ref{eq:roots}) is also satisfied for these value ranges of $r$ and $s$.

\item $N\geq g$. \\
In this case $Y=\widetilde{R}$ and we check the necessary condition for the existence of a linear series $\grdop{n'-1}{d}$ on the tree curve $Y$ without specified ramification at a point (also Theorem 1.1 of \autocite{EH87}):
\begin{equation}\label{eq:ramifcond2}
n'(g-d+n'-1)\leq g.
\end{equation} 
By our assumptions, $n'\geq N-g+d-r\geq d-r$ and we therefore have
\[ \sum_{i=0}^{n'-1}\widetilde{\alpha}_i + n'(N-d+n'-1) \geq (d-r)(g-r-1). \]
Thus a necessary condition for (\ref{eq:ramifcond2}) is that
\[ (d-r)(g-r-1)\leq g, \]
which is equivalent to
\[ g\leq \frac{(r+1)(d-r)}{d-r-1}. \]
However, we also know that $s=g-d+r\geq 2$ and $g\leq\frac{r+1}{r}(d-r)$, 
which immediately give $d\geq 3r$.  This in turn yields
\[g\leq \frac{(r+1)(d-r)}{d-r-1}\leq \frac{r}{2r-1}(d-r)\leq d-r,\]
which contradicts the assumption that $s=g-d+r\geq 2$.
\end{enumerate}

\section{Non-existence statement for non-complete linear series}\label{sec:noncomplete}
In this section we prove Theorem \ref{cor:nonex} which states that, for a smooth general curve of genus $g$, if $n-d+r<0$, the general linear series $\grd$ with $g-d+r<0$ does not admit de Jonqui\`eres divisors of length $n$ of the type
\[ a_1 p_1 + \ldots + a_n p_n, \]
where the points $p_i$ in the support are distinct.  Recall that in this case, we need only one partition $\mu=(a_1,\ldots,a_n)$ of $d$ and we denote the space of de Jonqui\`eres divisors by $DJ^{r,d}_n(\mu,C,l)$.
We proceed by induction.  The base case is given by the non-existence statement in the case $n-d+r<0$ and $n<g$ shown in Lemma \ref{lemma:nonspecial}.  In the induction step we prove non-existence for $n\geq g$.

Consider the following quasi-stable curve $Y$ of genus $g\geq 4$ with $n\geq g$ marked points consisting of a smooth general curve $C$ of genus $g-1$ and a rational bridge with $n+1$ rational components $\gamma_j$, for $j=1,\ldots,n+1$. 
Since the curve is quasi-stable, at most one of the components of the rational chain is exceptional (i.e. it contains no marks).  In our case, since we have $n$ marks, there must be one such component which we denote by $\gamma_{j'}$, while each of the other rational components $\gamma_j$ contains one of the marked points $p_i$.
Let $C\cap \gamma_1=q_1$, $C\cap\gamma_{n+1}=q_{n+2}$, and $\gamma_j\cap \gamma_{j+1}=q_{j+1}$ for $j=1,\ldots,n+1$.
 The curve $Y$ is equipped with a linear series $l=\grd=(L,V)$ with $g-d+r<0$ corresponding to a line bundle $L$ with $h^0(Y,L)>r+1$.  The bundle $L$ has balanced multidegree $\underline{d}$, meaning that $\deg L_C = d-1$ and $\deg L_{\gamma_j}=0$ for all $j\neq j'$ and $\deg L_{\gamma_{j'}}=1$.  An easy Mayer-Vietoris sequence calculation yields that $C$ is also equipped with a non-complete linear series $l_C=\grdop{r}{d-1}$.  
 
This configuration gives a de Jonquières divisor on $Y$ corresponding to $l=(L,V)$ if there exists a twist $T$ satisfying the following system of linear equations:
 \begin{align*}
 	&T(q_1,C) + T(q_{n+2},C) = d-1 \\
 	&T(q_j,\gamma_j) + T(q_{j+1},\gamma_j) + \sum_{p_i\in\gamma_{j}}a_i = 0 \text{ for all }j\neq j' \\
 	&T(q_{j'},\gamma_{j'}) + T(q_{j'+1},\gamma_{j'}) = 1.
 \end{align*} 
 
Note that at least one of the terms $T(q_1,C)$ and $T(q_{n+2},C)$ must be non-zero.
There are therefore two possibilities for solutions of this system:
\begin{enumerate}
	\item Both $T(q_1,C)$ and $T(q_{n+1},C)$ are non-zero.  In this case we have a de Jonquières divisor $T(q_1,C)q_1 + T(q_{n+2},C)q_{n+2}$ on $C$ of length $2$ corresponding to $l_{C}$.  Note that since $2<g-1$ and $2-(d-1)+r=3-d+r<n-d+r<0$, the induction hypothesis yields that $l_C$ admits no such de Jonqui\`eres divisors. 
	\item Only one of the two terms is non-zero.  We then have a de Jonqui\`eres divisor of length 1 corresponding to $l_C$.  Since $1<g-1$ and $1-(d-1)+r<n-d+r<0$, the induction hypothesis yields that $l_C$ does not admit such de Jonqui\`eres divisors. 
\end{enumerate}
 
Hence $l$ does not admit any de Jonqui\`eres divisors on $Y$ of length $n\geq g$.  We now explain how to conclude the non-existence statement for a general smooth curve with a general linear series of type $\grd$.

First note that $Y$ is embedded in $\p^r$ by $l$ and using the methods of Hartshorne-Hirschowitz and Sernesi \autocite{Se} (for the precise details, see for example Lemma 1.5 of \cite{FAO}) one shows that it is flatly smoothable to a general curve of genus $g$ and degree $d$ in $\p^r$.  Thus we have a family $\pi:\mathscr{X}\rightarrow\Delta$ of curves of genus $g$ with central fibre $\mathscr{X}_0=Y$.  The family is equipped with a line bundle $\mathscr{L}$ of relative degree $d$ and such that $h^0(\mathscr{X}_t,\mathscr{L}_t)>r+1$ for all $t\in\Delta$.  Thus the family $(\pi:\mathscr{X}\rightarrow\Delta,p_i:\Delta\rightarrow\mathscr{X},\mathscr{L})\notin\mathcal{DJ}^{r,d}_{g,n,\mu}(\Delta)$.  Otherwise, if the smooth fibres of $\mathscr{X}\rightarrow\Delta$ admitted de Jonqui\`eres divisors, then by Proposition \ref{prop:closed}, the central fibre would as well.  However, we have just proven this not to be the case, which concludes the induction step.

\begin{rem} $Y$ is a quasi-stable curve obtained via semi-stable reduction from the stable curve $X$ of genus $g$ with no marked points and just one self-intersection node.  Since $X$ is $d$-general (see Definition 4.13 of \autocite{Ca2}), it follows that locally around $X$ the forgetful morphism $\Psi_{d,g,0}:\overline{P}_{d,g,0}\rightarrow\Delta$  (with $\Delta\subset\overline{{\mathcal{M}}}_{g}$) is proper.  Moreover, if $\Psi_{d,g,0}$ is proper, then so are $\Psi_{d,g,n}:\overline{P}_{d,g,n}\rightarrow\Delta$ (with $\Delta\in\overline{\mathcal{M}}_{g,n}$ - see for example the discussion in Sections 7 and 8 of \cite{Me}) and $\mathcal{DJ}^{r,d}_{g,n,\mu}(\Delta)\rightarrow\Delta$.  
\end{rem}

	\section{Expected dimension for de Jonqui\`eres divisors with negative terms}\label{sec:lorentz} 
\indent  It is also worthwhile to study de Jonqui\`eres divisors whose partition $\mu$ of $d$ contains negative terms. In fact, in Section \ref{sec:compactified} we saw that negative coefficients occur naturally when considering  de Jonqui\`eres divisors on nodal stable curves, as the twists $T$ may be negative.  For simplicity of notation, we consider only de Jonqui\`eres divisors with distinct points in the support.

\begin{defi}\label{def:lorentz}
Fix a curve $C$ equipped with a linear series $l\in\Grd$ and let 
\[\mu=(a_1,\ldots,a_{n_1},-b_1,\ldots,-b_{n_2})\] 
be a partition of $d$ of length $n$, where $a_i,b_i$ are positive integers satisfying $\sum_{i=1}^{n_1}a_i - \sum_{i=1}^{n_2}b_i=d$ and $n_1,n_2$ are fixed positive integers with $n_1+n_2=n$.
We define the space $DJ_{n_1,n_2}^{r,d}(\mu,C,l)$ of de Jonquières divisors with $n_1$ positive and $n_2$ negative terms corresponding to the linear series $l$ on the curve $C$ by the rule
\[ \sum_{i=1}^{n_1}a_i p_i - \sum_{i=1}^{n_2}b_i q_i \in DJ_{n_1,n_2}^{r,d}(\mu,C,l)\]
if and only if
\[ \sum_{i=1}^{n_1}a_i p_i \in DJ_{n_1}^{r',d'}(\mu',C,l'), \]
where $p_i,q_i\in C$, $\mu'=(a_1,\ldots,a_{n_1})$ is a positive partition of $d'=\sum_{i=1}^{n_1}a_i=d+\sum_{i=1}^{n_2}b_i$, and $l'$ is the linear series of type $\grdop{r'}{d'}$ given by $l'=l+\sum_{i=1}^{n_2}b_i q_i$.
\end{defi}

\begin{thm}\label{thm:lorentz}
Fix a general curve $C$ of genus $g$ with a general linear series $l=(L,V)\in\Grd$, and let $\mu=(a_1,\ldots,a_{n_1},-b_1,\ldots,-b_{n_2})$ be a partition of $d$ of length $n$, where $a_i,b_i$ are positive integers and $n=n_1+n_2$.  
Assume that the points $q_i$ are general and $l'$ is complete.   If $n_1-d'+r'\geq 0$, then 
 the space $DJ^{r,d}_n(\mu,C,l)$  is of expected dimension $n-d'+r'$.
\end{thm}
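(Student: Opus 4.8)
The plan is to read the dimension off Definition~\ref{def:lorentz} directly, by fibring the space over the choice of the negative points $q_1,\ldots,q_{n_2}$ and invoking the dimension theorem for the auxiliary complete series $l'$ on each fibre. Concretely, I would introduce the forgetful morphism
\[ \Pi\colon DJ^{r,d}_{n}(\mu,C,l)\longrightarrow C^{n_2},\qquad \sum_{i=1}^{n_1}a_ip_i-\sum_{i=1}^{n_2}b_iq_i\longmapsto(q_1,\ldots,q_{n_2}), \]
defined over the open locus of $n_2$ distinct general points. By the defining rule of the space, the fibre of $\Pi$ over a point $(q_1,\ldots,q_{n_2})$ is exactly $DJ^{r',d'}_{n_1}(\mu',C,l')$, where $d'=d+\sum_{i=1}^{n_2}b_i$ and $l'=l+\sum_{i=1}^{n_2}b_iq_i$ is the complete series attached to $L'=L\otimes\oo_C(\sum_{i=1}^{n_2}b_iq_i)$. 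The problem thus reduces to computing the dimension of these fibres and adding the $n_2$ dimensions coming from the base.

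The first substantive step is to verify that, for general $q_i$, the pair $(C,l')$ is a general point of $\mathcal{H}_{d',g,r'}$, so that Theorem~\ref{dimension} applies on each fibre. The line bundle $L'$ is obtained from $L$ by twisting with a general effective divisor of degree $d'-d$; since translation by a fixed divisor class is an isomorphism of the Picard varieties, $L'$ is a general element of $W^{r'}_{d'}(C)$, where $r'=h^0(C,L')-1$ denotes the generic value of the number of sections. As $C$ is general and $l'$ is assumed complete, $(C,l')$ is then a general complete linear series of type $\grdop{r'}{d'}$.

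Granting this, Theorem~\ref{dimension} and the hypothesis $n_1-d'+r'\geq 0$ show that every fibre of $\Pi$ over a general point has dimension exactly $n_1-d'+r'$, and each such fibre is non-empty by the class computation of Section~\ref{sec:existencedej}, so $\Pi$ is dominant. Since the base has dimension $n_2$ and the general fibres are equidimensional of dimension $n_1-d'+r'$, the theorem on the dimension of fibres of a morphism yields
\[ \dim DJ^{r,d}_{n}(\mu,C,l)=n_2+(n_1-d'+r')=n-d'+r', \]
which is the asserted expected dimension.

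I expect the genericity transfer of the second paragraph to be the main obstacle: one must ensure that twisting the general series $l$ by general points produces a series $l'$ that lies in the Brill--Noether general stratum on which Theorem~\ref{dimension} holds, rather than in a deeper locus of $W^{r'}_{d'}(C)$ where the dimension of the de Jonqui\`eres locus could jump. The essential point is that for a general curve the addition map sending $(M,(x_i))$ to $M\otimes\oo_C(\sum_{i=1}^{n_2}b_ix_i)$ dominates the relevant Brill--Noether locus $W^{r'}_{d'}(C)$; this I would settle by a dimension count together with the irreducibility of $W^{r'}_{d'}(C)$ for a general curve, after which a general point of the image is automatically a general point of $W^{r'}_{d'}(C)$ and the argument closes.
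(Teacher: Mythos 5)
Your fibration bookkeeping (fibres $DJ^{r',d'}_{n_1}(\mu',C,l')$ of dimension $n_1-d'+r'$ over an $n_2$-dimensional base of choices of the $q_i$, summing to $n-d'+r'$) agrees with the paper's final count, but the step you flag as the main obstacle is a genuine gap, and your proposed fix does not close it. Translation by the fixed divisor $\sum_{i=1}^{n_2}b_iq_i$ shows only that $L'=L\bigl(\sum_{i=1}^{n_2}b_iq_i\bigr)$ is general in $\pic^{d'}(C)$; that suffices precisely when $L'$ is non-special, but in the special range it does not place $L'$ generally in $W^{r'}_{d'}(C)$. Indeed, if $h^1(L)>0$ then for a general point $q$ one has $h^0(K_C-L-q)=h^0(K_C-L)-1$, so twisting by a general point decreases $h^1$ rather than increasing $h^0$; hence as long as $\sum_i b_i\leq h^1(L)$ we get $h^0(L')=h^0(L)$, every section of $L'$ comes from a section of $L$, and the complete series $l'=|L'|$ contains $\sum_i b_iq_i$ in its base locus. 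Such an $l'$ is never a general point of $\mathcal{H}_{d',g,r'}$, so Theorem \ref{dimension} cannot be invoked as a black box on the fibres. Your suggested repair—that the addition map $(M,(x_i))\mapsto M\otimes\oo_C\bigl(\sum_i b_ix_i\bigr)$ dominates $W^{r'}_{d'}(C)$—is refuted by a dimension count: with $r'=r$ the source has dimension at most $\rho(g,r,d)+n_2$, whereas
\[ \dim W^{r}_{d'}(C)=\rho(g,r,d')=\rho(g,r,d)+(r+1)\sum_{i=1}^{n_2}b_i, \]
and $(r+1)\sum_i b_i> n_2$ whenever $r\geq 1$ (since $b_i\geq 1$). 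So the image is a proper subvariety of $W^{r}_{d'}(C)$, and a general bundle there is not of the form $L\bigl(\sum_i b_iq_i\bigr)$; irreducibility of $W^{r}_{d'}(C)$ does not rescue a map that fails to dominate.

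The paper avoids exactly this issue by never claiming that $(C,l')$ is Brill--Noether general. It first records $h^0(L')$ case by case according to $d'$ versus $2g-2$ (the completeness hypothesis on $l'$ is what pins down $r'$), obtains the lower bound $n_1-d'+r'$ from the degeneracy-locus description of Section \ref{sec:deglocus} with $L'$ substituted for $L$, and then obtains the matching upper bound by re-running the entire limit-linear-series induction of the proof of Theorem \ref{dimension} with $L'$ in place of $L$; the degeneration argument applies to the specific twisted series at hand and needs no genericity of $l'$ inside $G^{r'}_{d'}(C)$. Afterwards the $n_2$ parameters from the $q_i$ are added, just as in your last step. To make your argument correct you would have to do the same: either redo the degeneration for $l'$ directly, or prove a dimension theorem for de Jonqui\`eres divisors of complete series with a prescribed base divisor—neither of which follows from Theorem \ref{dimension} as stated, except in the non-special cases already handled without degeneration in Section \ref{sec:nonspecial}.
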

\begin{proof}
Set $L'=L(\sum_{i=1}^{n_2}b_i q_i)$.  We first show that $\dim DJ^{r,d}_{n_1}(\mu',C,l')\geq n_1-d'+r'$.  
We distinguish a few cases.  
\begin{itemize}
	\item If $d'=2g-2$ and $L'=K_C$, then $h^0(L') = g$.
	\item If $d'=2g-2$, but $L'\neq K_C$, then $h^0(L') = g-1$.
	\item If $d' > 2g-2$, then $h^0(L') = d'-g+1$.
	\item If $d'<2g-2$, then $h^0(L') = r+\sum_{i=1}^{n_2}b_i+1$, by the generality of the points $q_i$.
\end{itemize}
Note that apart from the case $d'<2g-2$, the assumption that the points $q_i$ are general was not used.
In all cases however, $h^0\bigl(L'|_{\sum_{i=1}^{n_1} a_i p_i}\bigr)=\sum_{i=1}^{n_1}a_i=d'$.
With this in mind, we can describe the space $DJ^{r',d'}_{n_1}(\mu',C,l')$ as the locus in $C_{d'}$ where the vector bundle map $\Phi$ (constructed as in Section \ref{sec:deglocus}, but substituting $L'$ for $L$) has rank at most $h^0(L')-1=r'$.  Hence the lower bound for the dimension of $DJ^{r',d'}_{n_1}(\mu',C,l')$ is given by
\begin{itemize}
	\item $n_1-(h^0(L')-r')(d'-r')=n_1-d'+r'=n_1-g+1$ if $d'= 2g-2$ and $L' = K_C$,
	\item $n_1-(h^0(L')-r')(d'-r')=n_1-g$ if $d'\geq 2g-2$ and $L'\neq K_C$,
	\item $n_1-(h^0(L')-r')(d'-r')=n_1-d+r$ if $d'<2g-2$.
\end{itemize}
The fact that
\[ \dim DJ^{r',d'}_{n_1}(\mu',C,l') = n_1 - d' + r' \]
follows as in the case of effective de Jonqui\`eres divisors, by replacing the occurrences of $L$ by $L'$ in the proof of Theorem \ref{dimension}.  Finally, including the points $q_i$ in the dimension count, we get that the dimension of $DJ_{n_1,n_2}^{r,d}(\mu,C,l)$ is indeed $n-d'+r'$.
\end{proof}

\printbibliography[]

\end{document}